    \title{Functoriality of Coulomb Branches}
    \author{Tom Gannon and Ben Webster}
    \newcommand{\C}{\mathbb{C}} 
    \newcommand{\Z}{\mathbb{Z}} 
    \newcommand{\LSL}{\mathfrak{sl}}
    \newcommand{\SL}{\operatorname{SL}}
     \newcommand{\GL}{\operatorname{GL}}   
    \newcommand{\PGL}{\operatorname{PGL}}
    \newcommand{\LGL}{\mathfrak{gl}}
    \newcommand{\LG}{\mathfrak{g}}
    \newcommand{\LGd}{\mathfrak{g}^*}
    \newcommand{\LtG}{\mathfrak{\tilde{g}}}
    \newcommand{\LtGc}{\mathfrak{\tilde{g}}^{\circ}}
    \newcommand{\LH}{\mathfrak{h}}
    \newcommand{\LT}{\mathfrak{t}}
    \newcommand{\LF}{\mathfrak{f}}
    \newcommand{\LtT}{\mathfrak{\tilde t}}
    \newcommand{\LtTc}{\mathfrak{\tilde t}^{\circ}}
    \newcommand{\Kost}[1]{\mathbb{K}_{#1}}
    \newcommand{\End}{\operatorname{End}} 
    \newcommand{\Hom}{\operatorname{Hom}}
    \renewcommand{\Hom}{\operatorname{Hom}} 
    \renewcommand{\End}{\operatorname{End}} 
    \newcommand{\QCoh}{\operatorname{QCoh}}
    \renewcommand{\\}{\backslash}
    \renewcommand{\subset}{\subseteq}
    \theoremstyle{definition}
    \newtheorem{Theorem}{Theorem}[section]
    \newtheorem{Conjecture}[Theorem]{Conjecture}
    \newtheorem{Corollary}[Theorem]{Corollary}
    \newtheorem{Definition}[Theorem]{Definition}
    \newtheorem{Proposition}[Theorem]{Proposition}
    \newtheorem{Remark}[Theorem]{Remark}
    \newtheorem{Lemma}[Theorem]{Lemma}
    \newcommand{\Gm}{\mathbb{G}_m}
     \newcommand{\tG}{\tilde{G}}
     \newcommand{\tT}{\tilde{T}}
\newcommand{\birat}{\varpi}
\numberwithin{equation}{section}
\newcommand{\Spec}{\mathrm{Spec}}   
\newcommand{\tM}{\widetilde{\mathcal{M}}}
\newcommand{\tH}{\tilde{H}}
\newcommand{\LtH}{\tilde{\mathfrak{h}}}
\newcommand{\tcH}{{\mathfrak{c}}_{\tH}}
\newcommand{\tcG}{{\mathfrak{c}}_{\tG}}
\newcommand{\AGN}{\mathcal{A}_{G, \mathbf{N}}}
\newcommand{\sphericalHeckeCatForTHISGROUP}[1]{\overline{\mathcal{H}_{#1}}}
\newcommand{\tc}[1]{\tilde{\mathfrak{c}}_{#1}}
\newcommand{\fc}[1]{{\mathfrak{c}}_{#1}}
\newcommand{\tcGc}{\tcG^{\circ}}
\newcommand{\tcHc}{\tcH^{\circ}}
\newcommand{\bfN}{\mathbf{N}}
\newcommand{\tA}{\tilde{\mathcal{A}}}
\newcommand{\cH}{\mathfrak{c}_H}
\newcommand{\cG}{\mathfrak{c}_G}
\newcommand{\mapFromHToG}{\varsigma}
\newcommand{\mapFromBaseChangedCoulombBranchforGtoCoulombBranchforH}{\underline{\mapFromHToG}}
\newcommand{\M}{\mathcal{M}}
\newcommand{\Gr}{\mathrm{Gr}}
\newcommand{\semisimpleComplexesOnGr}{\mathcal{I}}
\newcommand{\LGc}{\Check{\LG}}
\newcommand{\LGcd}{\Check{\LG}^*}
\newcommand{\Sym}{\mathrm{Sym}}
\newcommand{\aff}{\mathbb{C}}
\begin{document}

 \newcommand{\Ben}[1]{%
     \ifdraft{\par\noindent
     {\color{red}\textbf{BW:} #1}%
     \par}
 }
 \newcommand{\Tom}[1]{
     \ifdraft{\par\noindent
          {\color{blue}
              \textbf{TG:} #1
          }}
 \par    
 }

\maketitle
\begin{abstract}
We prove that the affine closure of the cotangent bundle of the parabolic base affine space for $\mathrm{GL}_n$ or $\mathrm{SL}_n$ is a Coulomb branch, which confirms a conjecture of Bourget-Dancer-Grimminger-Hanany-Zhong. In particular, we show that the algebra of functions on the cotangent bundle of the parabolic base affine space of $\mathrm{GL}_n$ or $\mathrm{SL}_n$ is finitely generated. 

We prove this by showing that, if we are given a map $H \to G$ of complex reductive groups and a representation of $G$ satisfying an assumption we call gluable, then the Coulomb branch for the induced representation of $H$ is obtained from the corresponding Coulomb branch for $G$ by a certain Hamiltonian reduction procedure. In particular, we show that the Coulomb branch associated to any quiver with no loops can be obtained from Coulomb branches associated to quivers with exactly two vertices using this procedure.
\end{abstract}

\section{Introduction}
The Coulomb branch $\M_C(G, \bfN)$ associated to a (connected complex) reductive group $G$ and a representation $\bfN$ was defined precisely by Braverman-Finkelberg-Nakajima in \cite{BravermanFinkelbergNakajimaRingObjectsIntheEquivariantDerivedSatakeCategoryArisingFromCoulombBranches}. Our first main result, which we state precisely in \cref{Intro Statement of Functoriality Theorem}, informally states that, given a map of reductive groups $\mapFromHToG: H \to G$, the variety $\M_C(H, \bfN)$ can be obtained from $\M_C(G, \bfN)$ in a systematic way assuming that $\mapFromHToG$ satisfies a certain assumption which we call \textit{gluable}, defined precisely in \cref{def:gluable}. We use this to derive our other two main results: 

\begin{enumerate}
    \item A proof of the fact that the Coulomb branch of any \textit{quiver gauge theory} corresponding to a quiver with no loops is entirely determined by Coulomb branches of quivers with exactly two vertices, discussed in more detail in \cref{Dismemberments of Quiver Gauge Theories Subsection} and
    \item A proof of a conjecture of Bourget-Dancer-Grimminger-Hanany-Zhong \cite{BourgetDancerGrimmingerHananyZhongPartialImplosionsandQuivers} that certain varieties which appear naturally in the context of representation theory and holomorphic symplectic geometry known as \textit{partial implosions} for a reductive group $G$ are Coulomb branches when $G = \GL_n$ or $G = \SL_n$, see \cref{Corollary for Partial Implosion}. 
\end{enumerate}

\subsection{Functoriality of Coulomb branches}\label{Intro Functoriality of Coulomb Branches Subsection} We now state our first main result precisely. By construction (see, for example, \cite[Section 3(vi)]{BravermanFinkelbergNakajimaTowardsaMathematicalDefinitionOfCoulombBranchesOf3dNEquals4GaugeTheoriesII}), the Coulomb branch $\M_C(G, \bfN)$ admits a canonical map to the affine scheme $\LG\sslash G$ whose ring of functions is the $G$-equivariant functions on $\LG := \mathrm{Lie}(G)$ and, furthermore, under this map, $\M_C(G, 0)$ acquires the structure of a group scheme over $\LG\sslash G$, see, for example, \cite[Section 2.6]{YunZhuIntegralHomologyofLoopGroupsviaLanglandsDualGroups}. We also recall a result of Teleman \cite{TelemanCoulombBranchesforQuaternionicRepresentations}, which we reprove in \cref{Action for General Coulomb Branches}, which gives an action of the group $\mathcal{M}_C(G, 0)$ on $\mathcal{M}_C(G, \bfN)$ as schemes over $\LG\sslash G$. 

As above, we fix a map $\mapFromHToG: H \to G$ of reductive groups. The functoriality of equivariant homology gives a map \begin{equation}\label{Group Map for Change of Groups Coulomb Branch}\M_C(G, 0) \times_{\LG\sslash G} \LH\sslash H \to \mathcal{M}_C(H, 0)\end{equation} of group schemes over $\LH\sslash H$. In particular, there is a right action of $\M_C(G, 0)$ on $\M_C(H, 0)$ and so we may define the balanced product \[\M_C(H, 0) \times_{\LG\sslash G}^{\M_C(G, 0)} \M_C(G, \bfN)\] as the affine scheme whose ring of functions is the subring of functions on $\M_C(H, 0) \times_{\LG\sslash G} \M_C(G, \bfN)$ such that the pullbacks by the maps \[\M_C(H, 0) \times_{\LG\sslash G} \M_C(G, 0) \times_{\LG\sslash G} \M_C(G, \bfN) \rightrightarrows \M_C(H, 0) \times_{\LG\sslash G} \M_C(G, \bfN)\] induced by the above actions agree. Our first main result states that this balanced product gives the Coulomb branch for the restriction of the representation $\bfN$ to $H$ under a mild technical hypothesis on the map $\mapFromHToG$ which we call \textit{gluable} (\cref{def:gluable}):

\begin{Theorem}\label{Intro Statement of Functoriality Theorem}
If the map $\mapFromHToG$ is gluable for the representation $\bfN$, there is a canonical $\M_C(H, 0)$-equivariant isomorphism \begin{equation}\label{Intro Isomorphism In First Theorem}\M_C(H, 0) \times_{\LG\sslash G}^{\M_C(G, 0)} \M_C(G, \bfN) \xrightarrow{\sim} \M_C(H, \bfN)\end{equation} of affine varieties over $\LH\sslash H$.
\end{Theorem}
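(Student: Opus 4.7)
The plan is to build the comparison morphism from the functoriality of equivariant homology, check that it is an isomorphism over a generic open locus via abelianization, and extend across the remaining codimension-one loci using the gluable hypothesis. The main obstacle will be precisely this codimension-one check, which is what the gluable condition is designed to control.

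\textbf{Construction of the map.} The homomorphism $\mapFromHToG:H\to G$ induces a morphism of BFN variety-of-triples $\mathcal{R}_{H,\bfN}\to\mathcal{R}_{G,\bfN}$, since an $H$-bundle equipped with a section of its associated $\bfN$-bundle extends by change-of-structure-group to the analogous datum for $G$. Proper pushforward on $H$-arc-group equivariant Borel--Moore homology, combined with the restriction-of-equivariance map from the $G$-arc-group, produces a natural ring map
\[
\O(\M_C(G,\bfN))\otimes_{\O(\cG)}\O(\cH)\longrightarrow \O(\M_C(H,\bfN)).
\]
Together with the group-scheme map \eqref{Group Map for Change of Groups Coulomb Branch} and the Teleman action recalled in the introduction, one verifies that this map factors through the subalgebra of $\M_C(G,0)$-balanced functions on $\M_C(H,0)\times_{\cG}\M_C(G,\bfN)$. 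Dualizing gives the canonical morphism of \eqref{Intro Isomorphism In First Theorem}, and $\M_C(H,0)$-equivariance is automatic from the construction.

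\textbf{Check on a generic open locus.} Let $U\subset\cH$ denote the preimage of the regular semisimple locus in $\cG$. Over $U$, both $\M_C(H,\bfN)$ and the balanced product appearing on the left of \eqref{Intro Isomorphism In First Theorem} admit explicit abelianized presentations in terms of the maximal torus of $H$ and the weights of $\bfN$, via the BFN Cartan subalgebra construction. A direct comparison of these presentations shows the two agree, so the comparison morphism is an isomorphism over $U$.

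\textbf{Extension and the role of gluability.} Both Coulomb branches are normal affine varieties by BFN, and normality for the left-hand side of \eqref{Intro Isomorphism In First Theorem} is inherited from $\M_C(G,\bfN)$ and $\M_C(H,0)$. It therefore suffices to prove the comparison is an isomorphism over the preimage of a subset of $\cH$ whose complement has codimension at least two. The complement of $U$ in $\cH$ is a union of root hyperplanes, so what remains is the generic point of each root hyperplane for $H$ that is not pulled back from a root hyperplane for $G$; the gluable assumption \cref{def:gluable} should be precisely what guarantees the local comparison along each such hyperplane. Carrying out this codimension-one verification by reducing to an explicit calculation on a transverse slice, controlled by the gluable condition, will be the heart of the proof.
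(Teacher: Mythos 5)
Your overall architecture --- compare generically, then extend across codimension one --- is genuinely different from the paper's, but each of its steps has a problem serious enough that the argument does not go through as written. Start with the comparison morphism. Pushforward in Borel--Moore homology along the closed embedding $\mathcal{R}_{H,\bfN}\to\mathcal{R}_{G,\bfN}$ produces, if anything, a map $\O(\M_C(H,\bfN))\to\O(\M_C(G,\bfN))\otimes_{\O(\cG)}\O(\cH)$, the opposite of the arrow you wrote; and for $\bfN\neq 0$ it is not a ring homomorphism for the BFN convolution products, since those are built from refined Gysin maps whose excess bundles differ for $H$ and $G$ (this is exactly why even the abelianization map of \cite[Lemma 5.17]{BravermanFinkelbergNakajimaTowardsaMathematicalDefinitionOfCoulombBranchesOf3dNEquals4GaugeTheoriesII} is only birational). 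If such a ring map existed unconditionally, the morphism in \cref{Intro Isomorphism In First Theorem} would exist with no hypothesis on $\mapFromHToG$ at all. The paper never pushes forward on the $\bfN\neq 0$ BFN spaces: it first establishes that both $\M_C(G,\bfN)_{\cH}$ and $\M_C(H,\bfN)$ are pushouts of two copies of the corresponding $\bfN=0$ Coulomb branches glued along an open locus by the rational automorphism $\birat_{\bfN}$ (\cref{Functions on Massive Coulomb Branch as Coproduct of Open Embeddings} and its base-changed refinement \cref{Base Change of Massive Coulomb Branch is Coulomb Branch of Massive Base Changes}), and only then defines the map by gluing two copies of the $\bfN=0$ functoriality map $\M_C(G,0)_{\cH}\to\M_C(H,0)$; the isomorphism then reduces formally to the balanced product of a group scheme with itself.

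Your extension step also rests on two unproved claims. Normality of the left-hand side of \cref{Intro Isomorphism In First Theorem} does not follow formally from normality of the factors: the fiber product $\M_C(H,0)\times_{\cG}\M_C(G,\bfN)$ need not be normal (the fibers of $\M_C(G,\bfN)\to\cG$ are not normal in general, so Serre's criterion for the flat projection does not apply), and the balanced-product invariants are only taken afterwards. More importantly, the codimension-one verification that you correctly identify as the heart of the proof is exactly the step you omit, and it is where the gluability hypothesis actually lives. In the paper this step is not a transverse-slice analysis on $\cH$: one filters the sequence $0\to\tA(G,\bfN)_{\tcH}\to\tA(G)_{\tcH}\oplus\tA(G)_{\tcH}\to\tA(G)^{\circ}_{\tcH}$ by the Schubert stratification of $\Gr_G$ and computes on associated graded, where the two maps are governed by the Euler classes $\varphi_\ell=e(\mathcal{T}^1_{\lambda}/\mathcal{R}_{\lambda})$ and $\varphi_r=e(\mathcal{T}^0_{\lambda}/\mathcal{R}_{\lambda})$; gluability is precisely the condition that these products of weight-linear forms remain nonzero and coprime after restriction to $\O(\LtT_{H})$, and exactness then follows from the fact that $\O(\tc{G_{\lambda}})$ is a UFD. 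This is a computation indexed by dominant coweights $\lambda$ of $G$, not by hyperplanes in $\cH$, and it is not clear your proposed local analysis would see it. Until you supply this step (or an equivalent one), the proposal records the intended shape of the theorem rather than a proof of it.
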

\begin{Remark}
The obvious analogues of \cref{Intro Statement of Functoriality Theorem,gluing for quiver case} hold for the K-theoretic Coulomb branch (denoted $\EuScript{C}_4(G; \bfN)$ in \cite{TelemanTheRoleofCoulombBranchesin2DTheory}); that is,
\begin{equation}\label{Intro K-theory Isomorphism}\EuScript{C}_4(H, 0) \times_{G\sslash G}^{\EuScript{C}_4(G, 0)} \EuScript{C}_4(G, \bfN) \xrightarrow{\sim} \EuScript{C}_4(H, \bfN).\end{equation}
The proof is so similar that we will only include a few remarks (\cref{K-theory 1,K-theory 2}) about the necessarily changes in the definition and theorems.
\end{Remark}

Observe that the domain of the map \cref{Intro Isomorphism In First Theorem} can be obtained from the product $\M_C(H, 0) \times  \M_C(G, \bfN)$ by first restricting to a coisotropic subscheme and then quotienting by the null fibration, which is the orbits of $\M_C(G, 0)$. In this way, \cref{Intro Statement of Functoriality Theorem} can be informally restated as saying that $\M_C(H, \bfN)$ is obtained from $\M_C(H, 0) \times  \M_C(G, \bfN)$ by a Hamiltonian reduction procedure.  If we consider an extended Moore-Tachikawa category where the group schemes $\M_C(H, 0)$ are included as objects, we can view $\M_C(H, 0)$ as a 1-morphism $\M_C(G, 0)\to \M_C(H, 0)$ and the isomorphism \cref{Intro Isomorphism In First Theorem} can be interpreted as saying that this 1-morphism sends $\M_C(G, \bfN)$ as a Hamiltonian $\M_C(G, 0)$-space to $\M_C(H, \bfN)$. It is also instructive to compare the construction in \cref{Intro Statement of Functoriality Theorem} with that of \cite{CrooksMayrandSymplecticReduction}.

\begin{Remark}\label{We Think You Dont Need Gluability}
    It seems likely to the authors that our gluability assumption in \cref{Intro Statement of Functoriality Theorem} is not necessary; however, proving this result in general will likely require quite different techniques. We plan to develop these techniques in future work.
\end{Remark}

\subsection{Dismemberments of quiver gauge theories}\label{Dismemberments of Quiver Gauge Theories Subsection}
We now discuss a consequence of our above functoriality result, \cref{gluing for quiver case}, which informally says that the Coulomb branch associated to any quiver with no loops can be determined from Coulomb branches associated to quivers with exactly two vertices. To this end, we set the following notation, primarily following \cite{BravermanFinkelbergNakajimaQuiver}. Let $Q$ denote a quiver with finite vertex set $Q_0$ and finite edge set $Q_1$. We allow multiple edges incident to the same vertices and say that two edges in $Q_1$ are {\bf parallel} if they join the same vertices, with the same or opposite orientations. We also assume that we have fixed a dimension vector $\mathbf{n}\colon Q_0\to \Z_{\geq 0}$, where as usual, we write $n_i:=\mathbf{n}(i)$. In this case, we can consider the usual quiver gauge theory.  That is, we let $V_Q := \bigoplus_{i \in Q_0}\aff^{n_i}$, which we view as a $Q_0$-graded vector space, and let \[G(Q,\mathbf{n}):=\GL(V_Q) = \prod_{i \in Q_0}\GL(V_i)\] denote the graded endomorphisms of $V_Q$, which naturally acts on the vector space \[\bfN_Q := \bigoplus_{i \to i'}\Hom(\aff^{n_i}, \aff^{n_{i'}})\] where this sum varies over the edge set. We introduce the following definition:

 
\begin{Definition}\label{Dismemberment Definition}
	Given a quiver $(Q_0,Q_1)$, a {\it dismemberment} is a quiver $(\check{Q}_0,\check{Q}_1)$ equipped with a morphism of quivers $\gamma\colon (\check{Q}_0,\check{Q}_1)\to (Q_0,Q_1)$ which is a bijection between $\check{Q}_1\to Q_1$ and a bijection on all vertices with no edges connected to them.
\end{Definition} 
We can thus think of a dismemberment as keeping the edge set $Q_1$ fixed, but potentially carving each vertex in $Q_0$ into several separate vertices.  The dismemberment condition ensures that $\gamma$ is surjective on vertices;  thus given a choice of dimension vector $\mathbf{n}\colon Q_0\to \Z_{\geq 0}$, precomposition with $\gamma$ gives a dimension vector $\mathbf{\check{n}}$.  Under these conditions, we have a natural map of groups \begin{equation}\label{Inclusion Map by Dismemberment}\GL(V_Q)\hookrightarrow\GL(V_{\check{Q}})\end{equation} mapping $\GL(V_i)$ by the diagonal map into the product $\prod_{\gamma(j)=i}\GL(V_j)$, and this is compatible with the representations on $\bfN_{Q} = \bfN_{\check{Q}}$. Setting 
\[\M(Q,\mathbf{n})=\M(\GL(V_Q),\bfN_Q),\] we may now state the following consequence of \cref{Intro Statement of Functoriality Theorem}, which we will show in \cref{Proof of Gluing for Quiver Case Subsection}: 

\begin{Corollary}\label{gluing for quiver case}
If $Q$ has no loops and all pairs of parallel edges lift to parallel edges in $\check{Q}$, there is an isomorphism \begin{equation}\label{Gluing Isomorphisms for Quiver}\M(Q,\mathbf{n})\cong \M(G(Q,\mathbf{n}))\times_{\mathfrak{g}(\check{Q},\mathbf{n})\sslash G(\check{Q},\mathbf{n})}^{\mathcal{M}(G(\check{Q},\mathbf{\check n}))} \M(\check{Q},\mathbf{\check n})\end{equation} of affine schemes, where $\mathfrak{g}(\check{Q},\mathbf{n}) := \mathrm{Lie}(G(\check{Q},\mathbf{n}))$. 
\end{Corollary}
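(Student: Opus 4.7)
The plan is to apply \cref{Intro Statement of Functoriality Theorem} directly to the inclusion $\mapFromHToG\colon G(Q,\mathbf{n})\hookrightarrow G(\check Q,\check{\mathbf n})$ of \cref{Inclusion Map by Dismemberment}. Because $\gamma$ is a bijection on edges, the representations $\bfN_Q$ and $\bfN_{\check Q}$ are canonically identified as vector spaces, and the restriction of the $G(\check Q,\check{\mathbf n})$-action on $\bfN_{\check Q}$ along $\mapFromHToG$ agrees with the given $G(Q,\mathbf n)$-action on $\bfN_Q$. Granting this, the right-hand side of \cref{Gluing Isomorphisms for Quiver} is precisely the balanced product appearing on the left of \cref{Intro Isomorphism In First Theorem} with $H=G(Q,\mathbf n)$ and $G=G(\check Q,\check{\mathbf n})$, while the left-hand side is $\M_C(G(Q,\mathbf n),\bfN_Q)=\M(Q,\mathbf n)$; the corollary then follows from \cref{Intro Statement of Functoriality Theorem} as soon as we verify its hypothesis.

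The real content is therefore to check that the inclusion $\mapFromHToG$ is gluable for $\bfN_{\check Q}$ in the sense of \cref{def:gluable}. Both hypotheses of the corollary enter here. The absence of loops in $Q$ forbids summands of $\bfN_{\check Q}$ of the form $\End(\aff^{n_i})$ on which the diagonal subgroup $\GL(V_i)\subset\prod_{\gamma(j)=i}\GL(V_j)$ would act by the adjoint representation, which is the first way gluability can fail. The assumption that parallel edges in $Q$ lift to parallel edges in $\check Q$ plays a complementary role: for each edge $e\colon i\to i'$ of $Q$, the block $\bigoplus_{\gamma(\check e)=e}\Hom(\aff^{n_i},\aff^{n_{i'}})$ of $\bfN_{\check Q}$ is then a multiplicity space for a single $H$-isotype, which avoids any mismatch between $H$-invariants and $G$-invariants introduced by passing from $\check Q$ to $Q$.

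My strategy is to decompose $\bfN_{\check Q}=\bigoplus_{e\in Q_1}\bfN_{\check Q,e}$ along the fibres of the bijection $\gamma_1\colon\check Q_1\to Q_1$, verify gluability summand by summand using the weight decomposition with respect to a maximal torus of $G(\check Q,\check{\mathbf n})$ containing a maximal torus of $G(Q,\mathbf n)$, and then combine. The main obstacle is exactly this case-by-case verification of gluability, since this is the only place the two hypotheses are required, and the cases they exclude are precisely those in which the weight calculation would fail. Once gluability is established, the isomorphism \cref{Gluing Isomorphisms for Quiver} is immediate from \cref{Intro Statement of Functoriality Theorem} by unwinding the definitions of $\M(Q,\mathbf n)$ and $\M(\check Q,\check{\mathbf n})$.
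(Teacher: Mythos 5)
Your overall route is the paper's: everything reduces to checking that the dismemberment inclusion $\GL(V_Q)\hookrightarrow\GL(V_{\check Q})$ is gluable for $\bfN_{\check Q}$ and then invoking \cref{Intro Statement of Functoriality Theorem}. However, there is a genuine gap in the gluability check. The hypothesis of the corollary allows parallel edges of $Q$ with \emph{opposite} orientations (the paper's notion of parallel explicitly includes this), and for such a pair the inclusion is simply not gluable: if $e\colon i\to i'$ and $e'\colon i'\to i$ are parallel and lift to parallel edges of $\check Q$, then $\bfN$ contains weights $\xi_1$ and $\xi_2$ with $\xi_1|_{T_H}=-\xi_2|_{T_H}$, so condition (1) of \cref{def:gluable} holds with $\alpha=-1$, and since the lifts are parallel one also has $\xi_1=-\xi_2$ as weights of $T_G$, so any cocharacter $\mu$ with $\langle\xi_1|\mu\rangle\neq 0$ gives $\langle\xi_1|\mu\rangle\langle\xi_2|\mu\rangle<0$, i.e.\ condition (2). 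No amount of careful weight bookkeeping will establish gluability in this situation; you must first use the fact that reversing the orientation of an edge leaves the Coulomb branch unchanged to reorient $Q$ so that all parallel edges point the same way, and only then verify gluability. This reorientation is exactly the first sentence of the paper's proof and is why the auxiliary statement \cref{lem:gluable quiver} carries the extra hypothesis that parallel edges have the same orientation.

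A second, smaller defect is the plan to verify gluability ``summand by summand along the fibres of the edge bijection $\check Q_1\to Q_1$ and then combine.'' Since that map is a bijection, each fibre is a single edge, while the dangerous pairs $(\xi_1,\xi_2)$ in \cref{def:gluable} are precisely cross-terms between weights attached to \emph{distinct} parallel edges; gluability checked one edge-summand at a time says nothing about these pairs, and there is no formal ``combining'' step for a condition quantified over all pairs of weights of the full representation. The correct argument (after reorientation) is the global one of \cref{lem:gluable quiver}: since $Q$ has no loops, every weight restricted to $T_H$ is a difference of characters of two distinct $\GL$-factors, two such weights are rational multiples of one another only if they come from parallel edges with the same matrix indices, sameness of orientation forces them to be equal as $H$-weights, and the lifting hypothesis forces them to be equal as $G$-weights, so condition (2) can never hold.
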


This result informally says that the Coulomb branch associated to any quiver $Q$ is determined by the Coulomb branch associated to a dismemberment of $Q$. Thus, for any quiver, it is natural to consider the canonical dismemberment where the set of edges joining each pair of vertices gives its own component.  It would be interesting to give a more geometric description of the Coulomb branches corresponding to these components; such a description is known in the case of a quiver with exactly one edge, using the theory of bow varieties (see \cite[\S 3.1]{nakajimaCherkisBow2017}).  This is generalized in \cref{lem:bow}.

Some examples arising from quivers also suggest a connection between our results and relative Langlands duality, giving a candidate construction of the relative Langlands dual (in the sense of \cite{BenZviSakleredisVenkateshRelativeLanglandsDuality}) of the Hamiltonian $G$-variety $T^*\bfN$ for a finite dimensional $G$-representation $\bfN$, which we discuss in \cref{section: recovering the S-dual}. We also give a related conjecture, \cref{Recoverability Conjecture}, which informally states that one can recover the ring object corresponding to $\M(G,\bfN)$ on $\Gr_G$ from the $\M_C(G, 0)$-action on $\M(G,\bfN)$ and that our candidate construction of the relative Langlands dual agrees with the definition implicitly used in \cite[Section 8]{BenZviSakleredisVenkateshRelativeLanglandsDuality} (see in particular \cite[Conjecture 8.1.8]{BenZviSakleredisVenkateshRelativeLanglandsDuality} and \cite[Remark 8.3.2]{BenZviSakleredisVenkateshRelativeLanglandsDuality}) and stated explicitly by Nakajima in \cite{NakajimaSDualOfHamiltonianGSpacesandRelativeLanglandsDuality}. As explained in \cref{rem:Gaiotto-Witten}, our conjecture can be seen as an application of \cref{Intro Statement of Functoriality Theorem} to  the prescription of Gaiotto and Witten for this dual given in \cite[\S 4.3]{gaiottoSDuality2009}.  
%
\begin{Remark}
As we have already mentioned in \cref{We Think You Dont Need Gluability}, we believe that \cref{Intro Statement of Functoriality Theorem} holds without the gluability hypothesis. Assuming this is true, one can immediately obtain an isomorphism of the form \cref{Gluing Isomorphisms for Quiver} for an arbitrary dismemberment of an arbitrary quiver. 

We observe that if one has an isomorphism of the form \cref{Gluing Isomorphisms for Quiver} for $\check{Q}$ the finest possible dismemberment of an arbitrary quiver $Q$, then the Coulomb branch of any quiver gauge theory can be obtained from products of varieties of the form $\M(Q, (n_i,n_j))$ by applying the above Hamiltonian reduction procedure.
\end{Remark}
We also apply \cref{Intro Statement of Functoriality Theorem} to study the Coulomb branch of the \textit{fission} of a quiver; see \cref{Explosions of quiver gauge theories subsection} for more details.
\subsection{Corollaries for affine closure of $T^*(G/U_P)$}\label{Corollary for Partial Implosion}
We fix a parabolic subgroup $P$ of $G$, and let $U_P$ denote the unipotent radical of $P$, and $L$ a fixed Levi subgroup of $P$. Our main cases of interest will be the parabolic subgroups $P$ of the groups $\GL_{n}$ or $\SL_{n}$. Recall that, for such $G$, any parabolic subgroup of $G$ is conjugate to exactly one \textit{standard parabolic subgroup}: a group of block upper triangular matrices in $G$ with block sizes indexed by an ordered partition $n = m_1 + ... + m_l$ of $n$, which we now fix once and for all and denote $\vec{m}$.
In this case, it is not difficult to directly check that $P$ is a semi-direct product of its unipotent radical $U_P$ of block strictly upper triangular matrices with the \textit{standard Levi subgroup} $L$ of block diagonal matrices in $G$ with block sizes indexed by $\vec{m}$. 

The affine scheme $\overline{T^*(G/U_P)} := \Spec(\mathcal{O}(T^*(G/U_P))$ given by the spectrum of the functions on the cotangent bundle $T^*(G/U_P)$ is sometimes referred to as the \textit{affine closure} of $T^*(G/U_P)$ since $T^*(G/U_P)$ is quasi-affine and therefore is an open subset of $\overline{T^*(G/U_P)}$, and is also known as a \textit{partial implosion} for $G$.  See \cite{DancerGrimmingerMartensZhongComplexSymplecticContractionsand3dMirrors} for a more detailed discussion. Our second main result proves that $\overline{T^*(G/U_P)}$ is a Coulomb branch in the sense of \cite{BravermanFinkelbergNakajimaTowardsaMathematicalDefinitionOfCoulombBranchesOf3dNEquals4GaugeTheoriesII} when $G = \GL_{n}$ or $G = \SL_{n}$, which proves a conjecture of Bourget-Dancer-Grimminger-Hanany-Zhong \cite{BourgetDancerGrimmingerHananyZhongPartialImplosionsandQuivers}, see also \cite{DancerHananyKirwanSymplecticDualityandImplosions}. We state the $\GL_{n}$ variant of this conjecture here for the ease of exposition and state the modification required for the case $G = \SL_{n}$ in \cref{Full Statement Coulomb Branches of Particular Quivers}. 

For our fixed ordered partition $\vec{m}$, we let $Q_{\vec{m}}$ denote any quiver whose underlying graph is
\begin{equation*}
\begin{tikzpicture}
    \tikzstyle{gaugered} = [circle, draw=black, inner sep=1pt, minimum size=6pt, label distance=1.5pt]

    \tikzstyle{gaugeblue} = [circle, draw=black, inner sep=1pt, minimum size=6pt, label distance=1.5pt]
    \tikzstyle{gaugepurple} = [circle, draw=black, inner sep=1pt, minimum size=6pt, label distance=1.5pt]
    \tikzstyle{gauge} = [circle, draw, inner sep=1pt, minimum size=6pt, label distance=1.5pt]
    
    \node[] (0) at (0.2,0) {\textcolor{white}{Q m := }};
    \node[gaugered, label=below:{$1$}] (1) at (1,0) {};
    \node[gaugered, label=below:{$2$}] (2) at (2,0) {};
    \node (3) at (3,0) {$\cdots$};
    \node[gaugered, label=below:{$n-2$}] (4) at (4,0) {};
    \node[gaugered, label=below:{$n-1$}] (5) at (5,0) {};
    
    \draw (1)--(2)--(3)--(4)--(5);
    
    \node[gaugeblue, label=above:{$m_1$}] (6u) at (6,0.5) {};
    \node[gaugeblue, label=above:{$m_1 - 1$}] (6u2) at (7,0.5) {};
    \node (7u) at (8,0.5) {$\cdots$};
    \node[gaugeblue, label=above:{$1$}] (8u) at (9,0.5) {};
    \node[gaugeblue, label=above:{$m_{l - 1}$}] (6m) at (6,-0.3) {};
    \node[gaugeblue, label={[xshift=0.2cm]above:{$m_{l - 1} - 1$}}] (6m2) at (7,-0.3) {};
    \node (7m) at (8,-0.3) {$\cdots$};
    \node (7x) at (8.5, 0.2) {$\vdots$};
    \node[gaugeblue, label=above:{$1$}] (8m) at (9,-0.3) {};
    \node[gaugeblue, label=below:{$m_l$}] (6d) at (6,-0.6) {};
    \node[gaugeblue, label=below:{$m_l - 1$}] (6d2) at (7,-0.6) {};
    \node (7d) at (8,-0.6) {$\cdots$};
    \node[gaugeblue, label=below:{$1$}] (8d) at (9,-0.6) {};
    
    \draw (5)--(6u)--(6u2)--(7u)--(8u)
          (5)--(6m)--(6m2)--(7m)--(8m)
          (5)--(6d)--(6d2)--(7d)--(8d);
    
    \draw [decorate, decoration={brace, amplitude=5pt}, xshift=0, yshift=0]
    (9.5,0.8)--(9.5,-0.9) node [black, midway, xshift=1cm, yshift=0] {$l \text{ lines}$};
\end{tikzpicture}
\end{equation*}
where to each vertex, we label by a nonnegative integer. We recall the representation $\bfN_{Q_{\vec{m}}}$ of the group $\GL_{Q_{\vec{m}}} := \prod_{v} \GL_{n_v}$ defined in \cref{Dismemberments of Quiver Gauge Theories Subsection}, where the product is taken over the vertices of $Q_{\vec{m}}$. Our final main result states that $\overline{T^*(\GL_{n}/U_P)}$ is a Coulomb branch for the quiver gauge theory with quiver $Q_{\vec{m}}$:

\begin{Theorem}\label{final main theorem}
There is an isomorphism of affine varieties $\overline{T^*(\GL_{n}/U_P)} \cong \mathcal{M}_C(\GL_{Q_{\vec{m}}}, \bfN_{Q_{\vec{m}}})$. 
\end{Theorem}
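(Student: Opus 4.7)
The plan is to prove \cref{final main theorem} by applying \cref{gluing for quiver case} to the quiver $Q_{\vec{m}}$ with a natural dismemberment, and then identifying each component of the resulting balanced product with a smaller partial implosion via induction on $n$, terminating at the two-vertex case of \cref{One Edge Two Vertices Quiver Example}.

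First, I would take $\check{Q}_{\vec{m}}$ to be the dismemberment obtained by splitting the vertex of dimension $n-1$ at the junction of the main chain and the $l$ branches into $l+1$ copies, each inheriting exactly one of its incident edges. Since $Q_{\vec{m}}$ has no loops and no parallel edges, the hypotheses of \cref{gluing for quiver case} are satisfied (vacuously on the parallel-edge condition), and we obtain an explicit description of $\M(Q_{\vec{m}},\mathbf{n})$ as a balanced product over the Coulomb branch of $\check{Q}_{\vec{m}}$. The dismembered quiver $\check{Q}_{\vec{m}}$ is disconnected, consisting of the main $A_{n-1}$-type chain with dim vector $(1,2,\ldots,n-1)$ together with $l$ branches of $A$-type with dim vectors $(n-1, m_j, m_j - 1, \ldots, 1)$, so that $\M(\check{Q}_{\vec{m}},\mathbf{\check n})$ factors as a product over these components.

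Next, I would identify each component with a partial implosion (or related affine symplectic variety for a smaller group), by induction on $n$ together with iterated application of \cref{gluing for quiver case}. The $\GL_{n-1}$-actions coming from the $l+1$ duplicated copies of the vertex of dimension $n-1$ are compatible with the group-scheme action of $\M_C(\GL_{n-1}, 0)$ provided by \cref{Action for General Coulomb Branches}, and they match the Hamiltonian $\GL_{n-1}$-actions on the smaller partial implosions under the inductive identification.

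Finally, I would match the balanced product from \cref{gluing for quiver case} with the geometric realization of $\overline{T^*(\GL_n/U_P)}$ as a Hamiltonian-reduced gluing of these smaller partial implosions along their shared $\GL_{n-1}$-action, so that the gluing along the main-chain copy of the dim-$(n-1)$ vertex encodes the action of $\GL_n$ on $\GL_n/U_P$ and the gluing along each branch copy encodes the action of the corresponding Levi factor $\GL_{m_j}$. The main obstacle is this final matching: translating between the algebraic balanced product (built from the BFN homology construction and its functoriality under change of gauge group) and the explicit symplectic geometry of the partial implosion, which requires showing that the moment-map data on both sides encode the same gluing.
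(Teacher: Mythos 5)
Your proposal has the right overall shape---decompose the quiver via functoriality, identify the pieces, then reassemble---but both substantive steps are left open, and the decomposition you chose makes them harder. After dismembering at the junction vertex, your components are the chain $(1,\dots,n-1)$ and the legs $(n-1,m_j,\dots,1)$. None of these is a partial implosion for a smaller group, and the proposed induction on $n$ has nothing to recurse on: $\overline{T^*(\GL_{n-1}/U_{P'})}$ for a smaller parabolic $P'$ does not appear among these Coulomb branches, and \cref{One Edge Two Vertices Quiver Example} only identifies two-vertex quivers with based-maps spaces, which does not feed back into the implosion picture. The paper instead applies \cref{Intro Statement of Functoriality Theorem} to the map $\GL_{Q_{\vec m}}\to \GL_{A_{\vec m}}\times \GL_{A_n}$, where $A_n$ is the full chain $(1,\dots,n)$ and the Levi $L=\prod_j\GL_{m_j}$ sits block-diagonally inside the extra dimension-$n$ vertex; the point of that choice is that the two resulting pieces are exactly the linear quivers whose Coulomb branches are $\Kost{\GL_n}$ and $\Kost{L}$, the restrictions of $T^*\GL_n$ and $T^*L$ to Kostant slices. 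Even that identification is not free: it is \cref{lemma that relevant Coulomb branches are Kostant sections}, an \emph{equivariant} upgrade of the Braverman--Finkelberg--Nakajima ring-object theorem, proved in the appendix via derived geometric Satake.

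More seriously, the step you flag as ``the main obstacle'' is the actual content of the theorem and cannot be deferred. With the paper's decomposition it becomes the statement that $(\Kost{L}\times_{\mathfrak{c}_{\GL_n}}\Kost{\GL_n})/J_{\GL_n}\cong \overline{T^*(\GL_n/U_P)}$, which is precisely the main theorem of the prior Kostant--Whittaker descent paper cited in the proof of \cref{Full Statement Coulomb Branches of Particular Quivers}; no analogous statement is available for your pieces. So the two lemmas your argument needs---an equivariant identification of the constituent Coulomb branches with concrete symplectic varieties, and a geometric description of the resulting balanced product as the partial implosion---are exactly the nontrivial inputs, and neither is supplied.
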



From \cref{final main theorem} and the $\SL_{n}$ variant of \cref{Full Statement Coulomb Branches of Particular Quivers}, one can immediately derive the following corollary from known facts on the geometry of Coulomb branches: 
\begin{Corollary}\label{Corollaries on Geometry of Coulomb Branches}If $G = \SL_{n}$ or $G = \GL_{n}$ then \begin{enumerate}
    \item The ring of functions on $T^*(G/U_P)$ is finitely generated (by \cite[Proposition 6.8]{BravermanFinkelbergNakajimaTowardsaMathematicalDefinitionOfCoulombBranchesOf3dNEquals4GaugeTheoriesII}).
    \item The variety $\overline{T^*(G/U_P)}$ has \textit{symplectic singularities} in the sense of \cite{BeauvilleSymplecticSingularities}; in particular, it has Gorenstein rational singularities (by \cite[Theorem 1.1]{BellamyCoulombBranchesHaveSymplecticSingularities} or \cite[Theorem 1]{WeekesQuiverGaugeTheoriesandSymplecticSingularities}).
    \item There is a stratification of $\overline{T^*(G/U_P)}$ by finitely many holomorphic symplectic subvarieties (by \cite[Theorem 2.3]{KaledinSymplecticSingularitiesfromthePoissonPointofView}).
    \item The variety $\overline{T^*(G/U_P)}$ is independent of the ordering of the ordered partition $\vec{m}$ of $n$.
\end{enumerate}
\end{Corollary}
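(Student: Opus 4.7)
The plan is to reduce each of the three assertions to a known structural property of BFN Coulomb branches via the identification supplied by \cref{final main theorem} (and its $\SL_n$ counterpart \cref{Full Statement Coulomb Branches of Particular Quivers}), and then invoke the cited structural results as black boxes. Since the technical content of the corollary already lies in that identification, the argument should largely be bookkeeping, and the main obstacle in the overall argument sits in \cref{final main theorem} rather than here.

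For statement (1), I would first observe that $G/U_P$ is quasi-affine, so $T^*(G/U_P)$ is quasi-affine and embeds as a dense open subscheme of $\overline{T^*(G/U_P)} = \Spec \mathcal{O}(T^*(G/U_P))$. Consequently $\mathcal{O}(T^*(G/U_P))$ coincides with the coordinate ring of $\overline{T^*(G/U_P)}$, which by \cref{final main theorem} is the coordinate ring of the Coulomb branch $\mathcal{M}_C(\GL_{Q_{\vec{m}}}, \bfN_{Q_{\vec{m}}})$. This ring is finitely generated by \cite[Proposition 6.8]{BravermanFinkelbergNakajimaTowardsaMathematicalDefinitionOfCoulombBranchesOf3dNEquals4GaugeTheoriesII}, which settles (1).

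For statements (2) and (3), the identification again reduces each question to a known fact about Coulomb branches. I would invoke \cite[Theorem 1.1]{BellamyCoulombBranchesHaveSymplecticSingularities} (or, in the quiver-specific form directly applicable here, \cite[Theorem 1]{WeekesQuiverGaugeTheoriesandSymplecticSingularities}), which asserts that every BFN Coulomb branch has symplectic singularities in the sense of \cite{BeauvilleSymplecticSingularities}; since such singularities are by definition Gorenstein and rational, this yields (2). Statement (3) then follows from \cite[Theorem 2.3]{KaledinSymplecticSingularitiesfromthePoissonPointofView}, which produces a finite stratification by holomorphic symplectic subvarieties on any variety with symplectic singularities.

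The only subtlety I would want to double-check before declaring the proof complete is that the $\SL_n$ variant formulated in \cref{Full Statement Coulomb Branches of Particular Quivers} covers an arbitrary standard parabolic $P \subseteq \SL_n$ uniformly in the partition $\vec{m}$, so that the argument applies uniformly for $G \in \{\GL_n, \SL_n\}$; but since that variant is designed to parallel the $\GL_n$ case across all partitions, no extra input should be required beyond what is already in the two main theorems.
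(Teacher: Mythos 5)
Your proposal is correct and matches the paper's intended derivation exactly: the paper states that the corollary follows "immediately" from \cref{final main theorem} and the $\SL_n$ variant of \cref{Full Statement Coulomb Branches of Particular Quivers} together with the cited black-box results on Coulomb branches, which is precisely your reduction. No further comment is needed.
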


Let us mention in passing that the finite generation of the ring of functions on $T^*(G/U_P)$ is nontrivial, as this ring is given by the invariants of a finitely generated algebra by the unipotent group $U_P$ and thus need not a priori be finitely generated. In the case where $P$ is a Borel subgroup, this finite generation was proved for $G = \SL_{n}$ in \cite{DancerKirawanSwannImplosionForHyperKahlerManifolds}, and was proved for general reductive groups in \cite{GinzburgRicheDifferentialOperatorsOnBasicAffineSpaceandtheAffineGrassmannian}. The proof of \cite{GinzburgRicheDifferentialOperatorsOnBasicAffineSpaceandtheAffineGrassmannian} shows that, in essentially all cases, the functions on $T^*(G/U_P)$ is not generated by the \lq na\"ive guess\rq{} of generating set: the pullback of functions on the base $G/U_P$ along with the global vector fields $\LG \oplus \mathfrak{l}$ obtained from the $G \times L$ action on $G/U_P$.  On the other hand, results of Weekes \cite{WeekesCoulomb} construct an explicit generating set for Coulomb branches of quiver gauge theories generalizing that of the Yangian; thus \cref{final main theorem} gives such a presentation on the ring of functions on $T^*(\GL_n/U_P)$. 

We also observe that \cref{Corollaries on Geometry of Coulomb Branches}(2) implies a generalization of the \textit{Ginzburg-Kazhdan conjecture} \cite{GinzburgKazhdanDifferentialOperatorsOnBasicAffineSpaceandtheGelfandGraevAction}, originally proved for $G = \SL_{n}$ in \cite{JiaTheGeometryOfTheAffineClosureOfCotangentBundleOfBasicAffineSpaceForSLn} and in \cite{GannonProofOftheGinzburgKazhdanConjecture} for general reductive groups, to the parabolic setting when $G$ is $\SL_{n}$ or $\GL_{n}$. 

\subsection{Acknowledgments} We thank David Ben-Zvi, Alexander Braverman, Philip Boalch, Peter Crooks, Sanath Devalapurkar, Davide Gaiotto, Benjamin Gammage, Amihay Hanany, Justin Hilburn, Hiraku Nakajima, Morgan Opie, Brian Shin, Aaron Slipper, and Harold Williams for interesting and useful discussions. The first author would also like to thank Perimeter Institute, where much of this project was completed, for their hospitality. B.W. is supported by Discovery Grant RGPIN-2024-03760 from the
  Natural Sciences and Engineering Research Council of Canada.
This research was also supported by Perimeter Institute for Theoretical Physics. Research at Perimeter Institute is supported by the Government of Canada through the Department of Innovation, Science and Economic Development and by the Province of Ontario through the Ministry of Research and Innovation.

\section{Proof of Coulomb branch functoriality}
\label{sec:proof}

Throughout the paper, we will use $G$ to denote a complex reductive group and $\mathbf{N}$ a representation of $G$.  Fix a reductive group $\tG$ equipped with inclusion of $G$ as a normal subgroup such that $F=\tG/G$ is a torus, and further assume that the representation $\bfN$ extends to $\tG$ in the sense that the following diagram
\[\begin{tikzcd}
	\tG && {GL(\bfN)} \\
	& G
	\arrow["\imath", from=1-1, to=1-3]
	\arrow[hook, from=2-2, to=1-1]
	\arrow[hook, from=2-2, to=1-3]
\end{tikzcd}\] commutes. One important example that makes sense for any group is when $\tilde{G}=G\times \Gm$ with $\Gm$ acting on $\mathbf{N}$ with the usual scalar multiplication. In what follows, we will use the notation \newcommand{\Gmsc}{\mathbb{G}_m^{\mathrm{sc}}}$\Gmsc := \mathbb{G}_m$ to denote the copy of $\mathbb{G}_m\subset GL(\bfN)$ given by scalar matrices.  We let $W$ denote the common Weyl group of $G$ and $\tG$.  

For any affine scheme $X$, we let $\mathcal{O}(X)$ denote the ring of global functions on $X$. For any group scheme $\mathcal{G}$ acting on  $X$, we use the notation \[X\sslash \mathcal{G} := \Spec(\mathcal{O}(X)^{\mathcal{G}})\] to denote the spectrum of the $\mathcal{G}$-invariant functions on $X$, which is an affine scheme. As a particular case of this construction, we obtain the affine variety $\mathfrak{c}_G := \LG\sslash G$.

Recall that by \cite[Lemma 5.3]{BravermanFinkelbergNakajimaTowardsaMathematicalDefinitionOfCoulombBranchesOf3dNEquals4GaugeTheoriesII}, the Coulomb branch $\mathcal{M}_C(\tG, \mathbf{N})$ admits a flat map \begin{equation}\label{Flat Map for MCtildeG}\mathcal{M}_C(\tG, \mathbf{N}) \to \tcG := \LtG\sslash\tG \end{equation}and, moreover, by \cite[Section 3(v)]{BravermanFinkelbergNakajimaTowardsaMathematicalDefinitionOfCoulombBranchesOf3dNEquals4GaugeTheoriesII}, acquires a Hamiltonian action of $F^{\vee}$ which, by \cite[Proposition 3.18]{BravermanFinkelbergNakajimaTowardsaMathematicalDefinitionOfCoulombBranchesOf3dNEquals4GaugeTheoriesII}, has the property that its Hamiltonian reduction is $\M_C(G, \bfN)$.

\begin{Definition}
	We let $\mathcal{M}_C(\tG, G, \mathbf{N})$ be the scheme quotient $\mathcal{M}_C(\tG, \mathbf{N})\sslash F^{\vee}$, or equivalently the affine scheme whose functions are the $F^{\vee}$-invariant functions on $\mathcal{M}_C(\tG, \mathbf{N})$.  
\end{Definition} 

Using the map induced by the $F^{\vee}$-equivariance of \cref{Flat Map for MCtildeG}, we obtain a map $\mathcal{M}_C(\tG, G, \mathbf{N})\to \tcG$. Moreover, composing \cref{Flat Map for MCtildeG} with the natural quotient map, we see that the affine scheme $\mathcal{M}_C(\tG, G, \mathbf{N})$ is a flat family over the Lie algebra $\mathfrak{f}$ whose fiber over $0$ is the usual Coulomb branch $\mathcal{M}_C(G, \mathbf{N})$. 

For any scheme $Y \to Z_0$ over $Z_0$ and map $Z \to Z_0$, we write $Y_Z := Y \times_{Z_0} Z$ for the corresponding base change, viewed as a scheme over $Z$. We'll use several times the natural functoriality in the choice of $\tG$, which follows immediately for functoriality in equivariant homology:
\begin{Lemma}\label{Functoriality in tG'}
If $\tG'\subset \tG$ is a connected subgroup containing $G$, then $\mathcal{M}_C(\tG', G, \mathbf{N}) \cong \mathcal{M}_C(\tG, G, \mathbf{N})_{\LtG'\sslash\tG}$.  In particular, $\mathcal{M}_C(G, \mathbf{N})\cong \mathcal{M}_C(\tG, G,\mathbf{N})_{\cG}.$
\end{Lemma}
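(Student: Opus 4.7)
The strategy is to trace through the BFN construction and apply the functoriality of equivariant Borel--Moore homology under $\tG' \hookrightarrow \tG$. The key preliminary input is that the $F^\vee$-action on $\M_C(\tG, \bfN)$ from \cite[Section~3(v)]{BravermanFinkelbergNakajimaTowardsaMathematicalDefinitionOfCoulombBranchesOf3dNEquals4GaugeTheoriesII} arises from a grading on $\mathcal{O}(\M_C(\tG, \bfN))$ by $X_*(F)$, coming from the fibration $\Gr_{\tG} \to \Gr_F$ induced by the quotient $\tG \to F$; the degree-$\lambda$ piece is the contribution from the components of $\Gr_{\tG}$ mapping to $\lambda \in X_*(F) = \pi_0(\Gr_F)$. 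Consequently $F^\vee$-invariants coincide with the degree-$0$ part of this grading.

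To set up the base change cleanly, observe that since $G$ is normal in both $\tG'$ and $\tG$ with torus quotients $F' := \tG'/G \hookrightarrow F$, and $\tG$, $\tG'$, $G$ all share the common Weyl group $W$, there are canonical splittings $\tcG \cong \cG \times \LF$ and $\fc{\tG'} \cong \cG \times \LF'$. In particular the base change appearing on the right-hand side of the claimed isomorphism is equivalent to base change along $\LF' \hookrightarrow \LF$.

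Next, I would apply functoriality of equivariant Borel--Moore homology along $\tG' \hookrightarrow \tG$. The induced map $\Gr_{\tG'} \to \Gr_{\tG}$ identifies $\Gr_{\tG'}$ with the union of those components of $\Gr_{\tG}$ whose image in $\Gr_F$ lies in $X_*(F') \subset X_*(F)$, and the corresponding BFN variety for $(\tG', \bfN)$ identifies with the preimage inside the one for $(\tG, \bfN)$. Combined with reducing equivariance from $\tG(\mathcal{O})$ to $\tG'(\mathcal{O})$, which on functions corresponds to base change along $\mathcal{O}(\LF) \to \mathcal{O}(\LF')$, this yields an identification
\[\mathcal{O}(\M_C(\tG', \bfN)) \;\cong\; \bigg(\bigoplus_{\lambda \in X_*(F')} \mathcal{O}(\M_C(\tG, \bfN))_\lambda\bigg) \otimes_{\mathcal{O}(\LF)} \mathcal{O}(\LF').\]

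Finally, take $(F')^\vee$-invariants of both sides. The left side is $\mathcal{O}(\M_C(\tG', G, \bfN))$ by definition. On the right, $(F')^\vee$-invariants extract the $X_*(F')$-degree-$0$ summand, which coincides with the $X_*(F)$-degree-$0$ summand, namely $\mathcal{O}(\M_C(\tG, G, \bfN))$; the remaining tensor product then gives $\mathcal{O}(\M_C(\tG, G, \bfN) \times_{\tcG} \fc{\tG'})$, as desired. The ``in particular'' statement is the special case $\tG' = G$, where $\LF' = 0$ and $\fc{\tG'} = \cG$. The step requiring the most care is the first: matching the $F^\vee$-action with the $X_*(F)$-grading induced by the fibration over $\Gr_F$; everything else is a formal consequence of the functoriality of equivariant Borel--Moore homology.
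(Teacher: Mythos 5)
Your argument is correct and is exactly an unpacking of the justification the paper itself gives (the lemma is stated as following immediately from functoriality of equivariant Borel--Moore homology, with no further proof supplied): you identify the $F^{\vee}$-action with the $X_*(F)$-grading coming from $\pi_0(\Gr_F)$, observe that $\Gr_{\tG'}$ is the union of components of $\Gr_{\tG}$ lying over $X_*(F')$, and reduce equivariance from $\tG(\mathcal{O})$ to $\tG'(\mathcal{O})$ via base change along $\mathcal{O}(\tcG)\to\mathcal{O}(\fc{\tG'})$ before taking invariants. This matches the paper's intended route, just spelled out in more detail.
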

We hereafter use the notation 
 \begin{align*}
\M(G) &:= \mathcal{M}_C(G, 0)&\M(G, \mathbf{N}) &:= \mathcal{M}_C(G, \mathbf{N})  \\  
\tM(G) & := \mathcal{M}_C(\tG, G, 0)\cong \mathcal{M}_C(G,0)\times \LF&\tM(G, \mathbf{N}) & := \mathcal{M}_C(\tG , G, \mathbf{N}) 
 \end{align*}
 to avoid clutter.


\subsection{Gluing in massive case}\label{Coulomb Branch as Subring for Massive Representations Subsection} 
In this section, we will set some notation and use it to recall a result of Teleman \cite{TelemanTheRoleofCoulombBranchesin2DTheory} (see \cref{Functions on Massive Coulomb Branch as Coproduct of Open Embeddings}) on the ring of functions of certain Coulomb branches. We let $\tilde{T}$ define a maximal torus of $\tG$, let $T := G \cap \tilde{T}$ denote the corresponding maximal torus in $G$, and let $\LT$ and $\tilde{\LT}$ denote the respective Lie algebras. Let $\varphi_1,\dots, \varphi_d$ be the weights of the represention $\mathbf{N}$, considered as a multiset of linear functions on $\LtT$.  Each such weight can also be considered as a cocharacter $\Gm\to T^{\vee}$ to the dual torus, which we denote $a\mapsto a^{\varphi_i}$.  Thus, any non-zero weight defines a rational map $\LtT\dashrightarrow T^{\vee}$ defined by $\zeta\mapsto  {\varphi_i}(\zeta)^{\varphi_i}$.  The product of these functions defines a  rational map \[\epsilon_{\mathbf{N}}: \LtT \dashrightarrow T^{\vee} \qquad \epsilon_{\mathbf{N}}(\zeta)=\prod_{i=1}^d {\varphi_i}(\zeta)^{\varphi_i}. \]  
Since $\mathbf{N}$ is a $\tG$-representation, the multi-set $\{\varphi_1,\dots, \varphi_d\}$ is $W$-invariant, and so $\epsilon_{\mathbf{N}}$ is equivariant. The functoriality of Borel-Moore homology gives a birational map \[\tM(G)_{\LtT} \xrightarrow{} \tM(T)\] (say, by \cite[Lemma 5.17]{BravermanFinkelbergNakajimaTowardsaMathematicalDefinitionOfCoulombBranchesOf3dNEquals4GaugeTheoriesII}) and so $\epsilon_{\mathbf{N}}$ defines a $W$-equivariant rational map \begin{equation}\label{Rational Map Determined by Epsilon}\tM(G)_{\LtT} \xrightarrow{} \tM(T) = T^{\vee}\times \LtT\dashrightarrow T^{\vee}\times \LtT = \tM(T) \xleftarrow{} \tM(G)_{\LtT}\end{equation} where the dashed right arrow is given by the formula $(x, \zeta) \mapsto (x\epsilon_{\mathbf
N}(\zeta), \zeta)$. 

Let $\LtTc$ be the subset of $\LtT$ where all weights of $\mathbf{N}$ and roots of $\tG$ are non-zero, as functions on $\LtT$, and let $\LtGc$ denote the $G$-saturation of this set.  
  Of course, we have $\tcGc := \LtGc\sslash \tG\cong \LtTc/W.$  By \cite[Lemma 5.13]{BravermanFinkelbergNakajimaTowardsaMathematicalDefinitionOfCoulombBranchesOf3dNEquals4GaugeTheoriesII}, we have \begin{equation}
    \tM(G,\bfN)_{\LtGc\sslash G}\cong \tM(G)_{\LtGc\sslash G}\cong (\LtTc\times T^{\vee})/W.
\end{equation}  Since the rational section $\epsilon_{\mathbf{N}}$ is $W$-equivariant and has no zeros or poles on this open set, the formula \cref{Rational Map Determined by Epsilon} defines a regular map $\birat_{\mathbf{N}}^{\circ}\colon \tM(G)_{\tcGc}\to \tM(G)_{\tcGc}$, and thus a rational map $\birat_{\mathbf
N} \colon \tM(G)\to \tM(G)$. Since $\birat_{\mathbf{N}}^{\circ}$ is given by multiplication by a section and $\tM(G)$ is an abelian group scheme, the map $\birat_{\mathbf
N}$ commutes with the action of $\tM(G)$ on itself by left/right multiplication.

Let $\mathcal{F}(\tG, G, \mathbf{N})$ denote the ring of functions on $\tM(G,\bfN)$. 
\begin{Definition}\label{Coulomb Gluing Property Algebraically}
	We say that the triple $(\tG, G, \mathbf{N})$ has the {\bf Coulomb gluing property} if the ring of functions $\mathcal{F}(\tG, G, \mathbf{N})$ is the subset of regular functions on $\mathcal{F}(\tG, G, 0)$ such that the pullback by the rational map $\birat_{\mathbf N}$ is again regular. 
\end{Definition}

Translating \cref{Coulomb Gluing Property Algebraically} into the language of affine schemes, we immediately see that $(\tilde{G}, G, \bfN)$ has the Coulomb gluing property if and only if the deformed Coulomb branch $\tM(G,\bfN)$ is the pushout in the category of 
        affine schemes of 
        the diagram  
\begin{equation}\label{Pushout diagram}
    \begin{tikzcd}
	{\tM(G)_{\LtGc\sslash \tG}}  & {\tM(G)} \\
	{\tM(G)} & {\tM(G,\bfN)}
	\arrow["\iota", from=1-1, to=1-2]
	\arrow["{\iota\circ \birat_{\mathbf{N}}^{\circ}}"', from=1-1, to=2-1]
	\arrow[from=1-2, to=2-2]
	\arrow["{\jmath_{\mathbf{N}}}"', from=2-1, to=2-2]
\end{tikzcd}
\end{equation}
where $\iota$ is the inclusion of the open subset $\tM(G)_{\LtGc\sslash G}$. 

With this notation, we may now restate a theorem of Teleman:
\begin{Theorem}[\mbox{\cite[Theorem 1]{TelemanTheRoleofCoulombBranchesin2DTheory}}]\label{Functions on Massive Coulomb Branch as Coproduct of Open Embeddings} The triple $(G \times \Gmsc, G, \mathbf{N})$ has the Coulomb gluing property.
\end{Theorem}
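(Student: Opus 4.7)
The plan is to work with the BFN construction of the Coulomb branch as equivariant Borel--Moore homology and to verify the gluing property by localizing to the regular locus $\tcGc \subset \tcG$. By \cref{Flat Map for MCtildeG} the family $\tM(G, \bfN) \to \tcG$ is flat, so its ring of functions $\mathcal{A}(\tG, G, \bfN)$ is torsion-free over $\mathcal{O}(\tcG)$ and embeds into its localization at $\tcGc$; the same holds for $\mathcal{A}(\tG, G, 0)$. Thus both rings embed as subrings of a common generic target, and the Coulomb gluing property reduces to an explicit comparison of these subrings inside $\mathcal{O}((T^\vee \times \LtTc)/W)$.

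The first step is to identify the canonical isomorphism $\tM(G, \bfN)_{\tcGc} \cong \tM(G)_{\tcGc}$ coming from \cite[Lemma 5.13]{BravermanFinkelbergNakajimaTowardsaMathematicalDefinitionOfCoulombBranchesOf3dNEquals4GaugeTheoriesII} with the twist $\birat_\bfN^\circ$. Both sides of that lemma are presented through the same abelianization to $(T^\vee \times \LtTc)/W$, and the content of the claim is that the composition of these two identifications is exactly the multiplication by $\epsilon_\bfN$. I would verify this by computing the abelianization map on generators: a BFN class indexed by a coweight $\mu$ of $G$ lifts under abelianization to the $W$-symmetrization of $z^\mu$ times a monomial in the weights $\varphi_i$ coming from the Euler class of the relevant normal bundle. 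Comparing with the $\bfN = 0$ case (where no such monomial appears) produces precisely $\epsilon_\bfN$ as the ratio, so the abelianized isomorphism is $\birat_\bfN^\circ$.

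Given this identification, the pushout description of $\tM(G, \bfN)$ in \cref{Pushout diagram} becomes the statement that $\mathcal{A}(\tG, G, \bfN) \subset \mathcal{O}(\tM(G))$ consists precisely of those regular functions $f$ such that the rational pullback $\birat_\bfN^* f$ extends regularly to all of $\tM(G)$. Injectivity of the map from the pushout to $\tM(G, \bfN)$ is immediate from torsion-freeness, since any two lifts of the same function on $\tcGc$ must coincide. For surjectivity, I would use the explicit BFN convolution formula: every class in $\mathcal{A}(\tG, G, \bfN)$ corresponds to a pair of regular functions on $\tM(G)$ related by $\birat_\bfN^\circ$, namely its untwisted lift and its $\epsilon_\bfN$-shifted counterpart, and matching these is a direct check on the monomial prefactors produced by the abelianization.

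The main obstacle is controlling the behavior along the codimension-one walls in $\tcG$, namely the hyperplanes where either a root of $G$ or a weight of $\bfN$ vanishes. The scalar deformation by $\Gmsc$ is essential here: it forces every weight of $\bfN$ to depend nontrivially on the $\Gmsc$-direction, so no weight is identically zero on $\LtT$ and the discriminant decomposes cleanly into distinct simple components. Along each such component, the local convolution computation reduces to an elementary rank-one $\SL_2$ or $\Gm$ calculation, and matching this with the $\epsilon_\bfN$-twist yields the identification of rings and hence the Coulomb gluing property.
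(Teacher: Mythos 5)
The paper does not prove this statement from scratch: it is quoted as Teleman's theorem and then recovered as the special case $\tH=\tG=G\times\Gmsc$ of \cref{Base Change of Massive Coulomb Branch is Coulomb Branch of Massive Base Changes}, whose proof reduces the pushout description to exactness of the sequence $0 \to \tA(G,\bfN) \xrightarrow{i} \tA(G)\oplus\tA(G) \xrightarrow{D} \tA(G)^{\circ}$, checked on the associated graded of the Schubert filtration using the Euler classes $\varphi_\ell=e(\mathcal{T}^1_\lambda/\mathcal{R}_\lambda)$ and $\varphi_r=e(\mathcal{T}^0_\lambda/\mathcal{R}_\lambda)$. Your outline has the same computational skeleton (localize to $\tcGc$, identify the generic isomorphism with the $\epsilon_{\bfN}$-twist via the monomial prefactors, then extend over the walls), and your injectivity argument via torsion-freeness is fine.

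The gap is in the step that actually carries the content: exactness in the middle, i.e.\ showing that a pair $(f,g)$ of regular classes with $\varphi_r f=\varphi_\ell g$ on a stratum lies in the image of $i$, which requires writing $f=\varphi_\ell h$, $g=\varphi_r h$. This works precisely because $\mathcal{O}(\tc{G_\lambda})$ is a UFD and $\varphi_\ell$, $\varphi_r$ share no irreducible factor. You attribute the needed property of the $\Gmsc$-deformation to the fact that ``no weight is identically zero,'' but that is not the relevant mechanism: a weight can be nonzero and still be a \emph{negative rational multiple} of another weight (e.g.\ $\bfN=V\oplus V^*$ for $G=\Gm$, weights $+1$ and $-1$), in which case the two weights cut out the same hyperplane while one contributes to $\varphi_\ell$ and the other to $\varphi_r$ for a suitable $\lambda$, producing a common factor and breaking the descent. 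What the scalar $\Gmsc$ actually buys is that every weight of $\tG=G\times\Gmsc$ restricts to the identity character of $\Gmsc$, so two proportional weights of $\tG$ are forced to be \emph{equal}; equal weights pair with the same sign against every cocharacter $\mu$, hence can never appear in both $\varphi_\ell$ and $\varphi_r$. This is exactly condition (1)--(2) of \cref{def:gluable} specialized to $\tH=\tG$. Without stating and using this coprimality, the ``direct check on the monomial prefactors'' in your surjectivity step does not close, and your appeal to ``distinct simple components of the discriminant'' is both unjustified by what precedes it and not quite the condition needed (repeated weights on the same side are harmless; proportional weights on opposite sides are the obstruction).
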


\begin{Remark}\label{K-theory 1}
    A similar description of the K-theoretic (4-dimensional) Coulomb branch can also be given.  The primary difference is that the base is $\tilde{T}/W$ instead of $\LtT/W$, and the rational section is given by \[\lambda_{\mathbf{N}}: \LtT \dashrightarrow T^{\vee} \qquad \varepsilon_{\mathbf{N}}(t)=\prod_{i=1}^d (1-{\varphi_i}(t)^{-1})^{\varphi_i}, \]  where $\varphi_i\colon T\to \Gm\subset \mathbb{A}^1$ now denotes a multiplicative rather than additive character.  We can say that $(G \times \Gmsc, G, \mathbf{N})$ has the {\bf K-theoretic Coulomb branch gluing property} if the $K$-theoretic Coulomb branch is a similar pushout in the category of affine schemes.  
\end{Remark}

\subsection{Functoriality and gluing} In fact, a refinement of Teleman's proof of \cref{Functions on Massive Coulomb Branch as Coproduct of Open Embeddings} can be used to show a functorial upgrade of \cref{Functions on Massive Coulomb Branch as Coproduct of Open Embeddings}. 
Let $\tH\supset H$ be another pair of reductive groups with $\tH/H=F_H$ a torus. As in \cref{Coulomb Branch as Subring for Massive Representations Subsection}, we write \[\tcH := \LtH\sslash \tH\text{, and } \tcHc := \LtH^{\circ}\sslash \tH.\]
Let $\mapFromHToG: \tH \to \tG$ be a map of reductive groups inducing a map $H\to G$; we will occasionally refer to this as a map of pairs of algebraic groups \begin{equation}\label{Map of Pairs for Algebraic Groups}(\tH, H) \xrightarrow{\mapFromHToG} (\tG, G)\end{equation} induced by $\mapFromHToG$.

We choose a maximal torus $\tT_{H}\subset \tH$ and let $T_H=H\cap \tT_{H}$.  To avoid confusion, we denote the tori in $\tG,G$ chosen in \cref{Coulomb Branch as Subring for Massive Representations Subsection} by $\tT_G, T_G$.  Without loss of generality, we can assume that $\mapFromHToG(\tT_{H})\subset \tT_G$. We have an induced map of affine varieties \(\overline{\mapFromHToG}\colon \tcH  \xrightarrow{} \tcG\), and naturally obtain a map \begin{equation}\label{Base Change Coproduct}
        \tM(G)_{\tcH} \underset{\tM(G)_{\tcHc}}{\coprod} \tM(G)_{\tcH} \to \tM(G, \bfN)_{\tcH}
    \end{equation} induced by taking the base change of the diagram \cref{Pushout diagram} by $\overline{\mapFromHToG}$.

\begin{Definition}\label{def:gluable}
    We call a map of pairs \cref{Map of Pairs for Algebraic Groups} {\it gluable} for a representation $\tG \xrightarrow{} \GL(\bfN)$ if the representation has no pair of weights $\xi_1,\xi_2$ for $\tT_G$ such that \begin{enumerate}
  \item $\xi_1|_{\tT_H}=\alpha \xi_2|_{\tT_H}$ for some $\alpha\in \mathbb{Q}$ and
	\item for some cocharacter $\mu$ of $T_G$, the signs of the integers $\langle \xi_1|\mu\rangle $ and $ \langle \xi_2 | \mu\rangle $ are opposite, that is, $\langle \xi_1|\mu\rangle  \langle \xi_2 | \mu\rangle <0$.  
\end{enumerate}  

If the above conditions hold and $\tH = H$ and $\tG = G$, we will say the map $\mapFromHToG$ is \textit{gluable} for $\bfN$. Finally, we say a representation $\bfN$ of $G$ is \textit{gluable} for $\bfN$ if the identity map $G \to G$ is gluable for $\bfN$.
\end{Definition}

\begin{Remark}
    Observe that a representation $\bfN$ of $G$ is gluable if and only if there are no nonzero weights $\xi_1, \xi_2$ of $\bfN$ such that $\xi_1 = -\alpha\xi_2$ for some \textit{positive} $\alpha \in \mathbb{Q}$. This condition has appeared previously in the literature when $G$ is a torus, see \cite[Lemma 4.18]{ChanLeung3DMirrorSymmetryisMirrorSymmetry}.
\end{Remark}

With this notation, our extension of \cref{Functions on Massive Coulomb Branch as Coproduct of Open Embeddings} can be stated precisely as follows: 

\begin{Theorem}\label{Base Change of Massive Coulomb Branch is Coulomb Branch of Massive Base Changes}
The map \cref{Base Change Coproduct} is an isomorphism if the map of pairs \cref{Map of Pairs for Algebraic Groups} is gluable.  
Moreover, in this case, both of the triples $(\tG, G, \bfN)$ and $(\tH, H, \bfN)$ satisfy the Coulomb gluing property.
\end{Theorem}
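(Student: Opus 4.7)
The plan is to derive both conclusions of \cref{Base Change of Massive Coulomb Branch is Coulomb Branch of Massive Base Changes} via reduction to Teleman's theorem (\cref{Functions on Massive Coulomb Branch as Coproduct of Open Embeddings}), with the gluable hypothesis providing the transversality needed for base change to preserve pushouts in the category of affine schemes.

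First, I would establish the Coulomb gluing property for the triples $(\tG, G, \bfN)$ and $(\tH, H, \bfN)$ individually. Teleman's theorem delivers this property for pairs of the form $(G\times\Gmsc, G)$, and I would extend it to an arbitrary $\tG$ by passing to the enlargement $\tG^+ := \tG\cdot\Gmsc \subset GL(\bfN)$ generated by $\tG$ and the scalars. A direct generalization of Teleman's argument applies to $(\tG^+, G, \bfN)$, using only the presence of the scalar direction supplied by $\Gmsc$. The functoriality of \cref{Functoriality in tG'} applied to $\tG\subset\tG^+$ then exhibits $\tM(G,\bfN)$ as the base change of $\mathcal{M}_C(\tG^+, G, \bfN)$ along the flat (indeed open) embedding $\tcG\hookrightarrow\tcG^+$, and flat base change commutes with the pushout of \cref{Pushout diagram}; this yields the Coulomb gluing property for $(\tG, G, \bfN)$. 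The identical argument applied to $(\tH, H)$ yields the Coulomb gluing property for $(\tH, H, \bfN)$, establishing the second assertion of the theorem.

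Second, I would prove the map \cref{Base Change Coproduct} is an isomorphism. Starting from the pushout description of $\tM(G,\bfN)$ just established, base changing along $\overline{\mapFromHToG}: \tcH\to\tcG$ produces a canonical map from the pushout of the base-changed diagram onto $\tM(G,\bfN)_{\tcH}$, which is precisely the map \cref{Base Change Coproduct}. The task reduces to verifying (a) that this base-changed diagram is still a pushout of affine schemes and (b) that the open subscheme $\tM(G)_{\tcGc \times_{\tcG} \tcH}$ arising from the base change coincides with $\tM(G)_{\tcHc}$ appearing in the statement.

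The gluable hypothesis is designed precisely to ensure (a) and (b). Its conditions rule out exactly those pairs of $\tT_G$-weights whose restrictions to $\tT_H$ are rationally proportional in a manner that would cause unexpected cancellations of zeros and poles in the rational section $\birat_{\bfN}$ upon restriction from $\tT_G$ to $\tT_H$. Such cancellations would enlarge the regular locus of $\birat_{\bfN}$ on $\tM(G)_{\tcH}$ strictly beyond $\tM(G)_{\tcHc}$, breaking both the equality in (b) and the transversality needed in (a). The main obstacle is executing this analysis: one must translate the combinatorial condition on pairs of weights and cocharacters into a geometric statement about the regular locus of $\birat_{\bfN}$ after restriction, verify that the no-cancellation behavior forced by gluability supplies both the required equality of open subsets and the preservation of the pushout under base change, and then conclude via the universal property of pushouts.
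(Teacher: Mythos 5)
There is a genuine gap, and it occurs at the foundation of your argument. You claim that \cref{Functoriality in tG'} exhibits $\tM(G,\bfN)$ as the base change of $\mathcal{M}_C(\tG^+,G,\bfN)$ along a ``flat (indeed open) embedding $\tcG\hookrightarrow\tcG^+$.'' That map is neither open nor flat: for $\tG^+=\tG\times\Gmsc$ one has $\tcG^+\cong\tcG\times\mathbb{C}$ and the embedding is the \emph{closed} inclusion of the zero fibre of the extra flavour direction. Base change along a closed embedding is only right exact, so it does not preserve the equalizer/pushout description of \cref{Pushout diagram}; the failure is measured by a Tor term, and killing that obstruction is exactly what the gluability hypothesis is for. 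Consequently your unconditional derivation of the Coulomb gluing property for $(\tG,G,\bfN)$ and $(\tH,H,\bfN)$ cannot be correct: the property genuinely fails without a hypothesis (take $\tG=G=\Gm$ and $\bfN=\mathbb{C}_1\oplus\mathbb{C}_{-1}$; then for $\lambda=1$ the two Euler classes are $\pm x$, share the factor $x$, and the kernel of the difference map strictly contains the image, so exactness fails). This is precisely why the theorem asserts the gluing property only ``in this case,'' i.e.\ under gluability of $\mapFromHToG$. The paper obtains the ``moreover'' clause in the opposite direction from yours: it first proves the base-change statement and then specializes, noting that gluability of $\mapFromHToG$ implies gluability of the identity map of $(\tG,G)$ (condition (1) for $\tH=\tG$ is stronger) and of the identity of $(\tH,H)$ (a cocharacter of $T_H$ violating (2) pushes forward to one of $T_G$). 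Your enlargement $\tG^+=\tG\cdot\Gmsc$ does appear in the paper, but only in \cref{Adding on Gm To Any Triple Guarantees Gluability}, which is a \emph{corollary} of the theorem asserting the gluing property for the \emph{enlarged} triple $(\tG\times\Gmsc,G,\bfN)$ over the larger base $\tcG\times\mathbb{C}$ --- not for $(\tG,G,\bfN)$ over $\tcG$.

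For the main assertion, you correctly identify that gluability rules out cancellations between the zero and pole divisors of $\birat_{\bfN}$ after restriction, but you stop at the point where the proof actually begins. The paper's mechanism is concrete: the statement is equivalent to exactness of a three-term sequence $0\to\tA(G,\bfN)_{\tcH}\to\tA(G)_{\tcH}\oplus\tA(G)_{\tcH}\to\tA(G)^{\circ}_{\tcH}$; one filters by the Schubert stratification of $\Gr_G$ and reduces to the associated graded, where the $\lambda$-piece is a free rank-one module over $\mathcal{O}(\tc{G_\lambda})$ generated by $[\mathcal{R}_\lambda]$. The two maps are computed explicitly in terms of the Euler classes $\varphi_\ell,\varphi_r$ of $\mathcal{T}^1_\lambda/\mathcal{R}_\lambda$ and $\mathcal{T}^0_\lambda/\mathcal{R}_\lambda$, which are products of linear forms indexed by weights $\mu$ with $\langle\lambda|\mu\rangle<0$, resp.\ $>0$. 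Exactness at the middle and right terms then reduces, via the UFD property of $\mathcal{O}(\tc{G_\lambda})$, to $\varphi_\ell$ and $\varphi_r$ being nonzero and coprime after restriction to $\mathcal{O}(\LtT_{H})$, which is verbatim conditions (1)--(2) of \cref{def:gluable}. Without this computation --- or some substitute for it --- your proposal establishes neither direction of the equivalence between gluability and exactness, so the proof is incomplete even granting a corrected treatment of the base change.
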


Before giving the proof of this result, let us note a couple of consequences. First, observe that if the image of $\iota$ contains the scalar matrices $\Gm\cdot I$, then we can only have $\xi_1=\alpha\xi_2$ as weights of $\tG$ if $\alpha=1$.  In this case, $\langle \xi_1|\mu\rangle = \langle \xi_2 | \mu\rangle$ for any cocharacter $\mu \colon \Gm \to G$, so the inclusion is gluable for $\tH := \tG$.  In particular, \cref{Base Change of Massive Coulomb Branch is Coulomb Branch of Massive Base Changes} gives an extension of Teleman's \cref{Functions on Massive Coulomb Branch as Coproduct of Open Embeddings}.

Moreover, using \cref{Base Change of Massive Coulomb Branch is Coulomb Branch of Massive Base Changes}, one can easily show that if the Coulomb gluing property holds for some triple, it holds after shrinking the gauge group; indeed, the following result follows immediately from the fact that the conditions of \cref{def:gluable} automatically hold for weights if they hold for a smaller group:

\begin{Corollary}\label{Shrinking G}
Assume that $G' \subseteq \tG$ is a connected subgroup containing $G$.  If $(\tG,G',\bfN)$ has the Coulomb gluing property, then $(\tG,G,\bfN)$ does as well. 
\end{Corollary}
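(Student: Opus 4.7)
The plan is to apply \cref{Base Change of Massive Coulomb Branch is Coulomb Branch of Massive Base Changes} to the identity map of pairs $(\tG, G) \to (\tG, G)$, using its ``moreover'' clause to derive the Coulomb gluing property for $(\tG, G, \bfN)$.  To do this, I must verify that this identity map is gluable in the sense of \cref{def:gluable}, which specializes to the nonexistence of a pair of weights $\xi_1, \xi_2$ of $\tT_G$ appearing in $\bfN$ satisfying both $\xi_1 = \alpha \xi_2$ for some $\alpha \in \mathbb{Q}$ and $\langle \xi_1 | \mu \rangle \langle \xi_2 | \mu \rangle < 0$ for some cocharacter $\mu \in X_*(T_G)$.

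The first step is to extract from the assumed Coulomb gluing property of $(\tG, G', \bfN)$ the analogous nonexistence statement with $X_*(T_G)$ replaced by the larger cocharacter lattice $X_*(T_{G'})$.  This amounts to the converse direction of \cref{Base Change of Massive Coulomb Branch is Coulomb Branch of Massive Base Changes}, and I would verify it by tracing through Teleman's proof of \cref{Functions on Massive Coulomb Branch as Coproduct of Open Embeddings}: if a bad weight pair existed for $T_{G'}$, then $\varpi_\bfN$ would introduce sign mismatches in the gluing of functions across the pushout \cref{Pushout diagram}, and this would prevent the ring of functions on the pushout from agreeing with $\mathcal{A}(\tG, G', \bfN)$, violating the assumed Coulomb gluing property.

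The second step is the monotonicity observation highlighted in the hint preceding the corollary.  Since $G \subseteq G'$ implies $T_G \subseteq T_{G'}$ and hence an inclusion of cocharacter lattices $X_*(T_G) \hookrightarrow X_*(T_{G'})$, the existence of a bad cocharacter in the smaller lattice $X_*(T_G)$ would already produce one in the larger lattice $X_*(T_{G'})$.  Therefore, the nonexistence statement for $X_*(T_{G'})$ obtained in the first step implies the corresponding statement for $X_*(T_G)$, completing the verification of gluability for the identity map $(\tG, G) \to (\tG, G)$.  The main obstacle is the first step, namely reversing the direction of \cref{Base Change of Massive Coulomb Branch is Coulomb Branch of Massive Base Changes} to extract a combinatorial condition on weights from a geometric pushout property; this should follow from a careful reexamination of Teleman's proof of \cref{Functions on Massive Coulomb Branch as Coproduct of Open Embeddings}.
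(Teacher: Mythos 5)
Your proposal is correct and takes essentially the same route as the paper: the corollary is precisely the combination of the equivalence between the Coulomb gluing property of a triple and gluability of the identity map of pairs, together with the monotonicity of condition (2) of \cref{def:gluable} under the inclusion of cocharacter lattices $X_*(T_G)\subseteq X_*(T_{G'})$. The ``converse direction'' you flag as the main obstacle is not actually one: the proof of \cref{Base Change of Massive Coulomb Branch is Coulomb Branch of Massive Base Changes} already establishes the biconditional on the associated graded (exactness holds if and only if $\varphi_\ell$ and $\varphi_r$ have no common factor, i.e.\ if and only if no bad weight pair exists), so you may cite that argument directly rather than re-tracing Teleman's proof of \cref{Functions on Massive Coulomb Branch as Coproduct of Open Embeddings}.
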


Finally, applying \cref{Functions on Massive Coulomb Branch as Coproduct of Open Embeddings} for the triple $(\tilde{G} \times \Gmsc, \tilde{G}, \bfN)$ and \cref{Shrinking G}, we obtain that the Coulomb gluing property for any triple can be obtained by extending $\tilde{G}$:

\begin{Corollary}\label{Adding on Gm To Any Triple Guarantees Gluability}
For any triple $(\tG,G,\bfN)$, the induced triple $(\tG\times \Gmsc,G,\bfN)$ has the Coulomb gluing property.
\end{Corollary}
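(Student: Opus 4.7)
The plan is to deduce \cref{Adding on Gm To Any Triple Guarantees Gluability} as a simple combination of the already-established \cref{Functions on Massive Coulomb Branch as Coproduct of Open Embeddings,Shrinking G}. Concretely, I would first apply \cref{Functions on Massive Coulomb Branch as Coproduct of Open Embeddings} to the pair $(\tilde{G} \times \Gmsc, \tilde{G})$ together with the representation $\bfN$, which is legitimate because $\bfN$ extends to $\tilde{G}$ and hence $\tilde{G}$ is a bona fide reductive group with a representation. This immediately yields that the triple $(\tilde{G} \times \Gmsc, \tilde{G}, \bfN)$ satisfies the Coulomb gluing property.

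Next, I would invoke \cref{Shrinking G} with the substitutions $\tG \leadsto \tG \times \Gmsc$ and $G' \leadsto \tilde{G}$. To do so I must verify that $\tilde{G}$, sitting inside $\tilde{G} \times \Gmsc$ via $g \mapsto (g,1)$, is a connected subgroup containing $G$; both properties are immediate from the hypothesis that $G \subseteq \tilde{G}$ is a normal subgroup with $\tilde{G}/G$ a torus (in particular $\tilde{G}$ is connected). \cref{Shrinking G} then produces the Coulomb gluing property for $(\tG \times \Gmsc, G, \bfN)$, which is exactly the claim.

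The only real content to check is the compatibility of the data: one must observe that when we pass from the triple $(\tilde{G} \times \Gmsc, \tilde{G}, \bfN)$ to $(\tilde{G} \times \Gmsc, G, \bfN)$, both the ambient group and the representation are unchanged, so \cref{Shrinking G} genuinely applies without any further verification of weight conditions. Since \cref{Shrinking G} is stated as following ``immediately'' from the observation that the weight conditions in \cref{def:gluable} only become easier for a smaller group, there is no substantive obstacle; the proof is essentially a one-line chain of previously established results.
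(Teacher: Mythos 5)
Your proof is correct and is exactly the argument the paper gives: apply Teleman's theorem (\cref{Functions on Massive Coulomb Branch as Coproduct of Open Embeddings}) to the triple $(\tG \times \Gmsc, \tG, \bfN)$ and then shrink $\tG$ to $G$ via \cref{Shrinking G}. No gaps.
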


Hereafter, we use the notation $\Gr_G$ for the affine Grassmannian for $G$, and use similar notation for any reductive group. When clear from context, we will also omit the subscript from the notation and write $\Gr := \Gr_G$.

\begin{proof}[Proof of \cref{Base Change of Massive Coulomb Branch is Coulomb Branch of Massive Base Changes}]
  Consider the map \[D: \tA(G) \oplus \tA(G) \xrightarrow{} \tA(G)^{\circ}=\mathcal{O}(\tcGc)\otimes_{\mathcal{O}(\tcG)}\tA(G)\] given by the difference of the inclusion map and the inclusion map twisted by the automorphism $\birat_{\bfN}^{\circ}$. The triple $(\tG,G,\bfN)$ has the Coulomb gluing property if and only if we have an exact sequence \begin{equation}\label{SES of Functions for Coulomb Branch}0 \to \tA(G, \bfN) \xrightarrow{i} \tA(G) \oplus \tA(G) \xrightarrow{D} \tA(G)^{\circ}\end{equation} of $\mathcal{O}(\tcG)$-modules. Moreover, the map \cref{Base Change Coproduct} is an isomorphism if and only if the sequence \begin{equation}\label{SES of Functions for Coulomb Branch2}0 \to \tA(G, \bfN)_{\tcH} \xrightarrow{i_{\tcH}} \tA(G)_{\tcH} \oplus \tA(G) _{\tcH}\xrightarrow{D_H} \tA(G)^{\circ}_{\tcH}\end{equation} is exact.  This sequence is filtered by the Schubert stratification of the affine Grassmannian, and is exact if  the induced sequence on associated graded is exact.  
  
  On $\Gr$, we have the trivial bundle $\mathcal{T}^0=\operatorname{Gr}\times \bfN(\mathcal{O})$ and the balanced product  \[\mathcal{T}^1=G(\mathcal{K})\times_{G(\mathcal{O})} \bfN(\mathcal{O})=\{(gG(\mathcal{O}),n(t))\in \operatorname{Gr}\times \bfN(\mathcal{K})\mid g^{-1}n(t)\in \bfN(\mathcal{O})\}.\]
    We can view $\mathcal{T}^0$ and $\mathcal{T}^1$ as subbundles of $\operatorname{Gr}\times \bfN(\mathcal{K})$, and define the BFN space $\mathcal{R}$ as their intersection.  
    Let $\lambda$ be a dominant coweight, $\operatorname{Gr}^{\lambda}$ be the $G(\mathcal{O})$-orbit containing $t^{\lambda}$ and $\mathcal{R}_{\lambda},\mathcal{T}^0_{\lambda}$ and $\mathcal{T}^1_{\lambda}$ the preimages in the relevant bundles.  By \cite[Lemma 2.2]{BravermanFinkelbergNakajimaTowardsaMathematicalDefinitionOfCoulombBranchesOf3dNEquals4GaugeTheoriesII},  the space $\mathcal{R}_{\lambda}$ is a bundle with finite codimension in $\mathcal{T}^i_{\lambda}$.  We will use below the  Euler classes
    \[\varphi_\ell=e(\mathcal{T}^1_{\lambda}/\mathcal{R}_{\lambda})\qquad \varphi_r=e(\mathcal{T}^0_{\lambda}/\mathcal{R}_{\lambda}) \]
    These are both products of linear factors in $\mathcal{O}(\LtT)$ corresponding to the weights $\mu$ where $\lambda$ has negative/positive value, respectively, with multiplicity $|\langle\lambda |\mu\rangle|$ (see, for example, \cite[\S 6.3]{TelemanTheRoleofCoulombBranchesin2DTheory}). Our conditions (1) and (2) above exactly guarantee that these have no common factors after restriction to $\mathcal{O}(\LtT_{H})$. 
    
    As discussed in \cite[\S 6(i)]{BravermanFinkelbergNakajimaTowardsaMathematicalDefinitionOfCoulombBranchesOf3dNEquals4GaugeTheoriesII}, component of the associated graded for $\lambda$ is a free module over $\mathcal{O}(\tc{G_{\lambda}})$, for the Levi subgroup $G_{\lambda}$ centralizing $\lambda$, generated by the class $[\mathcal{R}_{\lambda}]$. We have 
    \begin{equation}\label{Explicit Description of SES Maps}i([\mathcal{R}_{\lambda}])= (\varphi_\ell [\operatorname{Gr}^{\lambda}],\varphi_r[\operatorname{Gr}^{\lambda}]) \qquad D(f,g)=f-\frac{\varphi_\ell}{\varphi_r}g\end{equation} by the argument in \cite[\S 6.2]{TelemanTheRoleofCoulombBranchesin2DTheory}. From this, we see that $i$ is injective since $\varphi_r$ and $\varphi_\ell$ are non-zero. We moreover claim that $i_{\tcH}$ is injective if and only if there are no pairs of weights $\xi_1, \xi_2$ such that for some cocharacter $\mu$ of $T_G$, $\langle \xi_1|\mu\rangle  \langle \xi_2 | \mu\rangle <0$ and $\xi_1|_{\tH} = \xi_2|_{\tH} = 0$. Indeed, as the map on associated graded is a map from a free rank one $\mathcal{O}(\mathfrak{c}_{G_{\lambda}})$-module into a free rank two $\mathcal{O}(\mathfrak{c}_{G_{\lambda}})$-module, and the injectivity of this map is equivalent to the image of a generator being nonzero, and so the injectivity of $i_{\tcH}$ is equivalent to the condition that at least one of $\varphi_{\ell}$ and $\varphi_{r}$ is not in the kernel of $\mathcal{O}(\tcG) \to \mathcal{O}(\tcH)$.

    From \cref{Explicit Description of SES Maps}, we also see that the kernel of $D$ is the submodule of pairs $(f,g)$ such that $\varphi_rf=\varphi_\ell g$. Since $\mathcal{O}(\tc{G_{\lambda}})$ is a UFD, this is exactly the image of $i$ if and only if $\varphi_\ell$ and $\varphi_r$ are non-zero and have no common factors in $\mathcal{O}(\LtT_{H})$. Thus, if the sequence \cref{SES of Functions for Coulomb Branch2} is exact, we see that the conditions of (1) and (2) above are satisfied and, conversely, if these conditions are satisfied, then the sequence \cref{SES of Functions for Coulomb Branch2} is exact in the third term by our above analysis and exact in the second term since no pair of weights which satisfies (2) can restrict to zero as this would violate (1). This shows our first claim.

    We now show that these equivalent conditions imply the Coulomb branch gluing property for these triples. For the triple $(\tG, G, \bfN)$, we observe that condition (1) above is weaker than the condition that $\xi_1=\alpha \xi_2$ for $\alpha\in \mathbb{Q}$, so conditions (1-2) also hold if we set $\tH=\tG$; we therefore obtain that the map \cref{Base Change Coproduct} is an isomorphism for $\tH := \tG$, which is exactly the Coulomb gluing property. Finally, for the triple $(\tH, H, \bfN)$, we observe that condition (2) is weaker when $T_G$ is replaced with the maximal torus $T_H$: indeed, if $\xi_1 \circ \mapFromHToG = \alpha(\xi_2 \circ \mapFromHToG)$ for some $\alpha \in \mathbb{Q}$ and there is a cocharacter $\mu_H: \mathbb{G}_m \to T_H$ for which $\langle \xi_1|\mu_H\rangle  \langle \xi_2 | \mu_H\rangle <0$, then we immediately see that \[\langle \xi_1|\mapFromHToG\circ \mu_H\rangle  \langle \xi_2| \mapFromHToG \circ \mu_H\rangle = \langle \xi_1 \circ \mapFromHToG|\mu_H\rangle  \langle \xi_2\circ \mapFromHToG|\mu_H\rangle <0\] and so condition (2) does not hold. By our above analysis for the case $\tH = \tG$ and $H = G$, we see that the Coulomb gluing property holds for $(\tH, H, \bfN)$.
\end{proof}
\begin{Remark}\label{K-theory 2}
    The same result holds for $K$-theoretic Coulomb branches;  the only change is that the factors $\varphi_\ell,\varphi_r$ are now taken to be Euler classes in K-theory 
    \[\varphi_\ell=\prod_{\langle \lambda|\mu\rangle<0}(1-\mu^{-1})^{|\langle \lambda|\mu\rangle|}\qquad \varphi_r=\prod_{\langle \lambda|\mu\rangle>0}(1-\mu^{-1})^{|\langle \lambda|\mu\rangle|}\]
    Condition (2) again guarantees that these are coprime as elements of $\mathcal{O}(T)$, so the proofs of \cref{Base Change of Massive Coulomb Branch is Coulomb Branch of Massive Base Changes,Shrinking G,Adding on Gm To Any Triple Guarantees Gluability} carry through unchanged in the K-theoretic case as well.
\end{Remark}

\subsection{Action for general Coulomb branches}\label{Action for General Coulomb Branches}
Recall 
a result of Teleman \cite[Theorem 1(iv)]{TelemanCoulombBranchesforQuaternionicRepresentations} which says that the group scheme $\M(G)$ acts on $\mathcal{M}_C(G, \bfN)$ as a scheme over $\cG$. As with \cref{Functions on Massive Coulomb Branch as Coproduct of Open Embeddings}, we will extend this to the deformed case.

\begin{Corollary}\label{Action on General Coulomb Branch by Gluing}
    The action of $\tM(G)$ on itself lifts to an action of $\tM(G)$ on $\tM(G, \mathbf{N})$. More precisely, there is an induced map making the following diagram 
\[\begin{tikzcd}
	{\tM(G) \times_{\tcG}\tM(G)} & {\tM(G) } \\
	{\tM(G)  \times_{\tcG}\tM(G, \mathbf{N})} & {\tM(G, \mathbf{N})  }
	\arrow["{\mathrm{mult}}", from=1-1, to=1-2]
	\arrow["{\mathrm{id} \times \jmath_{\mathbf{N}}}", from=1-1, to=2-1]
	\arrow["{\jmath_{\mathbf{N}}}"', from=1-2, to=2-2]
	\arrow["\exists", dashed, from=2-1, to=2-2]
\end{tikzcd}\] commute, and this map gives rise to an action of $\M(G)$ on $\mathcal{M}(G, \mathbf{N})$. 
\end{Corollary}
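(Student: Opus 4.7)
The plan is to present $\tM(G, \bfN)$ as a pushout via the Coulomb gluing property and then transport the self-action of $\tM(G)$ across this pushout. Since the gluing property may fail for the original triple $(\tG, G, \bfN)$, we first enlarge to $\tG^+ := \tG \times \Gmsc$---for which \cref{Adding on Gm To Any Triple Guarantees Gluability} guarantees the gluing property---and use \cref{Functoriality in tG'} applied to the pair $\tG \subset \tG^+$ to identify $\tM(G) \cong \mathcal{M}_C(\tG^+, G, 0)_{\tcG}$ and $\tM(G, \bfN) \cong \mathcal{M}_C(\tG^+, G, \bfN)_{\tcG}$ via the zero-section inclusion $\tcG \hookrightarrow \mathrm{Lie}(\tG^+)\sslash \tG^+ \cong \tcG \times \mathbb{A}^1$. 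It therefore suffices to build an action of $\mathcal{M}_C(\tG^+, G, 0)$ on $\mathcal{M}_C(\tG^+, G, \bfN)$ lifting the group multiplication and pull it back along the zero section.

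At the $\tG^+$-level, the Coulomb gluing property presents $\mathcal{M}_C(\tG^+, G, \bfN)$ as the pushout in affine schemes of the analog of \cref{Pushout diagram}. The group scheme $\mathcal{M}_C(\tG^+, G, 0)$ acts on itself by left multiplication, and---as noted in the discussion preceding \cref{Coulomb Gluing Property Algebraically}---the rational map $\birat_{\bfN}^{\circ}$ commutes with this action, so both morphisms in the pushout square are equivariant. The key technical observation is that $\mathcal{M}_C(\tG^+, G, 0) \to \mathrm{Lie}(\tG^+)\sslash \tG^+$ is flat (via the isomorphism $\mathcal{M}_C(\tG^+, G, 0) \cong \M(G) \times \mathrm{Lie}(\tG^+/G)$ and \cref{Flat Map for MCtildeG} applied with $\bfN = 0$), so the functor of fiber product with $\mathcal{M}_C(\tG^+, G, 0)$ over the base preserves pushouts of affine schemes---dually, tensoring by a flat module preserves equalizers of rings. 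The universal property of the pushout then yields the required action map on $\mathcal{M}_C(\tG^+, G, \bfN)$.

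Base changing along the zero-section inclusion produces the map in the statement. Commutativity of the displayed square is automatic: $\jmath_{\bfN}$ is one of the two structural maps of the pushout, and the action restricted along it is by construction multiplication in $\tM(G)$. The action axioms (associativity and unit) are verified by the same universal-property argument applied to the diagrams encoding them; each diagram is checked piece-by-piece on the pushout, where it reduces to the corresponding axiom for the multiplication on $\tM(G)$ acting on itself (with a second use of flatness to commute triple fiber products with the pushout). Finally, restricting to the zero fiber over $\LF$ and invoking $\mathcal{M}(G, \bfN) \cong \tM(G, \bfN)_{\cG}$ from \cref{Functoriality in tG'} gives the asserted action of $\M(G)$ on $\mathcal{M}(G, \bfN)$. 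The principal technical point throughout is the preservation of pushouts of affine schemes under flat base change, which amounts to routine commutative algebra.
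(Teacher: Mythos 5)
Your proposal is correct and follows essentially the same route as the paper: reduce to the Coulomb-gluable case by enlarging the flavor group to $\tG\times\Gmsc$ (\cref{Adding on Gm To Any Triple Guarantees Gluability}) and descending back via \cref{Functoriality in tG'}, present $\tM(G,\bfN)$ as the pushout of \cref{Pushout diagram}, use flatness of $\tM(G)\to\tcG$ to commute the fiber product with the pushout, and define the action on each pushout leg by multiplication, with well-definedness coming from the equivariance of $\birat^{\circ}_{\bfN}$. Your write-up spells out the universal-property and action-axiom checks that the paper leaves implicit, but the underlying argument is the same.
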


\begin{proof}[Proof of \cref{Action on General Coulomb Branch by Gluing}]
If the result holds for $\tG$, then by \cref{Functoriality in tG'}, it holds for any smaller $\tG'\subset \tG$ that satisfies the same conditions. Therefore, by \cref{Adding on Gm To Any Triple Guarantees Gluability}, extending $\tG$ if necessary we assume that the triple $(\tG,G,\bfN)$ has the Coulomb gluing property.  

We have a canonical isomorphism \[(\tM(G) \times_{\tcG} \tM(G)) \underset{\tM(G)_{\tcGc} \times_{_{\tcGc}} \tM(G)_{\tcGc}}{\coprod} (\tM(G) \times_{\tcG} \tM(G)) \xrightarrow{\sim} \tM(G) \times_{\tcG} \tM(G, \mathbf{N})\] from \cref{Functions on Massive Coulomb Branch as Coproduct of Open Embeddings} and the flatness of $\tM(G)$ over $\tcG$. Under this isomorphism, the action is given by the coproduct of the multiplication map for the group $\tM(G)$ with itself--since the map $\birat^{\circ}_{\mathbf{N}}$ is equivariant with respect to this action, this map is well defined. 
\end{proof}


\subsection{Construction of the morphism}\label{Construction of Morphism for General Coulomb Branches Subsection}
In all of \cref{Construction of Morphism for General Coulomb Branches Subsection}, we assume that the map of pairs \cref{Map of Pairs for Algebraic Groups} is gluable for $\bfN$. 
Recall that $\mapFromHToG$ induces a map on affine Grassmannians. Using the fact that the functions on the Coulomb branch with $\bfN = 0$ is given by the Borel-Moore homology of the affine Grassmannian, one can construct a morphism $\M_C(\tG)_{\tcH} \to \M_C(\tH)$ of group schemes over $\tcH$; taking the map induced by the quotients by the respective flavor tori, we obtain a morphism \[\mapFromBaseChangedCoulombBranchforGtoCoulombBranchforH: \tM(G)_{\tcH} \to \tM(H)\] of group schemes over $\tcH$. 
Note that this does not depend on $H\hookrightarrow G$ being injective---if we consider a surjective map $G\to Q$, we obtain an injective map $\tM(Q)_{\tcG}\to \tM(G).$

In this section, we will generalize this construction to define a map \[\tilde{f}'_{\mapFromHToG, \mathbf{N}}: \tM(G, \mathbf{N})_{\tcH} \to \tM(H, \mathbf{N})\] which is $\tM(G)$-equivariant. 
To construct such a map, it suffices to construct a map from the domain of \cref{Base Change Coproduct}. Informally, our desired map can be described as gluing two copies of $\mapFromBaseChangedCoulombBranchforGtoCoulombBranchforH$. 
Since $\mapFromBaseChangedCoulombBranchforGtoCoulombBranchforH$ intertwines the respective automorphisms $\birat^{\circ}_{\mathbf{N}}$ for the groups $\tG$ and $\tH$, there is a well defined morphism \begin{equation}\label{Actual Content of Morphism}\mapFromBaseChangedCoulombBranchforGtoCoulombBranchforH \underset{\mapFromBaseChangedCoulombBranchforGtoCoulombBranchforH}{\coprod} \mapFromBaseChangedCoulombBranchforGtoCoulombBranchforH: \tM(G)_{\tcH} \underset{\tM(G)_{\tcHc}}{\coprod} \tM(G)_{\tcH} \to \tM(H) \underset{\tM(H)_{\tcHc}}{\coprod}\tM(H) \end{equation} and, since the canonical map \[\tM(H) \underset{\tM(H)_{\tcHc}}{\coprod}\tM(H) \xrightarrow{} \tM(H, \mathbf{N})\] is an isomorphism by \cref{Base Change of Massive Coulomb Branch is Coulomb Branch of Massive Base Changes}, we obtain a morphism \[\tilde{f}'_{\mapFromHToG, \mathbf{N}}: \tM(G, \mathbf{N})_{\tcH} \to \tM(H, \mathbf{N})\] which, by construction, is $\tM(G)_{\tcH}$-equivariant.

Using the $\tM(H)$-action on the codomain, we let \begin{equation}\label{Definition of Tildef}\tilde{f}_{\mapFromHToG, \mathbf{N}}: \tM(H) \times_{\tcG}^{\tM(G)} \tM(G, \mathbf{N}) \cong \tM(H) \times_{\tcH}^{\tM(G)_{\tcH}} \tM(G, \mathbf{N})_{\tcH} \to \tM(H, \mathbf{N})\end{equation} denote the induced map. We claim this map is an isomorphism:

 \begin{Theorem} \label{Restriction of Groups is Hamiltonian Reduction}
If the pair \cref{Map of Pairs for Algebraic Groups} is gluable, the map $\tilde{f}_{\mapFromHToG, \mathbf{N}}$ is an isomorphism. 
\end{Theorem}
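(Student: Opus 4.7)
The plan is to deduce this from the gluing results already in place. Under the gluability hypothesis, \cref{Base Change of Massive Coulomb Branch is Coulomb Branch of Massive Base Changes} asserts that both triples $(\tG, G, \bfN)$ and $(\tH, H, \bfN)$ satisfy the Coulomb gluing property; equivalently, $\tM(G, \bfN)_{\tcH}$ is the pushout \cref{Base Change Coproduct}, and $\tM(H, \bfN) \cong \tM(H) \coprod_{\tM(H)_{\tcHc}} \tM(H)$.

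My first step is to apply the balanced product $\tM(H) \times_{\tcH}^{\tM(G)_{\tcH}} (-)$ to the pushout description of $\tM(G, \bfN)_{\tcH}$ and to verify that this operation commutes with the pushout. At the level of functions, the pushout corresponds to the short exact sequence \cref{SES of Functions for Coulomb Branch2} of $\mathcal{O}(\tcH)$-modules, in which the rightmost module is obtained from the middle one by localization at the open $\tcHc \subset \tcH$. Since localization is flat, tensoring over $\mathcal{O}(\tcH)$ with $\mathcal{O}(\tM(H))$ preserves exactness; and the subsequent passage to $\mathcal{O}(\tM(G)_{\tcH})$-equivariant functions (which cuts out the balanced product from the naive fiber product) also commutes with flat base change on $\tcH$.

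Next, I would evaluate each of the three resulting terms using cancellation for the free right action of the group scheme $\tM(G)_{\tcH}$ on itself, yielding $\tM(H) \times_{\tcH}^{\tM(G)_{\tcH}} \tM(G)_{\tcH} \cong \tM(H)$ and $\tM(H) \times_{\tcH}^{\tM(G)_{\tcH}} \tM(G)_{\tcHc} \cong \tM(H)_{\tcHc}$. Combining with the previous step,
\[
\tM(H) \times_{\tcH}^{\tM(G)_{\tcH}} \tM(G, \bfN)_{\tcH}
\;\cong\;
\tM(H) \underset{\tM(H)_{\tcHc}}{\coprod} \tM(H)
\;\cong\;
\tM(H, \bfN),
\]
where the last isomorphism applies the Coulomb gluing property for $(\tH, H, \bfN)$. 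Essential here is the fact, used already in constructing $\tilde{f}'_{\mapFromHToG, \bfN}$, that $\mapFromBaseChangedCoulombBranchforGtoCoulombBranchforH$ intertwines the rational sections $\birat^{\circ}_{\bfN}$ for $\tG$ and $\tH$, so that the twist appearing in the $\tG$-side pushout matches exactly the twist in the $\tH$-side pushout.

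Finally, I would verify that the composite isomorphism above coincides with the morphism $\tilde{f}_{\mapFromHToG, \bfN}$ from \cref{Definition of Tildef}. This is essentially tautological: $\tilde{f}_{\mapFromHToG, \bfN}$ was itself defined by applying $\mapFromBaseChangedCoulombBranchforGtoCoulombBranchforH$ componentwise to the two copies of $\tM(G)_{\tcH}$ in the pushout \cref{Base Change Coproduct} and then using the $\tM(H)$-action, which is exactly the data packaged by the cancellation isomorphisms above. The principal technical hurdle is the first step, namely the commutativity of the balanced product with the pushout; the saving grace is that the pushout takes the very restricted form of a Zariski gluing along an open immersion of the base $\tcH$, so that invariants under the flat group-scheme action of $\tM(G)_{\tcH}$ remain well-behaved.
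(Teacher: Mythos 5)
Your proposal is correct and follows essentially the same route as the paper's proof: use flatness of $\tM(H)\to\tcH$ to commute the balanced product past the pushout \cref{Base Change Coproduct}, identify each resulting term via cancellation for the $\tM(G)_{\tcH}$-torsor $\tM(G)_{\tcH}$, and then invoke the Coulomb gluing property for $(\tH,H,\bfN)$ supplied by \cref{Base Change of Massive Coulomb Branch is Coulomb Branch of Massive Base Changes}. The only cosmetic difference is that where you justify commuting the passage to invariants with the gluing via flatness/exactness of localization, the paper simply notes that both operations are colimits of affine schemes and colimits commute with colimits.
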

Note that the case of the undeformed Coulomb branch \[f_{\mapFromHToG, \mathbf{N}}: \mathcal{M}(H) \times_{\cG}^{\M(G)}\mathcal{M}_C(G, \mathbf{N}) \cong  \mathcal{M}(H) \times_{\cH}^{\M(G)_{\cH}}\mathcal{M}_C(G, \mathbf{N})_{\cH}\to \mathcal{M}(H, \mathbf{N})\] 
is covered by this result; it is just the case where $\tG=G$ and $\tH=H$. In particular, \cref{Intro Statement of Functoriality Theorem} is a special case of \cref{Restriction of Groups is Hamiltonian Reduction}.

\begin{proof}[Proof of \cref{Restriction of Groups is Hamiltonian Reduction}]

To prove $\tilde{f}_{\mapFromHToG, \mathbf{N}}$ is an isomorphism, it suffices to show that $\mapFromBaseChangedCoulombBranchforGtoCoulombBranchforH \underset{\mapFromBaseChangedCoulombBranchforGtoCoulombBranchforH}{\coprod} \mapFromBaseChangedCoulombBranchforGtoCoulombBranchforH$ in \cref{Actual Content of Morphism} is an isomorphism. Since $\tM(H) \to \tcH$ is flat, we have an isomorphism \[(\tM(H) \times_{\tcH}\tM(G)_{\tcH})\underset{\tM(H)_{\tcHc}\times_{\tcHc}\tM(G)_{\tcHc}}{\coprod} (\tM(H) \times_{\tcH}\tM(G)_{\tcH}) \xrightarrow{\sim}  \tM(H) \times_{\tcH}(\tM(G)_{\tcH} \underset{\tM(G)_{\tcHc}}{\coprod} \tM(G)_{\tcH})\] and, after pulling back by this isomorphism, we see that the map induced by $\mapFromBaseChangedCoulombBranchforGtoCoulombBranchforH \underset{\mapFromBaseChangedCoulombBranchforGtoCoulombBranchforH}{\coprod} \mapFromBaseChangedCoulombBranchforGtoCoulombBranchforH$ and the $\tM(H)$-action is given by $\mapFromBaseChangedCoulombBranchforGtoCoulombBranchforH' \underset{\mapFromBaseChangedCoulombBranchforGtoCoulombBranchforH'}{\coprod} \mapFromBaseChangedCoulombBranchforGtoCoulombBranchforH'$ where $\mapFromBaseChangedCoulombBranchforGtoCoulombBranchforH'(h_1, g_1) := h_1\mapFromBaseChangedCoulombBranchforGtoCoulombBranchforH(g_1)$. Quotienting by $\tM(G)$ and using the fact that colimits commute with colimits, the fact that $\mapFromBaseChangedCoulombBranchforGtoCoulombBranchforH' \underset{\mapFromBaseChangedCoulombBranchforGtoCoulombBranchforH'}{\coprod} \mapFromBaseChangedCoulombBranchforGtoCoulombBranchforH'$ is an isomorphism immediately follows from the fact that the map \[\tM(H) \times_{\tcH}^{\tM(G)_{\tcH}}\tM(G)_{\tcH} \xrightarrow{} \tM(H)\] induced by $\mapFromBaseChangedCoulombBranchforGtoCoulombBranchforH'$ is an isomorphism. This proves that $\tilde{f}_{\mapFromHToG, \bfN}$ is an isomorphism, as desired.
\end{proof}
\section{Consequences for Coulomb Branches of Quiver Gauge Theories}\label{Consequences for Coulomb Branches of Quiver Gauge Theories}
\subsection{Proof of \cref{gluing for quiver case}}\label{Proof of Gluing for Quiver Case Subsection}We now prove \cref{gluing for quiver case}. Since changing the orientation of any edge of the quiver leaves the Coulomb branch unchanged, \cref{gluing for quiver case} will follow if, for some choice of orientation, the inclusion $\GL(V_Q) \to \GL(V_{\check{Q}})$ is gluable. We now show this:
\begin{Lemma}\label{lem:gluable quiver}
The map $\GL(V_Q) \to \GL(V_{\check{Q}})$ is gluable if $Q$ has no loops and all pairs of parallel edges in $Q$ have the same orientation and lift to parallel edges in $\check{Q}$.
\end{Lemma}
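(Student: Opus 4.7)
The plan is to directly verify \cref{def:gluable} by enumerating the weights of $\bfN_Q$ and checking that no pair of weights satisfies both its conditions (1) and (2). First I would fix notation: the maximal torus of $\GL(V_{\check{Q}}) = \prod_{j \in \check{Q}_0} \GL(V_j)$ has standard characters $\epsilon_a^{(j)}$, and for each edge $e: j \to j'$ in $\check{Q}_1$ and each pair $(a,b)$ the corresponding weight of $\bfN_{\check{Q}}$ is $\xi_{e,a,b} = \epsilon_b^{(j')} - \epsilon_a^{(j)}$. Because the inclusion \cref{Inclusion Map by Dismemberment} sends $\GL(V_i)$ diagonally into $\prod_{\gamma(j) = i} \GL(V_j)$, restriction to the maximal torus of $\GL(V_Q)$ identifies $\epsilon_a^{(j)}$ with $\epsilon_a^{(\gamma(j))}$.

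Next I would rule out the easy degenerate situations. The no-loops hypothesis on $Q$ ensures that $\xi_{e,a,b}|_H \neq 0$ for every weight, since vanishing would force $\gamma(j) = \gamma(j')$; in particular condition (1) with $\alpha = 0$ never holds. Moreover, since $\xi_1|_H$ and $\xi_2|_H$ are both differences of two distinct $\epsilon_a^{(i)}$'s with coefficients in $\{+1,-1\}$ and the $\epsilon_a^{(i)}$'s are linearly independent in the character lattice, the only rational scalars $\alpha$ for which $\xi_1|_H = \alpha \xi_2|_H$ is possible are $\alpha = \pm 1$.

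I would then split into these two cases. When $\alpha = 1$, matching coefficients forces $\gamma(j_1) = \gamma(j_2)$, $\gamma(j_1') = \gamma(j_2')$, $a_1 = a_2$, and $b_1 = b_2$; so $\gamma(e_1)$ and $\gamma(e_2)$ are parallel edges of $Q$ with the same orientation, and the parallel-lifting hypothesis on $\check Q$ combined with the no-loops hypothesis (which distinguishes $j_1$ from $j_1'$ via their images under $\gamma$) forces $j_1 = j_2$ and $j_1' = j_2'$, hence $\xi_1 = \xi_2$. For such a pair condition (2) fails trivially since $\langle \xi_1 | \mu \rangle \langle \xi_2 | \mu \rangle = \langle \xi_1 | \mu \rangle^2 \ge 0$. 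When $\alpha = -1$, matching coefficients instead gives $\gamma(j_1) = \gamma(j_2')$ and $\gamma(j_1') = \gamma(j_2)$, so $\gamma(e_1)$ and $\gamma(e_2)$ are parallel edges of $Q$ with opposite orientations.

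The main obstacle will be dispatching this last $\alpha = -1$ case, since it is precisely here that the same-orientation hypothesis on $Q$ does real work. I would argue that if $\gamma(e_1) \neq \gamma(e_2)$, the conclusion of the previous paragraph directly contradicts the same-orientation hypothesis, while if $\gamma(e_1) = \gamma(e_2)$ then $e_1 = e_2$ (since $\gamma$ is a bijection on edges), and combined with $\gamma(j_1) = \gamma(j_2') = \gamma(j_1')$ this forces $\gamma(e_1)$ to be a loop of $Q$, again excluded. This exhausts the possibilities and shows that no pair of weights satisfies both (1) and (2), so the inclusion is gluable.
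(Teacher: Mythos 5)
Your proof is correct and follows essentially the same route as the paper's: identify each weight of $\bfN_{\check Q}$ with an edge of $\check Q$ together with a choice of matrix indices, observe that proportionality of restrictions to the torus of $\GL(V_Q)$ forces the edges to become parallel in $Q$, and then use the same-orientation and parallel-lifting hypotheses to conclude the two weights already coincide as weights of $\GL(V_{\check Q})$, so condition (2) of \cref{def:gluable} cannot hold. Your explicit case analysis of $\alpha=\pm1$ (and the bijectivity of $\gamma$ on edges in the $\alpha=-1$ case) just spells out details the paper's argument leaves implicit.
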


\begin{proof}
Observe that since there are no edge loops, every weight of $V_{\check{Q}}$ is completely determined by two distinct vertices and an edge $v \to v'$ between them, in which case the weight has the form \[D_{\check{Q}} \to \mathbb{G}_m^{n_v} \times \mathbb{G}^{n_{v'}}_m \xrightarrow{(D, D') \mapsto D_{ii}^{-1}D_{jj}'} \mathbb{G}_m\] for some $i \in \{1, 2, ..., n_v\}$ and $j \in \{1, 2, ..., n_{v'}\}$.

If two weights satisfy (1) in \cref{def:gluable}, that is, are multiples of each other, they must correspond to parallel edges and the same choice of $i$ and $j$.  By the assumption that parallel edges always have the same orientation, these weights are equal.  By the assumption that parallel edges lift to parallel edges, the corresponding weights for $\GL(V_{\check{Q}})$ are also equal, and thus cannot satisfy (2).
\end{proof}
\subsection{Fission of quiver gauge theories}\label{Explosions of quiver gauge theories subsection} There is another operation on quivers whose effect on the associated Coulomb branches we can naturally describe using \cref{Intro Statement of Functoriality Theorem} and whose effect on the associated Higgs branches has been previously studied by Boalch \cite{BoalchSimplyLacedIsomonodromySystems}, \cite{BoalchIrregularConnectionsandKacMoodyRootSystems}: namely, the \textit{fission} (or \textit{explosion}) of vertices.  For a quiver and dimension vector $(Q,\mathbf{n})$, let $\ell_i$ be a second assignment of integers $\geq 1$ to the vertices $Q_0$. The fission $Q^{\circledast,\boldsymbol{\ell}}$ is the quiver obtained by replacing each vertex $i$ by $\ell_i$ vertices, and for each edge $i\to j$, adding an edge from every preimage of $i$ to a preimage of $j$.  In particular, if we begin with a single edge $i\to j$, and choose $(\ell_i,\ell_j)$, then $Q^{\circledast,\boldsymbol{\ell}}$ is the complete bipartite graph for $(\ell_i,\ell_j)$.  A fission of the dimension vector $\mathbf{n}$ is a dimension vector $\mathbf{n}^{\circledast}$ on $Q^{\circledast,\boldsymbol{\ell}}$ such that, for every $i \in Q_0$, $\mathbf{n}(i) = \sum_{i'}\mathbf{n}^{\circledast}(i')$ where this sum varies over the preimages of $i$. For example,

\begin{equation*}
\begin{tikzpicture}
    \tikzstyle{gaugered} = [circle, draw=black, inner sep=1pt, minimum size=6pt, label distance=1.5pt]

    \tikzstyle{gaugeblue} = [circle, draw=black, inner sep=1pt, minimum size=6pt, label distance=1.5pt]
    \tikzstyle{gaugepurple} = [circle, draw=black, inner sep=1pt, minimum size=6pt, label distance=1.5pt]
    \tikzstyle{gauge} = [circle, draw, inner sep=1pt, minimum size=6pt, label distance=1.5pt]
    
    \node[] (0) at (-1,0) {$A_n :=$ };
    \node[gaugered, label=below:{$1$}] (1) at (0,0) {};
    \node[gaugered, label=below:{$2$}] (2) at (1,0) {};
    \node (3) at (2,0) {$\cdots$};
    \node[gaugered, label=below:{$n - 1$}] (4) at (3,0) {};
    \node[gaugepurple, label=below:{$n$}] (5) at (4,0) {};

    \node[] (0) at (5.5,0) {$\implies$ };
    \draw[->] (1)--(2);
    \draw[->](2)--(3);
    \draw[->](3)--(4);
    \draw[->](4)--(5);   
    
    \node[] (07) at (7.2,0) {$A_n^{\circledast,\boldsymbol{\ell}} =$ };
    \node[gauge, label=below:{$1$}] (17) at (8,0) {};
    \node[gauge, label=below:{$2$}] (27) at (9,0) {};
    \node (37) at (10,0) {$\cdots$};
    \node[gauge, label=below:{$n - 1$}] (47) at (11,0) {};
    

        \draw[->] (17)--(27);
    \draw[->](27)--(37);
    \draw[->](37)--(47);

    \node[gauge, label=above:{$m_1$}] (67u) at (12,0.7) {};
    \node at (12,0.4) {$\vdots$};
    \node[gauge] (67m) at (12,-0.1) {};
    \node[gauge, label=below:{$m_l$}] (67d) at (12,-0.5) {};
    \draw[->] (47)--(67u);
    \draw[->] (47)--(67m);
    \draw[->] (47)--(67d);
\end{tikzpicture} 
\end{equation*} gives a fission of the quiver $A_n$ with a fissioned dimension vector; this fissioned quiver has been studied in \cite{DancerHananyKirwanSymplecticDualityandImplosions}, \cite{BoalchIrregularConnectionsandKacMoodyRootSystems}, and  \cite{GannonWilliamsDifferentialOperatorsOnBaseAffineSpaceofSLnandQuantizedCoulombBranches}.
As a consequence of our above analysis, we obtain that the Coulomb branch of any quiver $Q$ without loops determines the Coulomb branch for any fission of $Q$:

\begin{Corollary}\label{Pulled Back Quiver By Node Corollary}
If $Q$ has no loops, then there is an isomorphism\[\M(Q^{\circledast,\boldsymbol{\ell}}, \mathbf{n}^{\circledast})\cong\M(G(Q^{\circledast,\boldsymbol{\ell}}, \mathbf{n}^{\circledast}))\times_{\mathfrak{c}_{G(Q,\mathbf{n})}}^{\M(G(Q,\mathbf{n}))}\M(Q, \mathbf{n})\] of affine schemes which is $\M(G(Q^{\circledast,\boldsymbol{\ell}}, \mathbf{n}^{\circledast}))$-equivariant.
\end{Corollary}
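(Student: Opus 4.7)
The plan is to apply \cref{Intro Statement of Functoriality Theorem} (equivalently \cref{Restriction of Groups is Hamiltonian Reduction}) to the block-diagonal inclusion \[H := G(Q^{\circledast,\boldsymbol{\ell}}, \mathbf{n}^{\circledast}) \hookrightarrow G(Q,\mathbf{n}) =: G\] which, at each vertex $v \in Q_0$, embeds $\prod_{v' \mapsto v}\GL_{\mathbf{n}^{\circledast}(v')}$ into $\GL_{n_v}$ as block-diagonal matrices, via the decomposition $\mathbb{C}^{n_v} = \bigoplus_{v' \mapsto v}\mathbb{C}^{\mathbf{n}^{\circledast}(v')}$ coming from the definition of an explosion of a dimension vector. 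For each edge $i \to j$ of $Q$, the canonical identification \[\Hom(\mathbb{C}^{n_i},\mathbb{C}^{n_j}) = \bigoplus_{i' \mapsto i,\; j' \mapsto j}\Hom(\mathbb{C}^{\mathbf{n}^{\circledast}(i')},\mathbb{C}^{\mathbf{n}^{\circledast}(j')})\] exhibits the restriction of $\mathbf{N}_Q$ to $H$ as the representation $\mathbf{N}_{Q^{\circledast,\boldsymbol{\ell}}}$, since the edges of $Q^{\circledast,\boldsymbol{\ell}}$ lying over $i\to j$ are precisely those from a preimage of $i$ to a preimage of $j$. Given this identification, \cref{Intro Statement of Functoriality Theorem} immediately yields the desired isomorphism together with its stated $\M(G(Q^{\circledast,\boldsymbol{\ell}}, \mathbf{n}^{\circledast}))$-equivariance.

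It therefore suffices to verify that $H \hookrightarrow G$ is gluable for $\mathbf{N}_Q$ in the sense of \cref{def:gluable}. Exactly as in the proof of \cref{lem:gluable quiver}, reorienting any edge of $Q$ leaves both sides of the desired isomorphism unchanged (while simultaneously reorienting the corresponding edges of $Q^{\circledast,\boldsymbol{\ell}}$), so we may assume all edges between each pair of vertices of $Q$ have a common orientation. The essential observation is that the block-diagonal embedding $\prod_{v' \mapsto v}\GL_{\mathbf{n}^{\circledast}(v')} \hookrightarrow \GL_{n_v}$ identifies the diagonal torus of the subgroup with the full diagonal torus of $\GL_{n_v}$; hence $T_H \hookrightarrow T_G$ is an isomorphism. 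Condition (1) of \cref{def:gluable}, namely $\xi_1|_{T_H} = \alpha\xi_2|_{T_H}$ for some $\alpha\in\mathbb{Q}$, therefore reduces to the relation $\xi_1 = \alpha\xi_2$ on $T_G$ itself.

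Finally, because $Q$ has no loops, every weight of $\mathbf{N}_Q$ on $T_G$ has the form $\xi = e_{j,b} - e_{i,a}$ for some edge $i \to j$ with $i \neq j$ and indices $a \in \{1,\dots,n_i\}$, $b \in \{1,\dots,n_j\}$, where $e_{v,c}$ denotes the $c$-th coordinate character of the diagonal torus of $\GL_{n_v}$. Such weights have coefficients $\pm 1$ in the standard basis of characters, so $\xi_1 = \alpha\xi_2$ forces $\alpha \in \{\pm 1\}$; the case $\alpha = -1$ would require edges of opposite orientations between the same pair of vertices, which our choice of orientation precludes, so $\xi_1 = \xi_2$. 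Then $\langle\xi_1,\mu\rangle\langle\xi_2,\mu\rangle = \langle\xi_1,\mu\rangle^2 \geq 0$ for every cocharacter $\mu$, violating condition (2). This shows the inclusion is gluable, completing the proof. The only essential input beyond \cref{lem:gluable quiver} is the identification $T_H \xrightarrow{\sim} T_G$; given this, the gluability check is nearly identical to the one already given in that lemma.
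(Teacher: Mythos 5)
Your proof is correct and follows essentially the same route as the paper's: reorient so parallel edges share an orientation, observe that $G(Q^{\circledast,\boldsymbol{\ell}},\mathbf{n}^{\circledast})$ is a Levi (so the maximal tori coincide and condition (1) of \cref{def:gluable} becomes an equality of weights up to scalar on $T_G$ itself), conclude gluability, and apply \cref{Intro Statement of Functoriality Theorem}. Your added details — the explicit identification of $\mathbf{N}_Q|_H$ with $\mathbf{N}_{Q^{\circledast,\boldsymbol{\ell}}}$ and the $\alpha\in\{\pm 1\}$ analysis — are correct elaborations of steps the paper leaves implicit.
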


\begin{proof}
First, as changing orientation of a quiver does not change its Coulomb branch, changing the orientation of edges on $Q$ we may assume that any two parallel edges have the same orientation. In particular, any two weights of $Q$ which are scalar multiples of each other are equal. Moreover, the group $G(Q^{\circledast,\boldsymbol{\ell}},\mathbf{n}^{\circledast})$ is a Levi subgroup of $G(Q,\mathbf{n})$; in particular, a maximal torus of the former determines a maximal torus of the latter. Therefore, the inclusion is gluable, and so, applying \cref{Intro Statement of Functoriality Theorem}, we obtain our claim. 
\end{proof}

One can combine \cref{Pulled Back Quiver By Node Corollary} for the quiver $A_n$ with \cite[Theorem 1.3]{GinzburgKazhdanDifferentialOperatorsOnBasicAffineSpaceandtheGelfandGraevAction}, \cite[Theorem 2.2]{GannonCotangentBundleofParabolicBaseAffineSpaceandKostantWhittakerDescent}, and \cite[Theorem 2.11]{BravermanFinkelbergNakajimaRingObjectsIntheEquivariantDerivedSatakeCategoryArisingFromCoulombBranches} to rederive the main result of \cite[Theorem 1.4]{GannonWilliamsDifferentialOperatorsOnBaseAffineSpaceofSLnandQuantizedCoulombBranches} without appealing to the results of \cite{GinzburgRicheDifferentialOperatorsOnBasicAffineSpaceandtheAffineGrassmannian} or \cite{MaceratoLEviEquivariantRestrictionofSphericalPerverseSheaves}. (We do not give the details for this claim here.)

\subsection{Proof of \cref{final main theorem}}\label{Proof of Main Theorem 2 Subsection}

In this section, we prove \cref{final main theorem}, as well as its variant for $\SL_{n}$. To state it precisely, we first observe that, for any vector space $V$, $\GL(V)$ contains a canonical central subgroup, which acts on $V$ by scalar multiplication. When $Q$ is a quiver, we will denote this subgroup by $\mathbb{G}_m \subseteq \GL(V_Q)$; by light abuse of notation, we are omitting the dependence of this subgroup on the quiver $Q$. Note that this group acts trivially on $\bfN$, so the representation factors through the quotient $\GL_{Q_{\vec{m}}}/\mathbb{G}_m$.  With this terminology, we may now state our main theorem: 

\begin{Theorem}\label{Full Statement Coulomb Branches of Particular Quivers}
There are isomorphisms \[\mathcal{M}_C(\GL_{Q_{\vec{m}}}/\mathbb{G}_m, \bfN_{Q_{\vec{m}}}) \cong \overline{T^*(\SL_{n}/U_P)}\text{, } \mathcal{M}_C(\GL_{Q_{\vec{m}}}, \bfN_{Q_{\vec{m}}}) \cong \overline{T^*(\GL_{n}/U_P)}\] of affine varieties.
\end{Theorem}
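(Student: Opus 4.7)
The plan is to combine the functoriality machinery of \cref{gluing for quiver case} with a prior Kostant--Whittaker style description of $\overline{T^*(\GL_n/U_P)}$ to identify $\M(Q_{\vec{m}},\vec{n})$ with the partial implosion. First, the gluability hypothesis is automatic here: since $Q_{\vec{m}}$ has no loops and no multiple edges, \cref{lem:gluable quiver} ensures that any dismemberment of $Q_{\vec{m}}$ is gluable, so \cref{gluing for quiver case} applies.

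For the Borel case $\vec{m} = (1,1,\ldots,1)$, the quiver $Q_{(1,\ldots,1)}$ coincides with the explosion $A_n^{\circledast,\boldsymbol{\ell}}$ from \cref{Explosions of quiver gauge theories subsection} (with $\ell_n = n$). Combining the identification of $\overline{T^*(\SL_n/U)}$ as a Coulomb branch due to Ginzburg--Kazhdan \cite{GinzburgKazhdanDifferentialOperatorsOnBasicAffineSpaceandtheGelfandGraevAction} (or Jia \cite{JiaTheGeometryOfTheAffineClosureOfCotangentBundleOfBasicAffineSpaceForSLn}) with \cref{Pulled Back Quiver By Node Corollary}, following the recipe spelled out by Gannon--Williams \cite{GannonWilliamsDifferentialOperatorsOnBaseAffineSpaceofSLnandQuantizedCoulombBranches}, we obtain the desired isomorphism in the Borel case, together with the $\SL_n$ variant after quotienting by the central $\mathbb{G}_m$.

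For general $\vec{m}$, we apply \cref{gluing for quiver case} to the dismemberment of $Q_{\vec{m}}$ that splits the central junction vertex of dimension $n-1$ (at which the legs attach to the tail) into $l+1$ copies, one for each adjacent branch. The dismembered quiver $\check{Q}_{\vec{m}}$ then decomposes as a disjoint union: the linear tail $A_{n-1}$ with dimensions $(1,2,\ldots,n-1)$ together with, for each $k=1,\ldots,l$, an augmented leg $A_{m_k+1}$ with dimensions $(n-1,m_k,m_k-1,\ldots,1)$. The gluing formula expresses $\M(Q_{\vec{m}},\vec{n})$ as a Hamiltonian reduction of $\M(\check{Q}_{\vec{m}})$ by the regular centralizer action of the extra $\GL_{n-1}^l$-factor, and each augmented-leg Coulomb branch can in turn be identified with a suitable Kostant-type variety via iterated application of \cref{Pulled Back Quiver By Node Corollary} combined with \cite[Theorem 2.11]{BravermanFinkelbergNakajimaRingObjectsIntheEquivariantDerivedSatakeCategoryArisingFromCoulombBranches}.

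The final step, and the main technical obstacle, is to match this quiver-side Hamiltonian reduction with the parallel decomposition of $\overline{T^*(\GL_n/U_P)}$ on the geometric side. By the Kostant--Whittaker descent of \cite[Theorem 2.2]{GannonCotangentBundleofParabolicBaseAffineSpaceandKostantWhittakerDescent} (or equivalently the principal $U_L$-bundle structure of $\GL_n/U_P \to \GL_n/U$, where $U_L$ is the unipotent radical of a Borel of the Levi $L=\prod_k\GL_{m_k}$), $\overline{T^*(\GL_n/U_P)}$ can be written as a Hamiltonian product of $\overline{T^*(\GL_n/U)}$ with Kostant-type varieties $\Kost{\GL_{m_k}}$ for the Levi blocks, combined via the regular centralizer of $L$. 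Matching the two descriptions amounts to careful bookkeeping of the $\M(G)$-actions and of the inclusion $\GL(V_{Q_{\vec{m}}})\hookrightarrow \GL(V_{\check{Q}_{\vec{m}}})$; once checked, the identification $\M(Q_{\vec{m}},\vec{n}) \cong \overline{T^*(\GL_n/U_P)}$ follows, and the $\SL_n$ version follows by tracking the central $\mathbb{G}_m$-quotient throughout.
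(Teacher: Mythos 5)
Your overall strategy --- reduce to linear quivers via functoriality and then invoke a Kostant--Whittaker-type description of the partial implosion --- is the right one, and your treatment of the Borel case matches what the paper itself observes after \cref{Pulled Back Quiver By Node Corollary}. But for general $\vec{m}$ your choice of decomposition opens a gap that the tools you cite cannot close. Dismembering at the junction vertex produces, for each leg, the linear quiver with dimension vector $(n-1,\,m_k,\,m_k-1,\dots,1)$ in which \emph{every} vertex is gauged. These augmented legs are not explosions of quivers of the form $(1,2,\dots,N)$, so neither \cref{Pulled Back Quiver By Node Corollary} nor \cite[Theorem 2.11]{BravermanFinkelbergNakajimaRingObjectsIntheEquivariantDerivedSatakeCategoryArisingFromCoulombBranches} identifies their Coulomb branches with Kostant-type varieties; generically (e.g.\ whenever $m_k=n-1$ with $l\geq 2$) they are not of the concave-decreasing type whose Coulomb branches are known, and no iteration of the explosion corollary reduces $(n-1,m_k,\dots,1)$ to $(1,2,\dots,N)$. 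Moreover, the step you yourself flag as ``the main technical obstacle'' --- matching the resulting reduction against a decomposition of $\overline{T^*(\GL_n/U_P)}$ built from $\overline{T^*(\GL_n/U)}$ and pieces $\Kost{\GL_{m_k}}$ glued over the regular centralizer of $L$ --- is not bookkeeping: it is an unproved geometric theorem, and in the form you state it, it is not the statement available in the literature.

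The paper sidesteps both issues by applying \cref{Intro Statement of Functoriality Theorem} to a different map, $\GL_{Q_{\vec m}}/\Gm \to (\GL_{A_{\vec m}}\times \GL_{A_n})/\Gm$, in which the first vertices of the legs are embedded block-diagonally into a single new vertex of dimension $n$ appended to the tail (an explosion composed with a dismemberment, rather than a dismemberment at the junction). Both resulting factors are then linear quivers of the form $(1,2,\dots,N)$ or disjoint unions thereof, so \cref{lemma that relevant Coulomb branches are Kostant sections} --- proved in the appendix via derived Satake precisely to secure the equivariance for the regular-centralizer actions, a point your sketch also leaves unaddressed --- identifies their Coulomb branches with $\Kost{S(L)}$ and $\Kost{\SL_n}$. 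The output of the functoriality theorem is then literally $(\Kost{S(L)}\times_{\mathfrak{c}_{\SL_n}}\Kost{\SL_n})/J_{\SL_{n}}$, which is $\overline{T^*(\SL_n/U_P)}$ by the main theorem of \cite{GannonCotangentBundleofParabolicBaseAffineSpaceandKostantWhittakerDescent}, so no further geometric matching is required. To repair your argument you would need either to adopt this decomposition or to supply independent proofs of the two identifications your route leaves open.
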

In particular, this establishes \cref{final main theorem}.



The entirety of \cref{Proof of Main Theorem 2 Subsection} will be devoted to the proof of \cref{Full Statement Coulomb Branches of Particular Quivers}. The proof of \cref{Full Statement Coulomb Branches of Particular Quivers} will also require \cref{lemma that relevant Coulomb branches are Kostant sections}, whose proof is deferred to \cref{Appendix to Avoid Derived Satake in Main Body}.

For any reductive group $G$, we view $T^*G$ as a scheme over $\mathfrak{g}^*$ using the moment map from the left $G$-action, and let $\Kost{G}$ denote the preimage of a Kostant section $\mathcal{S}_{\mathsf{reg}}\cong \fc{G^{\vee}}\subset \LGd$ under this moment map to $\LGd$. By standard arguments (see for example \cite[Lemma 3.2.3(iii)]{GinzburgKazhdanDifferentialOperatorsOnBasicAffineSpaceandtheGelfandGraevAction}) if we choose a nondegenerate $G$-invariant bilinear form on $\mathfrak g$ we may identify \[\Kost{G} \cong G \times \cG \cong \mu_U^{-1}(\psi)/U\] where $U$ is the unipotent radical of a Borel of $G$ and $\psi$ is a generic character of the Lie algebra of $U$. 
We recall the following result, which was known physically \cite[Section 7.1]{DimofteGarnerStarShapedQuiver} and can be immediately derived mathematically by taking cohomology of the isomorphism of algebra objects in \cite[Theorem 2.11]{BravermanFinkelbergNakajimaRingObjectsIntheEquivariantDerivedSatakeCategoryArisingFromCoulombBranches}: 
\begin{Theorem}\label{BFN Ring Objects Theorem to Coulomb Branch}
For any nonnegative integer $n$, there is an isomorphism \begin{equation}\label{Isomorphism for PGLn}\M_C(\GL(V_{A_{n}})/\mathbb{G}_m, \bfN_{A_{n}}) \cong \Kost{\SL_n}\end{equation} of varieties over $\mathfrak{c}_{\PGL_n} \cong \LSL_n^*\sslash \SL_n$. 
\end{Theorem}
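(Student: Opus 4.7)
The plan is to derive this isomorphism by taking equivariant cohomology of the isomorphism of algebra objects established in \cite[Theorem 2.11]{BravermanFinkelbergNakajimaRingObjectsIntheEquivariantDerivedSatakeCategoryArisingFromCoulombBranches}. Recall that by the Braverman--Finkelberg--Nakajima construction, the ring of functions $\mathcal{O}(\M_C(G,\bfN))$ is defined as the $G(\mathcal{O})$-equivariant Borel--Moore homology of the BFN triple space $\mathcal{R}_{G,\bfN}$, and that this construction naturally lifts to a commutative algebra object $\mathcal{A}_{G,\bfN}$ in the $G(\mathcal{O})\rtimes\Gm^{\mathrm{rot}}$-equivariant derived category of $\Gr_G$. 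The quotient of the gauge group by the central $\mathbb{G}_m$ corresponds on the affine Grassmannian side to replacing $\Gr_{\GL_{A_n}}$ by $\Gr_{\GL_{A_n}/\mathbb{G}_m}$, whose Langlands dual setup pairs naturally with $\SL_n$, so that $\M_C(\GL(V_{A_n})/\mathbb{G}_m,\bfN_{A_n})$ has a distinguished structure map to $\mathfrak{c}_{\PGL_n}\cong \LSL_n^*\sslash\SL_n$.

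First, I would unpack \cite[Theorem 2.11]{BravermanFinkelbergNakajimaRingObjectsIntheEquivariantDerivedSatakeCategoryArisingFromCoulombBranches}: that result identifies the BFN algebra object for the quiver gauge theory $(\GL(V_{A_n})/\mathbb{G}_m, \bfN_{A_n})$ with the algebra object in the derived Satake category of $\SL_n$ arising from the Moore--Tachikawa 2d TQFT evaluated on a disk. By design, the latter is the algebra object whose $\SL_n(\mathcal{O})\rtimes \Gm^{\mathrm{rot}}$-equivariant cohomology recovers $\mathcal{O}(\Kost{\SL_n})$, with its natural algebra structure coming from the Hamiltonian $\SL_n$-action on $T^*\SL_n$ restricted to the Kostant section (equivalently, the description $\Kost{\SL_n}\cong \mu_U^{-1}(\psi)/U$ recalled in \cref{Mirror Sicilian Theories Subsection}).

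Second, I would apply equivariant cohomology to this isomorphism of algebra objects. On the left one obtains $\mathcal{O}(\M_C(\GL(V_{A_n})/\mathbb{G}_m, \bfN_{A_n}))$ by the definition of the Coulomb branch, while on the right one obtains $\mathcal{O}(\Kost{\SL_n})$ by construction of the Moore--Tachikawa ring object. To upgrade this identification of rings to one of schemes over $\mathfrak{c}_{\PGL_n}$, I would observe that both sides are naturally algebras over $H^*_{\SL_n(\mathcal{O})}(\mathrm{pt})\cong \mathcal{O}(\mathfrak{c}_{\PGL_n})$: on the Coulomb side this is the canonical map to $\LG\sslash G$ recalled in the introduction; on the Kostant side it is the restriction of the moment map $T^*\SL_n\to \LSL_n^*$ to $\Kost{\SL_n}$, which lands in and is identified with $\mathfrak{c}_{\SL_n}$ via the Kostant section. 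The algebra isomorphism of \cite[Theorem 2.11]{BravermanFinkelbergNakajimaRingObjectsIntheEquivariantDerivedSatakeCategoryArisingFromCoulombBranches} is by construction a map of algebras over this equivariant cohomology ring, so the induced isomorphism of affine schemes is automatically a map over $\mathfrak{c}_{\PGL_n}$.

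The main obstacle in this approach is nothing internal to the argument we present: all of the genuine geometric content is carried by the cited BFN theorem, whose proof exploits factorization structures on the affine Grassmannian and a geometric realization of Whittaker reduction. Treating \cite[Theorem 2.11]{BravermanFinkelbergNakajimaRingObjectsIntheEquivariantDerivedSatakeCategoryArisingFromCoulombBranches} as a black box, the remaining task is only to verify that applying the equivariant cohomology functor to the stated isomorphism yields the desired isomorphism of varieties over $\mathfrak{c}_{\PGL_n}$, which amounts to the matching of base structures described above.
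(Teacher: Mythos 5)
Your proposal matches the paper's own derivation: the paper obtains \cref{BFN Ring Objects Theorem to Coulomb Branch} precisely by taking equivariant cohomology of the isomorphism of algebra objects in \cite[Theorem 2.11]{BravermanFinkelbergNakajimaRingObjectsIntheEquivariantDerivedSatakeCategoryArisingFromCoulombBranches}, with the compatibility over $\mathfrak{c}_{\PGL_n}$ coming from the $H^*_{G}(\mathrm{pt})$-algebra structure exactly as you describe. Your spelled-out matching of the base structures is a correct elaboration of what the paper leaves implicit.
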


We will now give a small upgrade of \cref{BFN Ring Objects Theorem to Coulomb Branch}, which we state precisely in \cref{lemma that relevant Coulomb branches are Kostant sections}, which informally says that \begin{enumerate}
    \item Both varieties appearing in the isomorphism \cref{Isomorphism for PGLn} naturally acquire actions of $\M_C(\PGL_n, 0)$ for which this isomorphism is equivariant and 
    \item There is a similar equivariant isomorphism as in \cref{Isomorphism for PGLn} where $\PGL_n$ is replaced with other various type $A$ groups such as $\GL_n$ and more generally products of copies of various $\GL_m$.
\end{enumerate}
We remark that both of these points may be known to experts, but we are unable to find a precise formulation or proof of these facts in the literature. 


To state \cref{lemma that relevant Coulomb branches are Kostant sections} precisely, we set more notation. For $m_i$ as above, let $A_{\vec{m}}$ denote any quiver whose underlying graph is 
\begin{equation*}
\begin{tikzpicture}
    \tikzstyle{gaugered} = [circle, draw=black, inner sep=1pt, minimum size=6pt, label distance=1.5pt]

    \tikzstyle{gaugeblue} = [circle, draw=black, inner sep=1pt, minimum size=6pt, label distance=1.5pt]
    \tikzstyle{gaugepurple} = [circle, draw=black, inner sep=1pt, minimum size=6pt, label distance=1.5pt]
    \tikzstyle{gauge} = [circle, draw, inner sep=1pt, minimum size=6pt, label distance=1.5pt]
    
    \node[gaugeblue, label=above:{$m_1$}] (6u) at (8,0.7) {};
    \node[gaugeblue, label=above:{$m_1 - 1$}] (6u2) at (9,0.7) {};
    \node (7u) at (10,0.7) {$\cdots$};
    \node[gaugeblue, label=above:{$1$}] (8u) at (11,0.7) {};
    \node[gaugeblue, label=above:{$m_{l - 1}$}] (6m) at (8,-0.3) {};
    \node[gaugeblue, label={[xshift=0.2cm]above:{$m_{l - 1} - 1$}}] (6m2) at (9,-0.3) {};
    \node (7m) at (10,-0.3) {$\cdots$};
    \node (73) at (10.4,.4) {$\vdots$};
    
    \node[gaugeblue, label=above:{$1$}] (8m) at (11,-0.3) {};
    \node[gaugeblue, label=below:{$m_l$}] (6d) at (8,-0.7) {};
    \node[gaugeblue, label=below:{$m_l - 1$}] (6d2) at (9,-0.7) {};
    \node (7d) at (10,-0.7) {$\cdots$};
    \node[gaugeblue, label=below:{$1$}] (8d) at (11,-0.7) {};
    
    \draw (6u)--(6u2)--(7u)--(8u)
          (6m)--(6m2)--(7m)--(8m)
         (6d)--(6d2)--(7d)--(8d);
\end{tikzpicture}.
\end{equation*} Observe that if our ordered partition $\vec{m}$ contains one element (i.e. $\vec{m} = \begin{pmatrix} n \end{pmatrix}$),  we have $A_n = A_{\vec{m}}$. As above, our ordered partition $\vec{m}$ determines a standard Levi subgroup $L$ of $\GL_n$, consisting of block diagonal matrices; we let $\overline{L} \subseteq \PGL_n$ denote the image of this group under the quotient map and let $S(L) := L \cap \SL_n$. 

For any reductive group $G$, we let $G^{\vee}$ denote the Langlands dual group, and recall that there is an isomorphism of group schemes over $\mathfrak{c}_G$ \begin{equation}\label{BFM Iso}
    \M_C(G, 0) \cong J_{G^{\vee}}\end{equation} 
 that identifies $\M_C(G, 0)$ with the \textit{group scheme of universal centralizers} $J_{G^{\vee}}$, or in other words the centralizer of a Kostant slice in $G^{\vee}$, given by \cite[Theorem 2.12]{BezrukavnikovFinkelbergMirkovicEquivariantHomologyandKTheoryofAffineGrassmanniansandTodaLattices}. Since $T^*G^{\vee}$ is a Hamiltonian $G^{\vee}$-space, the restriction to a Kostant section naturally acquires an action of $J_{G^{\vee}}$, 
 and so using the isomorphism \cref{BFM Iso}, we see that $\M_C(G, 0)$ acts on $\Kost{G^{\vee}}$ for any reductive group $G$ by left multiplication. With this notation, we state our small upgrade of \cref{BFN Ring Objects Theorem to Coulomb Branch}:
\begin{Lemma}\label{lemma that relevant Coulomb branches are Kostant sections}
For any $n$, there is an isomorphism \begin{equation}\label{Coulomb Branch Corollary of BFN Isos}\M_C(\GL(V_{A_{n}})/\mathbb{G}_m, \bfN_{A_{n}}) \cong \Kost{\SL_n}\text{, respectively } \M_C(\GL(V_{A_{n}}), \bfN_{A_{n}}) \cong \Kost{\GL_n}\end{equation} which is equivariant for the action of $\M_C(\PGL_{n}, 0)$, respectively $\M_C(\GL_{n}, 0)$. More generally, there are isomorphisms \begin{equation}\label{Coulomb Branch Corollary of Disjoint BFN Isos}\M_C(\GL(V_{A_{\vec{m}}})/\mathbb{G}_m, \bfN_{A_{\vec{m}}}) \cong \Kost{S(L)}\text{, respectively } \M_C(\GL(V_{A_{\vec{m}}}), \bfN_{A_{\vec{m}}}) \cong \Kost{L}\end{equation} equivariant for the actions of $\M_C(\overline{L}, 0)$, respectively $\M_C(L, 0)$.
\end{Lemma}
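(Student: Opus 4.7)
The plan is to bootstrap from \cref{BFN Ring Objects Theorem to Coulomb Branch}, which already provides the first isomorphism in \cref{Coulomb Branch Corollary of BFN Isos} at the level of schemes, by carrying out three upgrades: establishing the equivariance, extending from the $\SL_n$/$\PGL_n$ case to the $\GL_n$ case, and passing from the single chain $A_n$ to the disjoint-union quiver $A_{\vec m}$.

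First I would establish the equivariance in the $A_n$ case by revisiting the proof of \cref{BFN Ring Objects Theorem to Coulomb Branch}, whose underlying input is an isomorphism of commutative algebra objects in the $\PGL_n$-equivariant derived Satake category (\cite[Theorem 2.11]{BravermanFinkelbergNakajimaRingObjectsIntheEquivariantDerivedSatakeCategoryArisingFromCoulombBranches}). Every such algebra object is canonically a module over the monoidal unit, whose equivariant Borel--Moore homology is $\mathcal{O}(\M_C(\PGL_n,0))$. Under the Bezrukavnikov--Finkelberg--Mirković identification $\M_C(\PGL_n,0)\cong J_{\SL_n}$ recorded in \cref{BFM Iso}, this module structure matches the left multiplication of $J_{\SL_n}$ on $\Kost{\SL_n}$. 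Since any morphism of algebra objects is automatically module-equivariant, this upgrades the scheme-theoretic isomorphism to the required $\M_C(\PGL_n,0)$-equivariant isomorphism.

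Next I would handle the $\GL_n$-variant by relating the two Coulomb branches through the central extension $1\to \mathbb{G}_m\to \GL(V_{A_n})\to \GL(V_{A_n})/\mathbb{G}_m\to 1$. Since $\bfN_{A_n}$ factors through the central quotient, the standard compatibility of Coulomb branches with central tori (which can be packaged by taking $\tG=\GL(V_{A_n})$ and $G=\GL(V_{A_n})/\mathbb{G}_m$ in our own framework) expresses $\M_C(\GL(V_{A_n}),\bfN_{A_n})$ as an appropriate fiber product of $\M_C(\GL(V_{A_n})/\mathbb{G}_m,\bfN_{A_n})$ with $\M_C(\mathbb{G}_m,0)\cong T^*\mathbb{G}_m$. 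This mirrors the analogous description of $\Kost{\GL_n}$ in terms of $\Kost{\SL_n}$ coming from the presentation $\GL_n\cong \SL_n\times_{\mu_n}\mathbb{G}_m$, so matching the two sides yields both the scheme-theoretic isomorphism and its $\M_C(\GL_n,0)$-equivariance.

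Finally, for the extension to $A_{\vec m}$, I would observe that the underlying graph of $A_{\vec m}$ is a disjoint union of $l$ chain quivers, where the $j$-th is precisely $A_{m_j+1}$ with the linear dimension vector $1,2,\dots,m_j$. Both the gauge group $\GL(V_{A_{\vec m}})=\prod_j \GL(V_{A_{m_j+1}})$ and the representation $\bfN_{A_{\vec m}}=\bigoplus_j \bfN_{A_{m_j+1}}$ factor as products, so the Coulomb branch factors accordingly, and the previous step applied componentwise gives
\[
\M_C(\GL(V_{A_{\vec m}}),\bfN_{A_{\vec m}})\cong \prod_j \Kost{\GL_{m_j}}=\Kost{L},
\]
with the product of the individual equivariances supplying the $\M_C(L,0)$-equivariance. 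The $\mathbb{G}_m$-quotient version is then obtained by rerunning the central-$\mathbb{G}_m$-quotient argument of the previous paragraph, where the diagonal central $\mathbb{G}_m\subset \GL(V_{A_{\vec m}})$ corresponds on the Kostant side to the determinant constraint cutting out $\Kost{S(L)}\subset \Kost{L}$, and $\overline{L}=L/Z(\GL_n)$ has universal centralizer matching the resulting action.

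The principal obstacle I anticipate is tracking the module structure over the monoidal unit through the derived Satake construction carefully enough to extract the equivariance, rather than only the scheme-theoretic isomorphism, from the BFN theorem; the passage between $\SL/\PGL$ and $\GL$ via the central $\mathbb{G}_m$ is also somewhat delicate, as both sides of the isomorphism genuinely change and one must verify that the respective modifications are compatible.
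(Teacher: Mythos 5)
Your overall skeleton matches the paper's: start from the algebra-object isomorphism behind \cref{BFN Ring Objects Theorem to Coulomb Branch}, take products over the connected components of $A_{\vec m}$, modify by central tori to pass between the $\SL/\PGL$ and $\GL$ statements, and extract equivariance from naturality of the $\M_C(G,0)$-action. However, the step you flag as the principal obstacle is a genuine gap, and your description of its mechanism is not right. The action of $\M_C(G,0)$ on $H^*_{G_{\mathcal{O}}}$ of an object of $\sphericalHeckeCatForTHISGROUP{G}$ does not come from a module structure over the monoidal unit: the unit is the skyscraper on the point orbit, whose equivariant homology is only $H^*_G(\mathrm{pt})$, and every object is trivially a module over it. The relevant structure is the \emph{comodule} structure over $H^{BM}_{G_{\mathcal{O}}}(\Gr_G)\cong\mathcal{O}(\M_C(G,0))$ induced by ind-properness of $\Gr_G$ via the coaction $t_*(\mathcal{A})\to t_*t^!t_*(\mathcal{A})$. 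Granting that, the real content is your one-line assertion that this structure ``matches the left multiplication of $J_{\SL_n}$ on $\Kost{\SL_n}$'': identifying the topological comodule structure with the geometric $J_{G^{\vee}}$-action on the restriction to the Kostant slice is precisely \cref{Satake On Free Objects with Compatibility}, which the paper has to deduce from Bezrukavnikov--Finkelberg's compatibility of derived Satake with Kostant--Whittaker reduction together with a fully-faithfulness argument upgrading a natural isomorphism of module-valued functors to one of comodule-valued functors. Asserting the matching without this input skips the theorem the whole appendix is built around.

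Your route from the $\PGL_n$ statement to the $\GL_n$ one also differs from the paper's and is under-specified. There is no splitting $\GL(V_{A_n})\cong(\GL(V_{A_n})/\mathbb{G}_m)\times\mathbb{G}_m$, so ``an appropriate fiber product with $T^*\mathbb{G}_m$'' hides real work; moreover the standard statement (\cite[Proposition 3.18]{BravermanFinkelbergNakajimaTowardsaMathematicalDefinitionOfCoulombBranchesOf3dNEquals4GaugeTheoriesII}) recovers $\M_C(G,\bfN)$ from $\M_C(\tG,\bfN)$ by Hamiltonian reduction, i.e.\ it passes from the larger group to the smaller one, the opposite direction from what you need. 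The paper instead stays at the level of algebra objects: for a central isogeny $H\to H'$ the pullback along $\Gr_H\to\Gr_{H'}$ carries $\mathcal{A}_R^{H'}$ to $\mathcal{A}_R^{H}$, and base change of algebra objects applied to the isomorphism for $\SL_n$ produces the $\GL_n$, $\overline L$ and $S(L)$ statements uniformly, with the equivariance coming along for free; only then does one take cohomology. Your product decomposition for $A_{\vec m}$ agrees with the paper's exterior-product step and is fine.
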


The proof of \cref{lemma that relevant Coulomb branches are Kostant sections} is given in \cref{Appendix to Avoid Derived Satake in Main Body}.

\begin{proof}[Proof of \cref{Full Statement Coulomb Branches of Particular Quivers}]
We prove the former claim; the latter is proved completely analogously. Since an isomorphism of groups induces an isomorphism of Coulomb branches (by our above functoriality results, say) we may assume that $L$ is a standard Levi subgroup. 
Applying \cref{Intro Statement of Functoriality Theorem} for the map \[\GL_{Q_{\vec{m}}}/\mathbb{G}_m \to (\GL_{A_{\vec{m}}}\times \GL_{A_{n}})/\mathbb{G}_m\] one obtains an isomorphism \begin{equation}\label{Coulomb Branch for Exploded and Glued Quivers}\M_C(\GL_{Q_{\vec{m}}}/\mathbb{G}_m, Q_{\vec{m}}) \cong (\M_C(\GL_{A_{\vec{m}}}/\mathbb{G}_m, A_{\vec{m}}) \times_{\mathfrak{pgl}_{n}\sslash \GL_{n}} \M_C(\GL_{A_{n}}/\mathbb{G}_m, A_{n}))/\M_C(\PGL_{n}, 0)\end{equation} of affine varieties. Combining the isomorphisms of \cref{lemma that relevant Coulomb branches are Kostant sections} and the isomorphism \cref{BFM Iso} we see that \cref{Coulomb Branch for Exploded and Glued Quivers} gives rise to an isomorphism \begin{equation}\label{Coulomb Branch for Exploded and Glued Quivers After Langlands Duality}\M_C(\GL_{Q_{\vec{m}}}/\mathbb{G}_m, Q_{\vec{m}}) \cong ( \Kost{S(L)}\times_{\mathfrak{c}_{\SL_{n}}}\Kost{\SL_n})/J_{SL_{n}}\end{equation} but the main result of \cite{GannonCotangentBundleofParabolicBaseAffineSpaceandKostantWhittakerDescent} shows that the right hand side of \cref{Coulomb Branch for Exploded and Glued Quivers After Langlands Duality} is isomorphic to $\overline{T^*(\SL_{n}/U_P)}$, as desired. 
\end{proof}

\subsection{Mirror Sicilian theories}\label{Mirror Sicilian Theories Subsection}
One natural setting where we can apply the above results are the examples provided by \textit{mirror Sicilian theories}, as studied by Benini, Tachikawa and Xie in \cite{beniniMirrors3d2010}.  These are theories associated to a choice of group $G$ and a punctured Riemann surfaces of genus $g$ with $p$ punctures, together with a choice of defect at each puncture.  These defects correspond to the Nahm pole boundary condition for a choice of $\operatorname{SL}_2$ mapping to the group $G$, up to conjugacy; these in turn are classified by the nilpotent orbits in $\LG$.

\subsubsection{Mirror Sicilian theories for $\GL_n$}In the case $G=GL_n$, Sicilian theories have been constructed  as the mirror of quiver gauge theories for {\bf comet-shaped quivers}, obtained by joining together $g$ loops and $p$ legs at a single vertex. In other words, comet-shaped quivers are those quivers whose underlying graph has the form
\begin{equation*}
\begin{tikzpicture}
    \tikzstyle{gaugered} = [circle, draw=black, inner sep=1pt, minimum size=6pt]
    \tikzstyle{gaugeblue} = [circle, draw=black, inner sep=1pt, minimum size=6pt]
    \tikzstyle{gaugepurple} = [circle, draw=black, inner sep=1pt, minimum size=6pt]
    \tikzstyle{gauge} = [circle, draw, inner sep=1pt, minimum size=6pt]

    \node[gaugered] (5) at (5,0) {};
    \draw (5) to[loop left, looseness=30] (5);
    \path (5) ++(-0.9,0) node {$\hdots$}; 
    \draw (5) to[loop left, looseness=85] (5);
    \draw (5) to[loop left, looseness=100] (5);
    \draw (5) to[loop left, looseness=115] (5);

    \node[gaugeblue] (6u) at (6,0.5) {};
    \node[gaugeblue] (6u2) at (7,0.5) {};
    \node (7u) at (8,0.5) {$\cdots$};
    
    \node[gaugeblue] (9u) at (10,0.5) {};
    \node[gaugeblue] (8u) at (9,0.5) {};
    \node[gaugeblue] (6m) at (6,-0.3) {};
    \node[gaugeblue] (6m2) at (7,-0.3) {};
    \node (7m) at (8,-0.3) {$\cdots$};
    \node (7x) at (8.5, 0.2) {$\vdots$};
    \node[gaugeblue] (8m) at (9,-0.3) {};
    \node[gaugeblue] (6d) at (6,-0.6) {};
    \node[gaugeblue] (6d2) at (7,-0.6) {};
    \node (7d) at (8,-0.6) {$\cdots$};
    \node[gaugeblue] (8d) at (9,-0.6) {};
    \node[gaugeblue] (9d) at (10,-0.6) {};
    \node[gaugeblue] (10d) at (11,-0.6) {};
    
    \draw (5)--(6u)--(6u2)--(7u)--(8u)--(9u)
          (5)--(6m)--(6m2)--(7m)--(8m)
          (5)--(6d)--(6d2)--(7d)--(8d)--(9d)--(10d);
\end{tikzpicture}.
\end{equation*}
For each puncture, we have a choice of partition $h_1\geq \cdots \geq h_k$ of $n$, given by the Jordan type of the corresponding nilpotent orbit.  This describes the dimension vector along the corresponding leg as follows: on the vertex $m$ steps away from the central vertex, the dimension vector is $n_m=h_{m+1}+\cdots +h_k$.  That is, these dimension vectors decrease as we move away from the central node, and this decrease is concave ($2n_m\leq n_{m-1}+n_{m+1}$). In particular, if the nilpotent is regular, we have a single node $(n)$, and if it is trivial, then we obtain $(n,n-1,\dots,1)$.  
\begin{Remark}
	It's very confusing to straighten out all the appearances of Langlands duality in the literature on these theories.  To help the reader interested in these details, let us note:
	\begin{enumerate}
		\item We are using Nahm pole boundary conditions corresponding to nilpotents $e\in \mathfrak{g}$.   
		\item We can then use the Janus boundary condition and $S$-duality to construct the theory $T_{e}(G^{\vee})$ and its $S$-dual $T^{e}(G)$ as in \cite[\S 4.2]{gaiottoSDuality2009}.  The intersection with the nilpotent cone of the Slodowy slice to the nilpotent $e$  can be interpreted as the Coulomb and Higgs branches of these theories, respectively.
		\item The theory $T^{e}(G)$ corresponds to the defect at each puncture;  in particular, if $e=0$, then $T^{e}(G)=T(G)$, and this gives a ``maximal puncture'' in the phrasing of \cite{beniniMirrors3d2010}.
		\item As discussed in \cite[Rem. 5.3]{BravermanFinkelbergNakajimaRingObjectsIntheEquivariantDerivedSatakeCategoryArisingFromCoulombBranches}, the Higgs branch of the Sicilian theory will then be the Coulomb branch of a gauge theory for $G^{\vee}$, an expectation we make more precise in \cref{Conjecture for Gluing Everything}.
	\end{enumerate}
	
\end{Remark}   

Of course, the quiver associated to one of these Sicilian theories has a natural dismemberment where we separate off the loops at the central vertex and then consider each leg separately. Such a quiver satisfies the conditions of \cref{lem:gluable quiver} if $g=0$. In the case of $G = \SL_n$ or $G = \GL_n$ with nilpotents for all punctures regular, one can use \cref{gluing for quiver case} together with \cref{lemma that relevant Coulomb branches are Kostant sections} to give an alternate proof of the fact that the \textit{Moore-Tachikawa varieties} are the Coulomb branches of the associated star-shaped quiver, originally proved in \cite[Section 5]{BravermanFinkelbergNakajimaRingObjectsIntheEquivariantDerivedSatakeCategoryArisingFromCoulombBranches}. 

More generally, in the genus 0 case, the dismemberment corresponding to removing a single leg gives an operation corresponding to adding a single puncture to a Riemann surface, mentioned as a desideratum in \cite[\S 4.7]{bravermanfinkelbergsurvey}. This uses as an input the Coulomb branch $\M_{\mathbf{h}}=\M(G_Q,\mathbf{n} )$ for a single leg $Q$ with dimension vector $\mathbf{n}$ as above. In fact, we can describe the space $\M_{\mathbf{h}}$ as a bow variety in the more general setting where $Q$ is a type $A_{s + r + 1}$ quiver for which the dimension vector $(n_{-s},\dots, n_0,\dots, n_r)$ satisfies \begin{equation}\label{Inequalities for n}n_{-s}<\cdots < n_{-1}<n=n_0> n_1> \cdots > n_{r}\end{equation} using work of Nakajima--Takayama \cite[Theorem 6.18]{nakajimaCherkisBow2017}.  

To make this description more explicit, we first recall that a {\bf companion matrix} for a polynomial $p$ with $\deg p=n$ is the matrix for the action of $u$ on $\C[u]/(p(u))$ in the basis $\{1,u,\dots,u^{n-1}\}$.  That is, it is a matrix of the form
\[C_p=\begin{bmatrix}
    0 & 0 & 0 & \cdots & -p_0\\
    1 & 0 & 0 & \cdots & -p_1\\
    0& 1 & 0 &\cdots & -p_2\\
    \vdots &\vdots &\vdots & \ddots &\vdots \\
    0 & 0 & 0& \cdots & -p_{n-1}
\end{bmatrix}\]

For any sequence of positive integers $x_1,\dots, x_r$, let $\mathcal{S}_{x_1,\dots, x_r}$ be the set of matrices where \begin{enumerate}
    \item The block diagonal with sizes $x_1,\dots, x_r$ is given by arbitrary companion matrices.
    \item Below the block diagonal, all non-zero entries are in the $y_k$th rows where $y_k=x_1+\cdots + x_k+1$.
    \item Above the block diagonal, all non-zero entries are in the $z_k$th columns, where $z_k=x_1+\cdots +x_{k+1}$.
\end{enumerate}
That is, when we divide $A$ into blocks with sizes $x_1,\dots, x_r$, they have the form
\[
\left[\begin{array}{{c|c|c|c}}
    C_{p_1} & D_{12} & \cdots & D_{1r}\\ \hline
    E_{21} & C_{p_2} & \cdots & D_{2r}\\\hline
    \vdots & \vdots & \ddots & \vdots\\\hline
    E_{r1} & E_{r2} & \cdots &C_{p_r}
\end{array}\right]
\]
where all $D_{ij}$'s only have non-zero last column, and $E_{ij}$'s only have non-zero top row.  
This space appears in \cite[E.5]{nakajimaCherkisBow2017}\footnote{This erratum only appears in the \href{https://arxiv.org/pdf/1606.02002v4}{version 4 on the arXiv}, not in the published version.}, where $GL(n)\times \mathcal{S}_{x_1,\dots, x_r}$ is interpreted as a space of solutions to Nahm's equations.  

As noted in \cite[E.5]{nakajimaCherkisBow2017}, if the sequence $x_1,\dots, x_n$ is weakly increasing, then $\mathcal{S}_{x_1,\dots, x_r}$ is a normal slice to the nilpotent with (lower triangular) Jordan type $x_1,\dots, x_r$ (which is the minimal rank element of $\mathcal{S}_{x_1,\dots, x_r}$) by \cite[Lemma 3.2.2]{MV22}. 

Let \[\gamma(\mathbf{n})=\{(X,g)\mid X\in \mathcal{S}_{n_{r}, n_{r+1}-n_{r},\dots , n_0-n_{1}}, g^{-1}Xg\in \mathcal{S}_{n_{-s}, n_{-s+1}-n_{-s}, \dots, n_0-n_{-1}  }^{T}\}.\]  This variety is equipped with a map $\gamma(\mathbf{n})\to \fc{\GL_{n_k}}$ for $k\geq 0$ is given by taking the orbit of the top left $n_k\times n_k$ minor $X_k$ of $X$, and for $k\leq 0$ by the orbit of the top left $n_k\times n_k$ minor $Y_k$ of $Y=gXg^{-1}$.  

The schemes $\gamma(\mathbf{n})$ also carry an action of $\M(\GL_{n_k})$. In order to describe this action, we can identify $\M(\GL_{n_k})$ with the set of pairs \[\M(\GL_{n_k})=\{(P,Q) \mid P(u) \text{ monic of degree } n_k, Q(u)\in \C[u]/P[u] \text{ a unit}\}.\]  Let $\iota_k\colon \GL_{n_k}\to \GL_{n}$ be the usual inclusion as the upper left corner and $Z_k=\iota_k(Q(X_k))$ for $k\geq 0$ and $Z_k=\iota_k(Q(Y_k))$ if $k<0$.  Then we can define an action of $\M(\GL_{n_k})$ on $\gamma(\mathbf{n})$ as an $\fc{\GL_{n_k}}$ scheme via the formula:
\begin{equation}\label{eq:bow-action}
    (P,Q)\cdot (X,g)=\begin{cases}
    (Z_kXZ_k^{-1},Z_kg) & k>0\\
    (Z_kXZ_k^{-1}, Z_kg) & k=0\\
    (X,gZ_k) & k<0.
\end{cases} \qquad P=\begin{cases}
    \det(uI-X_k)& k\geq 0\\
    \det(uI-Y_k)& k\leq 0
\end{cases}
\end{equation}
\begin{Lemma}\label{lem:bow}
If $Q$ is a type $A_{r + s + 1}$ quiver for some nonnegative integers $r, s$ for which the dimension vector $(n_{-s},\dots, n_0,\dots, n_r)$ satisfies \cref{Inequalities for n}, there is a $\M(\GL(V_Q))$-equivariant isomorphism of the Coulomb branch $M(G_Q,\mathbf{n} )$ with the subset $\gamma(\mathbf{n})\subset \mathfrak{gl}_n\times \GL_n $.
\end{Lemma}
It's worth noting that this result follows naturally from the description of bow varieties in \cite[\S 3]{nakajimaCherkisBow2017} and \cite[\S 2]{takayamaNahmsEquations2016} and, in particular, is a direct application of the ``divide in the middle'' trick used to prove \cite[Theorem 7.11]{nakajimaCherkisBow2017} and in \cite[Theorem 2.10 \& 3.6]{jiBowVarieties2023}), but it seems from our discussions with experts that this version has not appeared in the literature.  

\begin{proof}[Proof of \cref{lem:bow}]
By \cite[Theorem 6.18]{nakajimaCherkisBow2017}, the Coulomb branch of a quiver $Q$ of type $A$ is the moduli space of representations of the handsaw quiver 
\[\begin{tikzcd}
	\cdots && {V_{-1}} && {V_0} && {V_1} && \cdots \\
	& \cdots && {\mathbb{C}} && {\mathbb C} && \cdots
	\arrow["{A_{-2}}", from=1-1, to=1-3]
	\arrow["{B_{-1}}", from=1-3, to=1-3, loop, in=55, out=125, distance=10mm]
	\arrow["{A_{-1}}", from=1-3, to=1-5]
	\arrow["{b_{-1}}"', from=1-3, to=2-4]
	\arrow["{B_0}", from=1-5, to=1-5, loop, in=55, out=125, distance=10mm]
	\arrow["{A_0}", from=1-5, to=1-7]
	\arrow["{b_0}"', from=1-5, to=2-6]
	\arrow["{B_1}", from=1-7, to=1-7, loop, in=55, out=125, distance=10mm]
	\arrow["{A_1}", from=1-7, to=1-9]
	\arrow["{b_1}"', from=1-7, to=2-8]
	\arrow["{a_{-1}}"', from=2-2, to=1-3]
	\arrow["{a_0}"', from=2-4, to=1-5]
	\arrow["{a_1}"', from=2-6, to=1-7]
\end{tikzcd}\]
where $\dim(V_i)=n_i$ satisfying the equations
\begin{equation}\label{handsaw-mm}
    A_iB_{i}-B_{i+1}A_i-a_{i+1}b_i=0
\end{equation}
and stability conditions
\begin{itemize}
    \item[(S1)] no non-zero subspace $S_i\subset V_i$ satisfies $B_i(S_i)\subset S_i, A_i(S_i)\subset S_{i+1}, b_i(S_i)=0$.
    \item[(S2)] no proper subspace $T_i\subset V_i$ satisfies $B_i(T_i)\subset T_i, A_{i}(T_{i})=T_{i+1}, a_i(1)\subset T_i$.
\end{itemize}

Thus, we need to produce an isomorphism of $\gamma(\mathbf{n})$ with the representations of the handsaw quiver in the unimodal case.
Given $(X,g)\in  \gamma(\mathbf{n})$ with $Y=g^{-1}Xg$, we can construct a representation of the handsaw quiver as follows.
\begin{enumerate}\renewcommand{\theenumi}{\alph{enumi}}
	\item If $k\geq 0$, let the map $A_k\colon \C^{n_k}\to \C^{n_{k+1}}$ be projection to the first $n_{k+1}$ coordinate vectors, $B_k$ be the top $n_{k}\times n_k$ corner $X_k$ of $X$, $b_k$ the projection to the last coordinate, and $a_{k+1}$ the top $n_{k+1}$ entries of the last column of $X_k$.  
	\item If $k\leq -1$, let $A_{k-1}\colon \C^{n_{k-1}}\to \C^{n_{k}}$ be the inclusion of the first $n_k$ coordinate vectors, $B_k$ be the top $n_{k}\times n_k$ corner $Y_k$ of $Y$, $a_k$ the unit vector $(0,\dots, 1)$ and $b_k$ the dot product with the leftmost $n_k$ entries of the last row of $Y_{k+1}$
	\item Let $A_{-1}$ be the first $n_{-1}$ columns of $g$ and $a_0$ multiplication by the last column of $g$.
\end{enumerate}
We can describe this a bit more systematically by performing construction (a) using $X$ to obtain the representation for $k\geq 0$,  then construction (a) again for $Y^T$ to obtain the dual of the representation for $k\leq 0$.  We then glue the vector spaces $\C^{n_0}$ that appear in both constructions using $g$ for the change of basis.  

This always results in a stable representation by \cite[Prop. 2.14]{takayamaNahmsEquations2016}.  Thus, we have a map $\gamma(\mathbf{n})\to M(G_Q,\mathbf{n} )$.  To show the isomorphism, we construct its inverse.  

We do this by inductively constructing the corresponding basis on the vector spaces $V_k$.
By \cite[Lemma 2.19]{takayamaNahmsEquations2016},
if we consider the filtration \[U_{k,m}=\{v\in V_k \mid b_kv=b_kB_kv=\cdots = B_k^{m-1}v=0\},\] then $U_{k,n_k-n_{k+1}}$ is a complement to $\ker A_k$.   In particular, given a basis $v_{k+1,1},\dots, v_{k+1,n_{k+1}}$ of $V_{k+1}$, we can uniquely lift this to a basis $v_{k,1},\dots, v_{k,n_{k+1}}$ of $U_{k,m}$.  Furthermore, there is a unique extension of this to a basis of $U_{k,m-1}$ by letting $v_{k,n_{k+1}+1}$ be the unique element of $U_{k,m-1}\cap \ker A_k$  such that $b_k(B_k^{n_k-n_{k-1}}v_{k,n_{k+1}+1}=1$.  We can extend this to a basis of the whole space by $v_{k,n_{k+1}+j}=B_{k}^{j-1}v_{k,n_{k+1}+1}$.  
In this basis, the endomorphism $B_k$ acts in Hurtubise normal form, that is:
\begin{equation*}
    B_{k}^{(v)}=\begin{bmatrix}
B_{k+1}^{(v)} & D\\  
   E     & C_p
    \end{bmatrix} 
\end{equation*}
where only the first row of $E$ and last column of $D$ are non-zero and $C_p$ is a companion matrix.  
  Given a stable representation of the handsaw quiver, let $X=B_{0}^{(v)}$ denote the matrix of $B_0$ in the basis we have constructed.

On the spaces for non-positive indices, we can consider the spaces $V_0^*, V_{-1}^*,\dots, V_{-s}^*$, the transposes of $A_k,B_k,a_k,b_k$ and apply the same argument again.  This produces a basis $w_{0,1}^*,\cdots,w_{0,n_0}^*$  of $V_0^*$ such that $B_0^{(w)}\in \mathcal{S}_{n_0-n_{-1},\dots, n_{-s+1}-n_{-s}, n_{-s}}^T$.  Let $g$ be the unique invertible matrix given by $g^{-1}_{ij}=w_i^*(v_j)$, which relates these bases.  The map sending a handsaw representation to $(X,g)$ is our desired inverse isomorphism $  M(G_Q,\mathbf{n} )\to \gamma(\mathbf{n})$. 
This completes the proof of the isomorphism.

Now, we turn to the action of $\M(\GL(V_Q))$ on the bow variety.  By the factorization property \cite[Prop 3.6]{nakajimaCherkisBow2017}, it suffices to prove that the formulas \cref{eq:bow-action} are correct for the case where $V_0$ is 1-dimensional and other $V_i$ are 0.  In this case, $B_0$ acts by a complex scalar, which we denote by the same symbol, and $Q$ must be a non-zero constant. This induces an isomorphism to $\C\times \C^{\times}$.  On the other hand, our isomorphism gives the $1\times 1$ matrix $X=(B_0)$, and $g=b_0a_0$.   Thus, the action described multiplies $g$ by the complex number $Q$ as indicated in \cref{eq:bow-action}.  
\end{proof}

Note that \cref{lemma that relevant Coulomb branches are Kostant sections} shows that if $h_1=\cdots =h_n=1$, then $\M_{\mathbf{h}}\cong \Kost{\GL_n}\cong \fc{\GL_n}\times GL_n$.   This space still carries a Hamiltonian $GL_n$ action by right multiplication; the moment map, and thus the action, can be easily constructed from the Coulomb perspective as well, using the formulas \cite[(24-26)]{WeekesCoulomb}. This action is frequently used in the physics literature: it is the ``symmetry enhancement'' arising from the fact that all but one of the nodes of the quiver are ``balanced'' (see, for example, \cite[\S 7]{DancerHananyKirwanSymplecticDualityandImplosions}).
\begin{Theorem}\label{mirror Sicilian}
If $\mathcal{T}_0$ is a mirror Sicilian theory for $GL_n$ with $g=0$, and $\mathcal{T}_1$ is the same theory with a puncture for the nilpotent with Jordan type $\mathbf{h}$ added, then 
\[\M_C(\mathcal{T}_1)\cong \M_C(\mathcal{T}_0)\times_{\fc{\GL_n}}^{\M(\GL_n)}\M_{\mathbf{h}}.\]
If $h_1=\cdots =h_n=1$, then 
\[\M_C(\mathcal{T}_1)\cong \M_C(\mathcal{T}_0)\times^{\M(\GL_n)}GL_n\] where we view $\M_C(\mathcal{T}_0)\times GL_n$ as a $\fc{\GL_n}$-space by projection to the first factor and each fiber of $\M(\GL_n)$ acts on $\GL_n$ by left multiplication of the regular centralizer. 
\end{Theorem}

Quotienting by $\M(\GL_n)$ allows a more flexible calculus than \cite{beniniMirrors3d2010}, where they focus on building Riemann surfaces up from pairs of pants, interpreting the gluing of two punctures with regular label as the symplectic quotient by $GL_n$. We also note that the spaces appearing in \cref{mirror Sicilian} naturally obtain a right action by $\GL_n$ through right multiplication.

\subsubsection{Conjectural Generalization for Arbitrary $G$}For general $G$, an approach to studying mirror Sicilian theories was described by  Moore--Tachikawa \cite{MooreTachikawaOn2dTQFTsWhoseValuesareHolomorphicSymplecticVarieties}.  We can interpret the balanced product over $\M(G^{\vee})$ as the effect on the Higgs branches of the connected sum of surfaces.  
By \cite[(3.10)]{MooreTachikawaOn2dTQFTsWhoseValuesareHolomorphicSymplecticVarieties}, the space $\Kost{G}$ corresponds to a disk in the Moore-Tachikawa TQFT.  While capping off a boundary component corresponds to the symplectic quotient $(\Kost{G}\times X) /\!\!/G$, using the right $G$-action on $\Kost{G}$ and the $G$-action on $X$ corresponding to the boundary component, we can instead view $\Kost{G}\times_{\fc{G^{\vee}}}^{\M(G^{\vee})}X$ as connected sum with a disk, i.e. cutting out a disk and creating a new boundary component.  

This description is readily generalized to a conjecture where we ignore the gluability hypothesis, consider more general dimension vectors, and consider an arbitrary gauge group $G$:

\begin{Conjecture}\label{Conjecture for Gluing Everything}\hfill 
\begin{enumerate}
    \item 
    The Coulomb branch of the mirror Sicilian theory of an unpunctured Riemann surface of genus $g$ is 
    \[\M_{g-\operatorname{loop}}^{(G)}=\M(G^{\vee}, (\mathfrak{g}^{\vee})^{\oplus g})\cong \M(T^{\vee}, (\mathfrak{g}^{\vee})^{\oplus g-1})/W.\]
    The $\M(G^{\vee})$-action on $\M_{g-\operatorname{loop}}^{(G)}$ is induced by the map $\M(G^{\vee})_{\LT^{\vee}}\to \M(T^{\vee})\cong \LT^{\vee}\times T$, which acts on $\M(T^{\vee}, (\mathfrak{g}^{\vee})^{\oplus g-1})$ by the usual $T$-action.   
    \item The Coulomb branch $\M_e$ of the mirror Sicilian theory of a Riemann surface of genus $0$ with a single puncture with label $e$ is written as the preimage \[\M_e=\mu^{-1}(\mathcal{S}_{\mathsf{reg}}\times \mathcal{S}_e)=\{(X,g)\in \mathcal{S}_{\mathsf{reg}}\times G |-\operatorname{ad}_{g^{-1}}X\in \mathcal{S}_e \}\] under the moment map $\mu\colon T^*G\to \mathfrak{g}^*\times \mathfrak{g}^*$ for the left and right action of $G$ of $\mathcal{S}_{\mathsf{reg}}\times \mathcal{S}_e$ where $\mathcal{S}_e$ is the Slodowy slice for a nilpotent $e$.  Under the identification of $\M(G^{\vee})$ with the regular centralizers of $G$, the action on $\M_e$ is by left multiplication on $g$.
    \item The Coulomb branch of the mirror Sicilian theory for $G$ on a Riemann surface of genus $g$ with punctures labeled by nilpotents $e^{(1)},\dots, e^{(p)}$, that is, the Higgs branch for the Sicilian theory with the same data, is isomorphic to the quotient of
    \[\M_{g-\operatorname{loop}}^{(G)}\times_{\fc{G^{\vee}}}\M_{e^{(1)}}\times_{\fc{G^{\vee}}}\cdots \times_{\fc{G^{\vee}}}\M_{e^{(p)}} \cong \M_{g-\operatorname{loop}}^{(G)}\times_{\fc{G^{\vee}}} \{(X,g_1,\dots, g_p)\in \mathcal{S}_{\mathsf{reg}}\times G^p\mid \operatorname{ad}_{g_i^{-1}}X\in \mathcal{S}_{e^{(i)}}\}\]
    by the natural action of \[\M(G^{\vee})^{p+1}_{0}:=\{(a_0,\dots, a_{p})\in \M(G^{\vee})\times_{\fc{G^{\vee}}}\cdots  \times_{\fc{G^{\vee}}}\M(G^{\vee}) \mid a_0\cdots a_{p}=1\}.\]
\end{enumerate}
\end{Conjecture}

\begin{Remark} The proposals for the spaces assigned to basic pieces have already appeared in the literature, so the new part of \cref{Conjecture for Gluing Everything} is how to assemble these using connect sum. We now specifically highlight instances of \cref{Conjecture for Gluing Everything} which have already appeared in the literature, both physically and mathematically:
\begin{enumerate}
    \item We first note that the Coulomb branch for a quiver with one vertex and $g$ loops can be computed from
\cite[Theorem 3]{TelemanCoulombBranchesforQuaternionicRepresentations}: the result is the symmetrized abelian Coulomb branch \[\M_{g-\operatorname{loop}}^{(n)}=\M(Q,n_i)=\M(\Gm^{n_i}, \End(\mathbb C^{n_i})^{\oplus g-1})/S_n.\]  The important special special case of $g=1$ is shown in \cite[Proposition 5.24]{BravermanFinkelbergNakajimaRingObjectsIntheEquivariantDerivedSatakeCategoryArisingFromCoulombBranches}, where this quotient is $\M(Q,n_i)=\Sym^n(T^*\Gm)$. Thus, \cref{Conjecture for Gluing Everything}(1) holds for $G = \GL_n$.
    \item \cref{Conjecture for Gluing Everything}(1) can be interpreted by a special case of mirror symmetry for the theory $T[G]$; see \cite[Remark 5.3]{BravermanFinkelbergNakajimaRingObjectsIntheEquivariantDerivedSatakeCategoryArisingFromCoulombBranches}  (note that we have switched $G$ and $G^{\vee}$ from their notation) and the references therein.
    \item \cref{Conjecture for Gluing Everything}(2) is easily generalized to the case of two punctures with labels $e_1,e_2$, where we consider $\mu^{-1}(\mathcal{S}_{e_1}\times \mathcal{S}_{e_2})$. This is already predicted in \cite[Figure 51(c)]{gaiottoSDuality2009}.
    \item In light of \cref{Intro Statement of Functoriality Theorem}, we can interpret \cref{Conjecture for Gluing Everything} as describing the Higgs branch of the Sicilian theory as the Coulomb branch of the theory $\operatorname{Hyp}((\mathfrak{g}^{\vee})^{\oplus g})\times T_{e_1}(G^{\vee}) \times \cdots \times T_{e_n}(G^{\vee})$ with gauge group restricted to $G^{\vee}$.  It's worth noting that the Higgs branch of this theory for $G=\GL_n$ is identified with the Coulomb branch of the Sicilian theory in \cite[(A.11)]{beniniMirrors3d2010}, so this gives the dual identification suggested in {it loc.\ cit.}
    \end{enumerate}

\end{Remark}

\subsection{Particular examples}For some partitions of $n$, the Coulomb branch associated to the quiver $Q_{\vec{m}}$ has a known description which is not a priori related to $\overline{T^*(G/U_P)}$. We highlight some cases of interest here. 

\subsubsection{The Moore-Tachikawa variety for $\SL_3$} In the special case that our partition is $4 = 2 + 2$ (originally suggested to us by Amihay Hanany) we obtain the following description of $\overline{T^*(\SL_4/U_P)}$. In this case, the quiver $Q_{\vec{m}}$ gives the quiver for the \textit{Moore-Tachikawa variety} for $\SL_3$, see \cite[Section 5]{BravermanFinkelbergNakajimaRingObjectsIntheEquivariantDerivedSatakeCategoryArisingFromCoulombBranches}, and so \cref{Full Statement Coulomb Branches of Particular Quivers} implies that the Moore-Tachikawa variety for $\SL_3$ is isomorphic to $\overline{T^*(\SL_4/U_{(2, 2)})}$ where \[U_{(2, 2)} := \begin{pmatrix}1 & 0 & \ast & \ast \\
0 & 1 & \ast & \ast \\ 
0 & 0 & 1 & 0 \\ 0 & 0 & 0 & 1\end{pmatrix}\] is the unipotent radical of the standard parabolic subgroup of $\SL_4$ associated to this partition. On the other hand, the Moore-Tachikawa variety for $\SL_3$ is also known to be isomorphic to the closure $\mathcal{N}_{\mathrm{min}}$ of the minimal nonzero nilpotent orbit of $\mathfrak{e}_6$, see \cite[Section 5(iv)]{BravermanFinkelbergNakajimaRingObjectsIntheEquivariantDerivedSatakeCategoryArisingFromCoulombBranches}. Therefore, we obtain the following result, which can be viewed as a \lq parabolic analogue\rq{} of \cite[Theorem 5.2]{JiaTheGeometryOfTheAffineClosureOfCotangentBundleOfBasicAffineSpaceForSLn}:

\begin{Corollary}\label{Exceptional Iso for Moore Tachikawa Variety}
There is an isomorphism $\mathcal{N}_{\mathrm{min}} \cong \overline{T^*(\SL_4/U_{(2, 2)})}$ of affine varieties.
\end{Corollary}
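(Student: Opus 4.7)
The plan is to chain together three identifications, none of which require new work beyond carefully matching quiver conventions. First, I would apply \cref{Full Statement Coulomb Branches of Particular Quivers} with $n = 4$ and $\vec{m} = (2,2)$, which immediately yields
\[
\overline{T^*(\SL_4/U_{(2,2)})} \cong \mathcal{M}_C(\GL_{Q_{(2,2)}}/\mathbb{G}_m, \bfN_{Q_{(2,2)}}).
\]

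Second, I would unpack the definition of $Q_{\vec{m}}$ in this case to see that $Q_{(2,2)}$ coincides with the quiver underlying the Moore--Tachikawa construction for $\SL_3$. With $n = 4$, the horizontal chain has vertices labeled $1, 2, 3$; with $l = 2$ legs, each of length $m_i = 2$ emerging from the vertex labeled $n-1 = 3$, the resulting graph is the $E_6$ Dynkin diagram with the balanced dimension vector $(1, 2, 3, 2, 1)$. This is precisely the star-shaped quiver appearing in the Moore--Tachikawa description of $\SL_3$ as treated in \cite[Section 5]{BravermanFinkelbergNakajimaRingObjectsIntheEquivariantDerivedSatakeCategoryArisingFromCoulombBranches}, and the overall central $\mathbb{G}_m$ we quotient by matches the scalar action that must be removed to obtain the $\PGL_n$-style gauge group used there.

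Third, I would invoke the identification from \cite[Section 5(iv)]{BravermanFinkelbergNakajimaRingObjectsIntheEquivariantDerivedSatakeCategoryArisingFromCoulombBranches} (together with the independence of the Coulomb branch on edge orientations, already used elsewhere in the paper) that the Moore--Tachikawa variety for $\SL_3$ is isomorphic to the closure $\mathcal{N}_{\mathrm{min}}$ of the minimal nonzero nilpotent orbit in $\mathfrak{e}_6$. Composing the three isomorphisms above delivers the desired identification $\mathcal{N}_{\mathrm{min}} \cong \overline{T^*(\SL_4/U_{(2,2)})}$.

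There is no real obstacle: the argument is bookkeeping, with the only mild check being that the quiver $Q_{(2,2)}$ produced by our general recipe matches, up to edge orientation and vertex labeling, the balanced $E_6$-shaped quiver used in the Moore--Tachikawa setup. Since orientations of quiver edges do not affect the associated Coulomb branch and the central $\mathbb{G}_m$ subgroup is handled by the quotient appearing in \cref{Full Statement Coulomb Branches of Particular Quivers}, this matching is automatic.
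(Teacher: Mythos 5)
Your argument is exactly the paper's: apply \cref{Full Statement Coulomb Branches of Particular Quivers} with $n=4$, $\vec{m}=(2,2)$, recognize $Q_{(2,2)}$ as the star-shaped Moore--Tachikawa quiver for $\SL_3$, and quote the identification of that Coulomb branch with $\mathcal{N}_{\mathrm{min}}\subset\mathfrak{e}_6$ from \cite[Section 5(iv)]{BravermanFinkelbergNakajimaRingObjectsIntheEquivariantDerivedSatakeCategoryArisingFromCoulombBranches}. One cosmetic correction: the underlying graph is the \emph{affine} $E_6$ diagram (seven nodes --- a trivalent node labelled $3$ carrying three legs each labelled $2,1$), so the dimension vector is $(3;2,1;2,1;2,1)$ rather than the five-entry $(1,2,3,2,1)$ you wrote; your structural description of the chain and legs is otherwise correct and the matching with the $\SL_3$ Moore--Tachikawa quiver goes through.
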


Shortly after the first version of this document was posted, an alternative and more direct proof of \cref{Exceptional Iso for Moore Tachikawa Variety} was given in \cite{JiaMinimalNilpotentOrbitsOfTypeDandE}.

\subsubsection{Based maps isomorphisms} In the case that our partition is $n := 3 = 2 + 1$, respectively $n := 4 = 3 + 1$, the underlying graph of our quiver $Q_{\vec{m}}$ can be identified with the Dynkin diagram of type $D_{10}$, respectively $E_7$. In this case, \[U_{(2, 1)} := \begin{pmatrix}1 & 0  & \ast \\
0 & 1 & \ast \\ 
0 & 0 & 1\end{pmatrix}\text{, respectively }U_{(3, 1)} := \begin{pmatrix}1 & 0 & 0 & \ast \\
0 & 1 & 0 & \ast \\ 
0 & 0 & 1 & \ast \\ 0 & 0 & 0 & 1\end{pmatrix}\] gives the unipotent radical of the associated standard parabolic subgroup of $\GL_3$, respectively $\GL_4$. By \cite[Theorem 3.1]{BravermanFinkelbergNakajimaQuiver}, the Coulomb branch of the respective quiver can be identified with moduli space $\mathring{Z}_G^{\gamma}$ of based rational maps from $\mathbb{P}^1$ to the flag variety of the simple group $G$ whose associated Dynkin diagram is type $D_{10}$, respectively $E_7$, of some fixed degree $\gamma$. More precisely, letting $\alpha$, respectively $\beta$, denote the sum of simple roots with multiplicites labeled by $Q_{\vec{m}}$, using \cref{final main theorem}, we obtain the following corollary:

\begin{Corollary}
There are isomorphisms \[\overline{T^*(\GL_3/U_{(2,1)})} \cong \mathring{Z}^{\alpha}_{\mathrm{SO}(10)} \text{ and } \overline{T^*(\GL_4/U_{(3,1)})} \cong \mathring{Z}_{E_7}^{\beta}\] of affine varieties.
\end{Corollary}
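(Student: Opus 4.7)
The strategy is a direct concatenation of two results: apply \cref{final main theorem} to express the partial implosion as the Coulomb branch of the quiver gauge theory $(\GL_{Q_{\vec{m}}}, \bfN_{Q_{\vec{m}}})$, and then invoke \cite[Theorem 3.1]{BravermanFinkelbergNakajimaQuiver}, which identifies the Coulomb branch of a simply-laced finite-type quiver gauge theory (without framing) with a space of based rational maps $\mathbb{P}^1 \to G/B$. Since the definition of $\bfN_{Q_{\vec{m}}}$ involves no flavor or framing vertices, our Coulomb branch is of precisely the form to which BFN's result applies, provided the underlying graph of $Q_{\vec{m}}$ is an honest simply-laced Dynkin diagram.

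The key intermediate step is therefore a purely combinatorial identification of the underlying graph of $Q_{\vec{m}}$ with the advertised Dynkin diagram. For $\vec{m} = (2,1)$ with $n = 3$, the quiver has $(n-1) + m_1 + m_2 = 2 + 2 + 1 = 5$ vertices; the vertex labeled $n-1 = 2$ is the unique vertex of degree three, and the three branches emanating from it have lengths $2$, $1$, $1$, exhibiting the underlying graph as the Dynkin diagram of $\mathrm{SO}(10)$. For $\vec{m} = (3,1)$ with $n = 4$, the quiver has $3 + 3 + 1 = 7$ vertices, with three branches of lengths $2$, $3$, $1$ emanating from the unique degree-three vertex, which is the $E_7$ Dynkin diagram. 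In both cases $Q_{\vec{m}}$ is a tree, so there are no loops or parallel edges to worry about.

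Under these identifications, BFN's theorem identifies $\mathcal{M}_C(\GL_{Q_{\vec{m}}}, \bfN_{Q_{\vec{m}}})$ with $\mathring{Z}_G^{\gamma}$, where $\gamma = \sum_v n_v \alpha_v$ is the sum over vertices of $Q_{\vec{m}}$ of the corresponding simple roots with multiplicities equal to the dimension labels. Since the statement of the corollary defines $\alpha$ and $\beta$ by exactly this procedure, combining this with \cref{final main theorem} completes the proof. The only non-routine point is bookkeeping: one must check that the graph isomorphism, the orientation convention (irrelevant for Coulomb branches), and the identification of the dimension vector with $\gamma$ are all compatible with the conventions of \cite{BravermanFinkelbergNakajimaQuiver}; since the labels on $Q_{\vec{m}}$ are precisely the coefficients $n_v$ and the graph isomorphism transports them to the simple roots, this is immediate.
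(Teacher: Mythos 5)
Your proposal is correct and follows exactly the route the paper takes: identify the underlying graph of $Q_{(2,1)}$ (resp.\ $Q_{(3,1)}$) with the $D_5=\mathrm{SO}(10)$ (resp.\ $E_7$) Dynkin diagram, apply \cite[Theorem 3.1]{BravermanFinkelbergNakajimaQuiver} to identify the unframed finite-type quiver Coulomb branch with the based maps space of degree $\gamma=\sum_v n_v\alpha_v$, and combine with \cref{final main theorem}. The combinatorial bookkeeping (vertex counts $5$ and $7$, branch lengths $\{2,1,1\}$ and $\{3,2,1\}$ at the trivalent node) is also right.
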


\appendix
\section{Connection to geometric Satake}\label{Appendix to Avoid Derived Satake in Main Body}

In this appendix, we discuss the connection of our results to the  \textit{derived geometric Satake equivalence} \cite{BezrukavnikovFinkelbergEquivariantSatakeCategoryandKostantWhittakerReduction} and \textit{relative Langlands duality} \cite{BenZviSakleredisVenkateshRelativeLanglandsDuality}. 

\subsection{Background on geometric Satake}

We let $\sphericalHeckeCatForTHISGROUP{G}$ denote the spherical Hecke category
for $G$ (as in, for example, \cite[Section 6.6]{BenZviSakleredisVenkateshRelativeLanglandsDuality}) and assume that $\mathcal{A} := \AGN$ is the algebra object of $\sphericalHeckeCatForTHISGROUP{G}$, constructed in \cite[\S 2(iv)]{BravermanFinkelbergNakajimaRingObjectsIntheEquivariantDerivedSatakeCategoryArisingFromCoulombBranches}, whose cohomology in the abelian category $H^*_G(\mathrm{pt})$-mod gives the Coulomb branch as a $H^*_G(\mathrm{pt})$-algebra. Observe that, since the affine Grassmannian $\Gr_G$ for $G$ is ind-proper, we have a canonical map $t_{*}(\mathcal{A}) \to t_*t^!t_*(\mathcal{A})$ where $t$ is the terminal map; taking cohomology, we obtain a graded map \begin{equation}\label{Explicit Coaction as Map of Graded Vector Spaces}
H^*_G(\mathcal{A}) \xrightarrow{} H^*_G(t_*t^!t_*(\mathcal{A})) \cong \mathcal{O}(\M_C(G, 0)) \otimes_{H_G^*(\mathrm{pt})} H^*_G(\mathcal{A})
\end{equation} which one can readily check equips $H^*_G(\mathcal{A})$ with the structure of a graded comodule for $\mathcal{O}(\M_C(G, 0))$. 

In particular, this construction gives an action of $\mathcal{M}_C(G, 0)$ on $\M_C(G, \bfN)$ and, since this action extends the usual group multiplication on $\M_C(G, 0)$, we see that the uniqueness assertion of \cref{Action on General Coulomb Branch by Gluing} implies that this action agrees with the action of $\M_C(G, 0)$ on $\M_C(G, \mathbf{N})$ constructed in \cref{Action on General Coulomb Branch by Gluing}. Letting $\semisimpleComplexesOnGr$ denote the full subcategory of objects of $\overline{\mathcal{H}_G}$ which are (possibly infinite) direct sums of IC sheaves, one can similarly equip the cohomology $H^*_{G_{\mathcal{O}}}(C)$ of any $C \in \semisimpleComplexesOnGr$ with the structure of a graded comodule for $\mathcal{O}(\M_C(G, 0))$. Therefore, we obtain a functor \[\semisimpleComplexesOnGr \xrightarrow{H^*_{G_{\mathcal{O}}}(-)} H_{BM}^{G_{\mathcal{O}}}(\Gr_G)\mathrm{-comod}^{\mathbb{G}_m}\] to the category of graded representations of $\M_C(G, 0)$. Recall that there is a fully faithful functor \begin{equation}\label{Fully Faithful Inclusion for BM Comodules}\mathfrak{o}_{BM}: H_{BM}^{G_{\mathcal{O}}}(\Gr_G)\mathrm{-comod}^{\mathbb{G}_m} \subseteq H^*_{G_{\mathcal{O}}}(\Gr_G)\mathrm{-mod}^{\mathbb{G}_m}\end{equation} which is the identity on objects and morphisms; see \cite[Theorem 4.3]{BrzezinskiWisbauerCoringsandComodules}. The arguments of \cite[Proposition 2.7]{YunZhuIntegralHomologyofLoopGroupsviaLanglandsDualGroups} (see also \cite[Section 3.5]{BezrukavnikovFinkelbergEquivariantSatakeCategoryandKostantWhittakerReduction}) show that the composite of the functor $H^*_{G_{\mathcal{O}}}(-)$ with the functor \cref{Fully Faithful Inclusion for BM Comodules} is monoidal. Since \cref{Fully Faithful Inclusion for BM Comodules} is fully faithful, we obtain a monoidal structure on the functor $H^*_{G_{\mathcal{O}}}(-)$ itself.


\newcommand{\LGvd}{\mathfrak{g}^*}
As above, we let $G^{\vee}$ denote the Langlands dual group of $G$ and denote by $\LGc$ its Lie algebra. Let $\QCoh(\LGcd)^{G^{\vee} \times \mathbb{G}_m}_{\mathrm{fr}}$ denote the full subcategory of the category $\QCoh(\LGcd)^{G^{\vee} \times \mathbb{G}_m}$ of graded $G^{\vee}$-representations equipped with a compatible $\Sym(\LGc)$-module structure which are isomorphic to objects of the form $\Sym(\LGc) \otimes V$ for some possibly infinite-dimensional representation $V$ of $G^{\vee}$. The group $G^{\vee}$ canonically acquires a pinning via the Tannakian formalism; we view $\cG := \LT\sslash W\cong \LGc^*\sslash G^{\vee}$ as a closed subscheme of $\LGc^*$ as the Kostant section associated to this pinning via the closed embedding $\iota_{G^{\vee}}$. Note that $\iota_{G^{\vee}}(t^2X)=t^2\tilde{\rho}(t)\cdot \iota_{G^{\vee}}(X)$ for $t\in \Gm$ where  $\tilde{\rho}(t)$ is the unique cocharacter to the derived subgroup of $G$ which acts with weight 2 on all simple root spaces.  
Let \[J_{G^{\vee}}=\{(X,g)\in \cG\times G^{\vee}|\operatorname{Ad}^*_g(\iota_{G^{\vee}}X)=\iota_{G^{\vee}}X\}\] denote the centralizer of this Kostant section; this has a natural $\Gm$-action induced by the map $t\cdot (X,g)=(t^2X, \tilde{\rho}(t)g\tilde{\rho}(t)^{-1})$, compatible with the same action on $\cG\times G^{\vee}$.  
The restriction of a $G^{\vee}\times\Gm$-equivariant sheaf to the Kostant slice naturally acquires the structure of a representation of the centralizer $J_{G^{\vee}}$ of this Kostant section with compatible $\Gm$-action, and so from this restriction we obtain a symmetric monoidal functor \[\kappa: \QCoh(\LGc^*)^{G^{\vee} \times \mathbb{G}_m}_{\mathrm{fr}} \to \mathcal{O}(J_{G^{\vee}})\mathrm{-comod}^{\mathbb{G}_m}\] to the category of graded representations of $J_{G^{\vee}}$. Moreover, via the isomorphism \cref{BFM Iso}, we may view $\kappa$ as a functor to the category of graded representations of $\M_C(G, 0)$. 

\subsection{Proof of \cref{lemma that relevant Coulomb branches are Kostant sections}}

With this notation, we may now state the following consequence of the derived geometric Satake equivalence, based on the results of \cite{BezrukavnikovFinkelbergEquivariantSatakeCategoryandKostantWhittakerReduction}:

\begin{Theorem}\label{Satake On Free Objects with Compatibility} There is a monoidal equivalence of categories $\tilde{S}: \QCoh(\LGc^*)^{G^{\vee} \times \mathbb{G}_m}_{\mathrm{fr}} \xrightarrow{\sim} \mathcal{I}$ and a monoidal natural isomorphism $H^*_{G_{\mathcal{O}}}(\tilde{S}(-)) \cong \kappa$. 
\end{Theorem}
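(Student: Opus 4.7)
The plan is to deduce both claims essentially as a restriction of the derived geometric Satake equivalence of Bezrukavnikov--Finkelberg \cite{BezrukavnikovFinkelbergEquivariantSatakeCategoryandKostantWhittakerReduction}. That equivalence provides a monoidal equivalence between (a dg enhancement of) the $G_\mathcal{O}\rtimes \Gm$-equivariant derived category of $\Gr_G$ and an appropriate dg category of $\Gm$-graded $G^{\vee}$-equivariant dg modules over $\Sym(\LGc)=\mathcal{O}(\LGcd)$, where the $\Gm$-grading corresponds on the sheaf side to loop rotation equivariance and on the algebraic side to the natural polynomial grading (with $\LGc$ in degree $2$). Under its restriction to the hearts one recovers the classical geometric Satake equivalence, sending $\mathrm{IC}_\lambda$ to the irreducible $G^{\vee}$-representation $V_\lambda$, and therefore identifying $\mathrm{IC}_\lambda$ with the free module $\Sym(\LGc)\otimes V_\lambda$ on the algebraic side.

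I would first restrict this equivalence to the full subcategories in question. Both $\semisimpleComplexesOnGr$ and $\QCoh(\LGcd)^{G^{\vee}\times\Gm}_{\mathrm{fr}}$ are by definition (possibly infinite) direct sums of objects that correspond to one another under derived Satake, so the correspondence $\mathrm{IC}_\lambda \leftrightarrow \Sym(\LGc)\otimes V_\lambda$ extends to a monoidal equivalence $\tilde{S}\colon \QCoh(\LGcd)^{G^{\vee}\times\Gm}_{\mathrm{fr}}\xrightarrow{\sim}\semisimpleComplexesOnGr$. That the respective tensor structures preserve these subcategories follows from the semisimplicity of the classical Satake category on the one hand and the obvious closure of free modules under tensor products on the other.

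Next, I would verify the natural isomorphism $H^*_{G_\mathcal{O}}(\tilde{S}(-))\cong \kappa$. The crucial input is the Kostant--Whittaker reduction theorem of loc.\ cit., which identifies the equivariant cohomology functor $H^*_{G_\mathcal{O}}$ on the sheaf side with the composition of pullback to the Kostant section $\iota_{G^{\vee}}\colon \cGc\hookrightarrow \LGcd$ and the residual action of the centralizer group scheme $J_{G^{\vee}}$ on this restriction. Combined with the identification \cref{BFM Iso} of $J_{G^{\vee}}$ with $\M_C(G,0)$, this is precisely the functor $\kappa$. The monoidality of the resulting natural isomorphism follows from the monoidality of both $H^*_{G_\mathcal{O}}$, established in the paragraph preceding the theorem via the argument of Yun--Zhu, and $\kappa$, manifest from its construction as pullback.

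The principal obstacle is book-keeping the grading, shifting, and dg-structure conventions in the formulation of derived Satake in \cite{BezrukavnikovFinkelbergEquivariantSatakeCategoryandKostantWhittakerReduction}, making sure that on free/semisimple objects the equivalence is entirely abelian (no higher Exts contribute) and that the loop-rotation grading matches the polynomial grading with whatever shifts are needed to render the correspondence $\mathrm{IC}_\lambda\leftrightarrow \Sym(\LGc)\otimes V_\lambda$ degree-preserving rather than only up to cohomological shift. Once these conventions are fixed, both the equivalence $\tilde{S}$ and its compatibility with $\kappa$ are essentially formal consequences of the theorems of loc.\ cit.
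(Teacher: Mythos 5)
Your approach is the same as the paper's: both deduce the theorem by citing the derived geometric Satake and Kostant--Whittaker reduction results of Bezrukavnikov--Finkelberg, with the equivalence $\tilde{S}$ and the identification of $H^*_{G_{\mathcal{O}}}$ with restriction to the Kostant section coming directly from \cite[Theorem 4]{BezrukavnikovFinkelbergEquivariantSatakeCategoryandKostantWhittakerReduction}. The one substantive step you elide is that the cited theorem produces the natural isomorphism only after composing both functors with forgetful functors into \emph{module} categories (over $H^*_{G_{\mathcal{O}}}(\Gr_G)$ and over $\mathcal{O}(J_{G^{\vee}})^*$ respectively), whereas the statement asserts an isomorphism of functors valued in \emph{comodule} categories, i.e.\ genuine graded representations of the group scheme $\M_C(G,0)\cong J_{G^{\vee}}$. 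The paper closes this gap by observing that the forgetful functors from comodules to modules are fully faithful (identity on objects and morphisms, citing Brzezinski--Wisbauer), so a monoidal natural isomorphism after forgetting lifts uniquely to one before forgetting. Without some such argument, your identification ``this is precisely the functor $\kappa$'' only holds at the level of underlying modules, not as comodule-valued functors; the rest of your outline (restriction of the equivalence to semisimple/free objects, monoidality of $H^*_{G_{\mathcal{O}}}$ via Yun--Zhu) matches the paper.
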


\begin{proof}
If we define $\mathcal{O}(J_{G^{\vee}})^* := \Hom_{H_G^*(\mathrm{pt})}(\mathcal{O}(J_{G^{\vee}}), H_G^*(\mathrm{pt}))$, then identical arguments as in the existence of the functor \cref{Explicit Coaction as Map of Graded Vector Spaces} give a fully faithful functor \begin{equation}\label{Fully Faithful Functor B-Side}\mathfrak{o}_{J_{G^{\vee}}}: \mathcal{O}(J_{G^{\vee}})\mathrm{-comod}^{\mathbb{G}_m} \subseteq \mathcal{O}(J_{G^{\vee}})^*\mathrm{-mod}^{\mathbb{G}_m}\end{equation} which is the identity on objects and morphisms. As surveyed in \cite[Section 2.6]{BezrukavnikovFinkelbergEquivariantSatakeCategoryandKostantWhittakerReduction},  there are equivalences of monoidal categories \begin{equation}\label{Directly Stated Equivalencse of Categories by BF}H^*_{G_{\mathcal{O}}}(\Gr_G)\mathrm{-mod}^{\mathbb{G}_m} \cong \QCoh^{\mathbb{G}_m}(T^*(\check{\LT}^*\sslash W)) \cong \mathcal{O}(J_{G^{\vee}})^*\mathrm{-mod}^{\mathbb{G}_m}\end{equation} provided by \cite[Theorem 1]{BezrukavnikovFinkelbergEquivariantSatakeCategoryandKostantWhittakerReduction} and \cite{ginzburg2000perversesheavesloopgroup}, respectively, where $\check{\LT}^*$ is the dual Lie algebra to the maximal torus $T^{\vee}$ given by our pinning. Now, by taking the ind-extension of the monoidal equivalence in \cite[Theorem 4]{BezrukavnikovFinkelbergEquivariantSatakeCategoryandKostantWhittakerReduction}, we obtain a monoidal equivalence $\tilde{S}$ such that, identifying the monoidal categories in \cref{Directly Stated Equivalencse of Categories by BF}, there is a monoidal natural isomorphism \[\mathfrak{o}_{BM}\circ H^*_{G_{\mathcal{O}}}(\tilde{S}(-)) \cong \mathfrak{o}_{J_{G^{\vee}}} \circ \kappa(-).\] However, since the functors $\mathfrak{o}_{BM}$ and $\mathfrak{o}_{J_{G^{\vee}}}$ are fully faithful, we obtain our desired monoidal natural isomorphism.
\end{proof}

\begin{proof}[Proof of \cref{lemma that relevant Coulomb branches are Kostant sections}]
Let \begin{align*}
    \overline{p}_n: &\GL(V_{A_n})/\mathbb{G}_m \to \PGL_{n}&p_n:& \GL(V_{A_n})\to \GL_{n}\\ 
    \overline{q}_{\vec{m}}: &\GL(V_{A_{\vec{m}}})/\mathbb{G}_m \to \overline{L} &q_{\vec{m}}:&\GL(V_{A_{\vec{m}}}) \to L
\end{align*} denote the respective maps given or induced by the obvious projection map. We let $\mathcal{A}_R^{G^{\vee}}$ denote the algebra object given by the image under derived geometric Satake of $\mathcal{O}(T^*(G^{\vee}))$, as discussed in \cite[Section 5(vii)]{BravermanFinkelbergNakajimaRingObjectsIntheEquivariantDerivedSatakeCategoryArisingFromCoulombBranches}. 
Recall that \cite[Theorem 2.11]{BravermanFinkelbergNakajimaRingObjectsIntheEquivariantDerivedSatakeCategoryArisingFromCoulombBranches} gives an isomorphism
\begin{equation}\label{Isomorphisms of Algebra Objects for SLn}\overline{p}_{n*}(\mathcal{A}_{\GL_{A_{n}}/\mathbb{G}_m, \mathbf{N}_{A_n}}) \cong \mathcal{A}_{R}^{\SL_{n}}\end{equation}
of algebra objects in $\sphericalHeckeCatForTHISGROUP{\PGL_n}$.  Letting $q$ denote the composite \[\prod_{i = 1}^l\GL_{Q_{m_i}}/\mathbb{G}_m \xrightarrow{\prod\overline{p}_{m_i}} \prod_{i = 1}^l\PGL_{m_i} =: \PGL_{\vec{m}}\] we may take the exterior product of this isomorphism and obtain an isomorphism \begin{equation}\label{Isomorphisms of Algebra Objects for Product of PGLs}q_*(\mathcal{A}_{\PGL_{\vec{m}}, \mathbf{N}_{A_{\vec{m}}}}) \cong \mathcal{A}_{R}^{\PGL_{\vec{m}}}\end{equation} of algebra objects in $\sphericalHeckeCatForTHISGROUP{\PGL_{\vec{m}}}$. Now observe that if $H \to H'$ is a central isogeny, then the pullback by the induced map $\Gr_H \to \Gr_{H'}$ maps $\mathcal{A}_R^{H'}$ to $\mathcal{A}_R^H$. Therefore, by applying base change of algebra objects \cite[Proposition 3.3]{GannonWilliamsDifferentialOperatorsOnBaseAffineSpaceofSLnandQuantizedCoulombBranches} to the isomorphism \cref{Isomorphisms of Algebra Objects for SLn}, respectively \cref{Isomorphisms of Algebra Objects for Product of PGLs}, it follows that there are isomorphisms \begin{equation}\label{Isomorphisms of Algebra Objects for All But SLn}p_{n*}(\mathcal{A}_{\GL_{A_n} \mathbf{N}_{A_n}}) \cong \mathcal{A}_{R}^{\GL_{n}}\text{, respectively } \overline{q}_{\vec{m}_*}(\mathcal{A}_{\GL_{A_{\vec{m}}}/\mathbb{G}_m,  \mathbf{N}_{A_n}}) \cong \mathcal{A}_R^{\overline{L} } \text{ and }q_{\vec{m}_*}(\mathcal{A}_{\GL_{A_{\vec{m}}},  \mathbf{N}_{A_n}}) \cong \mathcal{A}_R^{L}\end{equation} of algebra objects in $\sphericalHeckeCatForTHISGROUP{\GL_n}$, respectively $\sphericalHeckeCatForTHISGROUP{\overline{L}}$ and $\sphericalHeckeCatForTHISGROUP{L}$. Taking the cohomology of the isomorphisms in \cref{Isomorphisms of Algebra Objects for SLn} and \cref{Isomorphisms of Algebra Objects for All But SLn} we obtain our desired isomorphism. The fact that these isomorphisms are equivariant follows immediately from the compatibility of \cref{Satake On Free Objects with Compatibility}.
\end{proof}

\newcommand{\quotientOfHByK}{G}
\newcommand{\LieAlgOfQuotientOfHByK}{\mathfrak{g}}

\newcommand{\biggroup}{H}
\newcommand{\Lbig}{\mathfrak{h}}
\newcommand{\normalgroup}{K}

\subsection{Recovering the $S$-dual from the Coulomb branch}
\label{section: recovering the S-dual}
In this section, we briefly discuss connections of the above balanced product constrution to \textit{relative Langlands duality}, a mathematical manifestation of 3-d and 4-d mirror symmetry.  Interest in the Coulomb branch in the mathematics literature has been largely driven by its appearance in 3-d mirror symmetry, as a \lq 3-d mirror\rq{} of the \textit{Higgs branch} of a $G$-representation $\bfN$, defined by  \[\M_{H}(G,\bfN) :=T^*\bfN/\!\!/\!\!/\!\!/G := (T^*\bfN\times_{\LGd} \{0\})\sslash G.\]  In other words, the Higgs branch is the hyperk\"ahler reduction of $\mathbf{M} := T^*\bfN$ by $G$.  Physically, 3-d mirror symmetry expresses an equivalence between the 3-d $\mathcal{N}=4$ $\sigma$-models into these spaces--see \cite{bravermanfinkelbergsurvey}, \cite{webster3DimensionalMirrorSymmetry2023}, \cite{HilburnRaskinTatesThesisinDeRhamSetting} for more on this philosophy.  

Relative Langlands duality extends this idea by considering 3-d boundary conditions of 4-d super-Yang-Mills for the group $G$;  physically, we expect each to have a dual 3-d boundary condition of 4-d super-Yang-Mills for the group $G^{\vee}$. Mathematically, this extension was pioneered by Ben-Zvi, Sakellaridis, and Venkatesh \cite{BenZviSakleredisVenkateshRelativeLanglandsDuality}, who argued that in many cases of interest (such as in the hyperspherical case studied in \cite{BenZviSakleredisVenkateshRelativeLanglandsDuality}) the \lq relative Langlands dual\rq{} of a boundary condition that can be written as a $\sigma$-model into some Hamiltonian $G$-scheme $\mathbf{M}$ is a $\sigma$-model into a Hamiltonian $G^{\vee}$-scheme $\mathbf{M}^{\vee}$. Moreover, if $\mathbf{M} = T^*\bfN$ for some smooth affine $G$-variety $\bfN$, the $G^{\vee}$-scheme $\mathbf{M}^{\vee}$ is expected to be affine as well, and to have its ring of functions given by the cohomology of the algebra object corresponding to $\mathcal{A}_{G, \bfN} \in \sphericalHeckeCatForTHISGROUP{G}$ under the derived geometric Satake equivalence; this perspective is used implicitly in \cite{BravermanFinkelbergNakajimaRingObjectsIntheEquivariantDerivedSatakeCategoryArisingFromCoulombBranches} and is stated explicitly in \cite{BenZviSakleredisVenkateshRelativeLanglandsDuality}, \cite{NakajimaSDualOfHamiltonianGSpacesandRelativeLanglandsDuality}, see in particular \cite[Conjecture 8.1.8]{BenZviSakleredisVenkateshRelativeLanglandsDuality} and \cite[Definition (3.6)]{NakajimaSDualOfHamiltonianGSpacesandRelativeLanglandsDuality}. In what follows, we \textit{define} $\mathbf{M}^{\vee}$ through this expectation: in other words, to any finite dimensional $G$-representation $\bfN$, we define its {\bf S-dual} or {\bf relative Langlands dual} as the affine scheme $\mathbf{M}^{\vee}$ whose ring of functions is the cohomology of the algebra object that maps to $\mathcal{A}_{G, \bfN}$ under the derived geometric Satake equivalence; see \cite[Definition (3.6)]{NakajimaSDualOfHamiltonianGSpacesandRelativeLanglandsDuality} for an equivalent formulation of this definition. By construction, $\mathbf{M}^{\vee}$ is an affine $G^{\vee}$-scheme and admits a $G^{\vee}$-equivariant map $\mathbf{M}^{\vee} \to \LGcd$. 


We now pose a conjecture which informally states that the $S$-dual of a $G$-representation is determined by its Coulomb branch. To state this conjecture precisely, we first recall that, as above, we identify $\M(\quotientOfHByK, 0)$ with the sub-group-scheme $J_{\quotientOfHByK^{\vee}}\subset \quotientOfHByK^{\vee}\times \fc{G}$ of the trivial group scheme with fiber given by the Langlands dual $\quotientOfHByK^{\vee}$. From this isomorphism and the discussion of the $\M_C(G, 0)$-action on $\M_C(G, \bfN)$ in \cref{Action for General Coulomb Branches}, we obtain an action of $J_{G^{\vee}}$ on $\M_C(G, \bfN)$. We also observe that if $X$ is an affine scheme over $\fc{\quotientOfHByK}$ with $J_{\quotientOfHByK^{\vee}}$-action, then
\[G^{\vee} \times^{J_{\quotientOfHByK^{\vee}}}X := (\quotientOfHByK^{\vee} \times\fc{\quotientOfHByK})\times_{\fc{\quotientOfHByK}}^{J_{\quotientOfHByK^{\vee}}}X\] naturally acquires a $G^{\vee}$-action and a $G^{\vee}$-equivariant map $\overline{\mu}$ to $\LGcd$. Indeed, the $G^{\vee}$-representation is induced by left multiplication by $G^{\vee}$ on $\quotientOfHByK^{\vee} \times\fc{\quotientOfHByK}$, and the map $\overline{\mu}$ is induced by the map \[ \mu\colon \quotientOfHByK^{\vee} \times X \to \LGc^*\qquad\qquad\mu(q, x)=\operatorname{Ad}_{q}^*\varkappa(x)\] where $\varkappa \colon X\to (\LieAlgOfQuotientOfHByK^{\vee})^*$ is the composition of the structure map to $\LieAlgOfQuotientOfHByK\sslash \quotientOfHByK\cong \LGc^*\sslash \quotientOfHByK^{\vee}$ followed by the inclusion $\iota_{G^{\vee}}$. Note that since the map $\quotientOfHByK^{\vee} \times X\to \quotientOfHByK^{\vee} \times^{J_{\quotientOfHByK^{\vee}}} X$ might not be surjective, the induced map $\overline{\mu}$ may have irregular elements of $\LGc^*$ in its image.

\begin{Conjecture}\label{Recoverability Conjecture}
There is a $G^{\vee}$-equivariant isomorphism $\mathbf{M}^{\vee} \cong G^{\vee} \times^{J_{G^{\vee}}}\M_C(G, \bfN)$ of affine schemes over $\LGcd$.
\end{Conjecture}

After circulating a version of this preprint, the authors learned that this conjecture has also been independently formulated by Nakajima, and a related discussion will appear in forthcoming work of Ben-Zvi--Sakellaridis--Venkatesh \cite{BZSV2}. 
Moreover, a similar conjectural description of $\mathbf{M}^{\vee} $ for $\mathbf{M}=T^*(G/H)$, the cotangent bundle of an affine spherical variety, was given by Devalapurkar in \cite[Remark 3.6.7]{DevalapurkarKU}, building on the results of \cite[Theorem 3.6.4]{DevalapurkarKU};  
\cref{Recoverability Conjecture} and Devalapurkar's conjecture apply to very few of the same choices of $\mathbf{M}$, but can be interpreted as philosophically the same.  

\subsubsection{Extension of \cref{Recoverability Conjecture}}We now give an informal discussion of an extension of \cref{Recoverability Conjecture}, guided by the heuristic that, in the language of relative Langlands duality, 3-d mirror symmetry is ``relative Langlands duality for the trivial group.''  From this heuristic, one might expect a 3d analogue \begin{equation}\label{eq:3d-mirror} 
    [\{e\} \curvearrowright \M_{H}(G,\bfN)]^{\vee}=[\{e\} \curvearrowright \M_{C}(G,\bfN)]
\end{equation}  of the duality discussed in \cite{NakajimaSDualOfHamiltonianGSpacesandRelativeLanglandsDuality}. Note that we are typically in a situation where one cannot use \cite[Definition (3.6)]{NakajimaSDualOfHamiltonianGSpacesandRelativeLanglandsDuality}, since $\M_{H}(G,\bfN)$ is typically not a vector space, and we discuss below in \cref{rem: grain-of-salt}, even if it is, some care is required to translate \cref{eq:3d-mirror} into correct mathematically precise statements.  The identification \cref{eq:3d-mirror} results from \cite[Definition (3.6)]{NakajimaSDualOfHamiltonianGSpacesandRelativeLanglandsDuality} by reduction as in \cite[Theorem 4.2]{NakajimaSDualOfHamiltonianGSpacesandRelativeLanglandsDuality} using the identification (see, for example, \cite[Example 3.8]{NakajimaSDualOfHamiltonianGSpacesandRelativeLanglandsDuality})
\[[pt \curvearrowleft G]^{\vee} =[\fc{G}\times G^{\vee}\curvearrowleft G^{\vee}].\]
This is a reinterpretation of gauging both sides of the equation by $G$, as discussed in  \cite[\S 5(xi)]{BravermanFinkelbergNakajimaRingObjectsIntheEquivariantDerivedSatakeCategoryArisingFromCoulombBranches}.


\begin{Remark}\label{rem: grain-of-salt}
    How to interpret the heuristic suggested in \cref{eq:3d-mirror} as a precise mathematical statement remains a tricky question.  There are many examples of $G,\bfN$ where the same affine variety appears as $\M_{H}(G,\bfN)\cong \M_{H}(G',\bfN')$ but $\M_{C}(G,\bfN)$ and $ \M_{C}(G',\bfN')$ have different dimensions (and thus are not isomorphic as affine varieties).  For example, if $\bfN=\C^n,G=\Gm^n$, then $\M_{H}(G,\bfN)=0$, but $\M_{C}(G,\bfN)=\mathbb{A}^{2n}$.  It seems likely that we need to consider additional structure on both sides of the duality to make this into a true bijection.  
\end{Remark}

We can naturally interpolate between \cref{Recoverability Conjecture} and \cref{eq:3d-mirror} by considering a partial gauging.  That is, 
let $\biggroup\rhd\normalgroup$ be a group and a normal subgroup such that $\quotientOfHByK=\biggroup/\normalgroup$.  
Let $\mathbf{M}=T^*\bfN$ for $\bfN$ a finite-dimensional $\biggroup$-representation and as above $\M_H(K,\bfN)=\mu_K^{-1}(0)\sslash\normalgroup$ be the complex symplectic quotient by the subgroup $K$, where $\mu_K$ is the moment map for the $K$-action.  The quotient $\M_H(K,\bfN)$ is a Hamiltonian $\quotientOfHByK$-space with induced moment map. 
With this notation, we can extend \cref{Recoverability Conjecture} to suggest that:
\begin{equation}\label{eq:more general conjecture}
    [G\curvearrowright\M_H(K,\bfN) ]^{\vee} \cong [G^{\vee} \curvearrowright G^{\vee} \times^{J_{\quotientOfHByK^{\vee}}}\M_C(\biggroup, \bfN)]
\end{equation}
This includes \cref{eq:3d-mirror} as the special case $G=K$ and \cref{Recoverability Conjecture} as the special case $K=\{1\}$.  
    This statement should be taken with the same grain of salt as discussed in Remark \ref{rem: grain-of-salt}.  Making it precise will likely require considering additional structure on $\M_H(K,\bfN)$ which at present is implicit in how it is written as a quotient.  

\begin{Remark}\label{rem:Gaiotto-Witten}
    One can view \cref{eq:more general conjecture} as a mathematical interpretation of one of the constructions of \cite[\S 4.3]{gaiottoSDuality2009}.  When translating notation, note that our $G^{\vee}$ is Gaiotto and Witten's $G$ (and we only consider the case where Gaiotto and Witten's $H$ is equal to $G$).  Thus, in our notation, they say that the mirror $[G\curvearrowright\M_H(K,\bfN) ]^{\vee}$ (thought of as coupling with the 3-d theory $\mathcal{T}=\operatorname{Hyp}(\mathbf{M})\# K$, which is $\mathfrak{B}^{\vee}$ in the notation of \cite[\S 4.2]{gaiottoSDuality2009}) can be described via an operation Gaiotto and Witten call \lq\lq composite\rq\rq, where we take the 3-d theory $\mathcal{T}\times T[G]$, gauged by the group $G$ acting diagonally on the two theories, and then take the Coulomb branch of that theory (that is, we take the 3-d mirror of the composite theory).   We can see this more clearly by gauging by $G\times G$ first.  The result is the theory $\operatorname{Hyp}(\mathbf{M})\times T[G]$, gauged by the group $H\times G$.  The Coulomb branch of this theory is $\M_C(H,\bfN)\times (\fc{G}\times G^{\vee})$ and we recover the desired composite by restricting to the diagonal subgroup $H\subset H\times G$.  Of course, \cref{Restriction of Groups is Hamiltonian Reduction} suggests the result of this restriction should be \cref{eq:more general conjecture}.  
\end{Remark}

\printbibliography
\end{document}